\date{\today}
\newcommand{\Z}{{\mathbb Z}}
\newcommand{\K}{{\mathbb K}}
\newcommand{\R}{{\mathbb R}}
\newcommand{\C}{{\mathbb C}}
\newcommand{\N}{{\mathbb N}}
\newcommand{\nn}{\nonumber}
\newcommand{\be}{\begin{equation}}
\newcommand{\ee}{\end{equation}}
\newcommand{\ol}{\overline}
\newcommand{\ti}{\tilde}
\DeclareMathOperator{\dist}{dist}
\DeclareMathOperator{\Orb}{Orb}
\newcommand{\eps}{\varepsilon}
\def\lb{\label}
\def\qq{\qquad}
\def\q{\quad}
\def\disp{\displaystyle}
\def\Proof{\noindent{\bf Proof} \quad}
\def\qed{\hfill $\Box$ \smallskip}
\def\rd{{\rm d}}
\def\x#1{(\ref{#1})}
\def\xx#1{{\rm (\ref{#1})}}
\def\ti{\tilde}
\def\qqf{\qquad \forall \ }
\def\oo{\infty}
\def\Ga{\Gamma}
\def\hh{ }
\def\dmu{\,{\rm d}\mu}
\def\Ga{\Gamma}
\def\e{\varepsilon}
\def\bea{\begin{eqnarray}}
\def\eea{\end{eqnarray}}
\def\beaa{\begin{eqnarray*}}
\def\eeaa{\end{eqnarray*}}
\def\hh{\!\!\!\!}
\def\EM{\hh &   &\hh}
\def\EQ{\hh & = & \hh}
\def\LE{\hh & \le & \hh}
\def\AND#1{\hh & #1 & \hh}
\newtheorem{theorem}{Theorem} [section]
\newtheorem{remark}[theorem]{Remark}
\newtheorem{lemma}[theorem]{Lemma}
\newtheorem{definition}[theorem]{Definition}
\def\ce{ }
\def\wds{{\widetilde \dist}}
\numberwithin{equation}{section}
\begin{document}

\title[rotation number with jump discontinuities and $\delta$-interactions]{The rotation number for almost periodic potentials with jump discontinuities and $\delta$-interactions}

\author[D. Damanik]{David Damanik}

\address{Department of Mathematics, Rice University, Houston, TX~77005, USA}
\email{\href{mailto:damanik@rice.edu}{damanik@rice.edu}}

\author[M. Zhang]{Meirong Zhang}

\address{Department of Mathematical Sciences, Tsinghua University,
Beijing 100084, China}
\email{\href{mailto:zhangmr@tsinghua.edu.cn}{zhangmr@tsinghua.edu.cn}}

\author[Z. Zhou]{Zhe Zhou$^*$}

\address{Academy of Mathematics and Systems Science,  Chinese Academy of Sciences,  Beijing 100190, China}
\address{School of Mathematical Sciences, University of Chinese Academy of Sciences, Beijing 100049, China}
\email{\href{mailto:zzhou@amss.ac.cn}{zzhou@amss.ac.cn}}

\thanks{$^*$ Corresponding author}
\thanks{D.\ D.\ was supported by Simons Fellowship $\# 669836$ and NSF grants DMS--1700131 and DMS--2054752}
\thanks{M.\ Z.\ was supported by the National Natural Science Foundation of China (Grant No. 11790273)}
\thanks{Z.\ Z.\ was supported by the National Natural Science Foundation of China (Grant Nos. 12271509, 12090010, 12090014)}

\date{\today}



\begin{abstract}
We consider one-dimensional Schr\"odinger operators with generalized almost periodic potentials with jump discontinuities and $\delta$-interactions. For operators of this kind we introduce a rotation number in the spirit of Johnson and Moser. To do this, we introduce the concept of almost periodicity at a rather general level, and then the almost periodic function with jump discontinuities and $\delta$-interactions as an application.
\end{abstract}

\maketitle

\tableofcontents

\section{Introduction}

The study of the spectral properties of Schr\"odinger operators with almost periodic potentials has been an active research area for roughly half a century. Some of the exciting features that have been exhibited are nowhere dense (i.e., Cantor) spectra and the possibility for any of the standard spectral types (i.e., pure point, singular continuous, and absolutely continuous) to occur within this class of models. This shows that the spectral phenomena are richer than in the classical subclass of periodic potentials, which have been studied much longer.

Our understanding of these issues is much more complete in one space dimension, although some exciting progress has been made in higher dimensions as well; the reader may start exploring the existing theory by consulting, for example, \cite{CL90, D17, KPS20+, PF92} and references therein. The key difference between the one-dimensional case and the higher-dimensional case is the fact that the former admits a generalized eigenvalue equation that is a linear second-order ordinary differential equation, and hence has a two-dimensional solution space for any given energy. This should be contrasted with the fact that in space dimensions at least two, the generalized eigenvalue equation is a partial differential equation and the solution space is infinite-dimensional. This distinction is important because the spectral questions one is interested in can be related to the behavior of the solutions of the generalized eigenvalue equation.

Thus, in the case of one space dimension, it is a worthwhile goal to understand the behavior of these solutions $\psi$, and the two-dimensionality then leads one to study their dependence on the space variable, $x \in \R$, in the plane, $(\psi'(x), \psi(x))^T \in \R^2$. Choosing polar-type coordinates, which are usually referred to as \emph{Pr\"ufer variables}, one can study the growth and the rotation of the vector $(\psi'(x),\psi(x))$ around the origin of $\R^2$ as $x$ grows. The almost periodicity of the potential is well known to yield a uniquely ergodic dynamical system, namely the \emph{hull}, which is the uniform closure of the set of translates and which turns out to be a compact abelian group, together with the $\R$ translation action and the normalized Haar measure. As a consequence, the average amount of rotation per unit step can be defined as the limit of Birkhoff-type averages, which exists uniformly on the hull due to unique ergodicity. The resulting limit is called the \emph{rotation number}, $\rho(E)$, at the energy $E$ in question. It turns out that $\rho(\cdot)$ is constant in a suitable neighborhood of $E$ if and only if $E$ belongs to the complement of the spectrum. In particular, each gap of the spectrum can then be labeled by the constant value $\rho$ takes on it. Additionally, the possible labels that can in principle occur are completely determined by the hull, either via $K$-theory or the Schwartzman asymptotic cycle. This realization is crucial in the study of the topological structure of the spectrum, and in particular when proving that the spectrum is generically nowhere dense. We refer the reader to the landmark papers by Johnson-Moser \cite{JM82} and Johnson \cite{J86} for the definition and study of the rotation number for one-dimensional almost periodic Schr\"odinger operators and its application to gap labeling.

This paper is motivated by the desire to generalize almost periodic potentials by adding suitable almost periodic local point interactions. The discussion above then suggests that an important first step in the analysis of the resulting operators is the definition and study of the rotation number $\rho(E)$ for $E \in \R$. This is precisely what we carry out in the present paper. The application of the rotation number we define here to the spectral analysis of these generalized almost periodic Schr\"odinger operators in one space dimension will be presented in a forthcoming paper. 

Let us describe the models we will be interested in. Following and extending the papers \cite{JM82, ZZ11, DZ}, we consider the Schr\"{o}dinger operator $H_{q,V,\Gamma}$ in $L^2(\R)$ given by
    \be \lb{KP}
    H_{q,V,\Gamma}\psi(x):=-\psi''(x)+\left(q(x)+\sum_{i \in \Z} v_i \delta(x - x_i)\right) \psi(x), \qq x \in \R,
    \ee
where $q(x) \in PC_u(\R)$ is a piecewise continuous almost periodic function, $V=\{v_i\}_{i \in \Z} \in \ell^{\oo}(\Z)$ is an almost periodic bi-sequence, and $\Gamma=\{x_i\}_{i \in \Z} \in L(\Z)$ is an almost periodic point set, while $\delta(x-x_i)$ denotes the Dirac $\delta$-function at $x_i$. In such a setting, $q(x)$ and $\sum_{i \in \Z} v_i \delta(x - x_i)$ can be regarded as the absolutely continuous part and the pure point part of the potential in the measure sense, respectively. Let $E \in \R$. The eigenvalue equation
$$
H_{q,V,\Gamma}\psi= E \psi
$$
can be written as
    \begin{equation} \label{sys}
    \left\lbrace \begin{array}{ll}
    \disp\frac{\rd}{\rd x}\left(
                       \begin{array}{c}
                         \psi' \\
                         \psi \\
                       \end{array}
                     \right) =\left(
                                \begin{array}{cc}
                                  0 & q(x)- E \\
                                  1 & 0 \\
                                \end{array}
                              \right)\left(
                                       \begin{array}{c}
                                         \psi' \\
                                         \psi \\
                                       \end{array}
                                     \right), \qquad &x \in \mathbb{R}\setminus \Gamma,\\
    \left(
      \begin{array}{c}
        \psi'(x_i+) \\
        \psi(x_i+) \\
      \end{array}
    \right)= \left(
               \begin{array}{cc}
                 1 & v_i \\
                 0 & 1 \\
               \end{array}
             \right)\left(
                      \begin{array}{c}
                         \psi'(x_i-) \\
                        \psi(x_i-) \\
                      \end{array}
                    \right), \qquad &x_i \in \Gamma.
    \end{array} \right.
    \end{equation}

The system \x{sys} can be regarded as an impulsive differential equation. There is a large number of works on systems with impulses in which the behavior of solutions is studied, such as periodicity, almost-periodicity, stability and so on; see the monograph by Samoilenko and Perestyuk with a supplement by Trofimchuk \cite{SPT95} and references therein. Different from those works, we will focus on the long time behavior of solutions of \x{sys}, and introduce the rotation number in the spirit of Johnson and Moser \cite{JM82} for \x{KP}.

The paper is organized as follows. We begin in Section~\ref{sec.2} with general considerations centered around the concept of almost periodicity. The presentation is at a rather general level, but as a primary application we have the construction and discussion of the hull associated with our generalized almost periodic potential in mind. The latter application appears in Section~\ref{se-apf}. The next step is to discuss the solutions of \x{sys} from a Pr\"ufer variable perspective that is amenable to unique ergodicity considerations; this is carried out in Section~\ref{sec.4}. Finally, our discussion culminates in Section~\ref{sec.5} in the definition and discussion of the rotation number for the models we consider in this paper.

\bigskip
Throughout this paper, we adopt the following notations:
\begin{itemize}
\smallskip
\item $\N_0:=\N \cup \{0\}$; $\R^+_0:=\R^+\cup\{0\}$;
\smallskip
\item $\mathrm{e}$ denotes the Euler number; $\mathrm{i}$ denotes the imaginary unit that is different from the index $i$;
\smallskip
\item $\K$ denotes either $\R$ or $\C$, depending on the setting. All functions and bi-sequences are $\K$-valued unless stated otherwise;
\smallskip
\item $L(\Z)$: the set of all discrete point sets in the real axis;
\smallskip
\item $\ell^{\oo}(\Z)$: the space of all bounded bi-sequences;
\smallskip
\item $\ell^2(\Z)$: the space of all square integrable bi-sequences;
\smallskip
\item $C(\R)$: the space of all continuous functions;
\smallskip
\item $C_b(\R)$: the subspace of $C(\R)$ consisting of all bounded functions;
\smallskip
\item $C_u(\R)$: the subspace of $C_b(\R)$ consisting of all uniformly continuous functions;
\smallskip
\item $PC(\R)$: the space of all piecewise continuous functions with jump discontinuities at a discrete point set $\Gamma \in L(\Z)$;
\smallskip
\item $PC_b(\R)$: the subspace of $PC(\R)$ consisting of all bounded functions;
\smallskip
\item $PC_u(\R)$: the subspace of $PC_b(\R)$ consisting of all functions that are uniformly continuous on $\R \setminus \Gamma$, i.e., 
    for any $\e >0$, there exists $\delta_\e>0$ such that $|f(x_1)-f(x_2)|<\e$ when $|x_1-x_2|<\delta_\e$ and $x_1, \ x_2$ belong to the same interval from $\R \setminus \Gamma$;
\end{itemize}

\medskip

%
%
%
%
%

\section{Almost Periodicity}\label{sec.2}

In \cite{DZ}, we used a unified approach to introduce almost periodicity in which the isometry of shift actions is crucial. Here we will improve this approach by using a weaker condition than isometry and establish further properties of almost periodicity. Some related ideas can be founded in \cite{Zh}.

%
%
%
%
%

\subsection{Almost Periodic Point}

Let $(Y,\dist)$ be a complete metric space. We consider a $\Z$ action on $Y$ by shifts and denote for $y \in Y$ and $\tau \in \Z$ the corresponding shifted element in $Y$ by $y \cdot \tau$. This shift action satisfies the following conditions:
\begin{itemize}
\smallskip
\item group structure:
    \be \lb{gp-y}
    y \cdot 0=y, \mbox{~and~} y \cdot (\tau_1+\tau_2)= (y \cdot \tau_1) \cdot \tau_2, \qqf y \in Y, \ \tau_1, \ \tau_2 \in \Z;
    \ee
\item equicontinuity:
    \bea
    \EM\mbox{for~any~}\e>0, \mbox{~there~exists~}\delta_\e>0 \mbox{~such~that}\nn \\
    \EM \lb{we-is} \mbox{if~} \dist(y_1,y_2)<\delta_\e, \mbox{~then~} \dist(y_1 \cdot \tau ,y_2 \cdot \tau) < \e \mbox{~for~all~} \tau \in \Z.
    \eea
\end{itemize}

For $y \in Y$, denote the \emph{orbit} of $y$ by
    \[
    \Orb(y):=\{y \cdot \tau: \tau \in \Z\} \subset Y,
    \]
and the \emph{hull} of $y$ by
    \[
    \mathrm{H}(y):= \ol{\Orb(y)}^{(Y,\dist)}.
    \]
A set $A \subset \Z$ is said to be \emph{relatively dense} (with window size $\ell$) if there exists $\ell \in \N$ such that
    \[
    A \cap [a, a + \ell] \neq \emptyset, \qqf a \in \N.
    \]

\begin{definition} \label{un-ap}
We say that $y \in Y$ is almost periodic if one of the following conditions holds:
\begin{itemize}
\smallskip
\item[{\romannumeral1}):] for any $\eps > 0$,
		\(
		\mathrm{P}(y,\eps) := \left\{\tau \in \Z :\dist( y \cdot \tau ,y) < \eps \right\}
		\)
		is relatively dense in $\Z$;
\smallskip		
\item[{\romannumeral2}):] the hull of $y$ is compact;
\smallskip		
\item[{\romannumeral3}):] for any sequence $\{\ti \tau_k \}_{k \in \N}\subset \Z$, one can extract a subsequence $\{ \tau_k \} \subset \{ \ti \tau_k \}$ such that $\{ y \cdot \tau_k\}$ is convergent in $(Y, \dist)$, i.e., $\Orb(y)$ is relatively compact.
\end{itemize}
\end{definition}

The concept of almost periodic points has been introduced for a topological action of a topological semigroup on a locally compact topological space; see \cite{Go47, CD16}. The difference is that here we do not assume that $Y$ is locally compact, but require that the shift action satisfies the equicontinuity. Under conditions \x{gp-y} and \x{we-is}, we can show the equivalence of these definitions.

\begin{theorem} \lb{th-eq}
The conditions {\rm {\romannumeral1}), {\romannumeral2})} and {\rm {\romannumeral3})} in {\rm Definition \ref{un-ap}} are equivalent.
\end{theorem}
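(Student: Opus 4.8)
The plan is to prove the chain of implications $\text{(i)} \Rightarrow \text{(iii)} \Rightarrow \text{(ii)} \Rightarrow \text{(i)}$, exploiting the two structural hypotheses \x{gp-y} and \x{we-is} at the points where the classical Bochner--Bohr arguments would otherwise need local compactness.

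\medskip
\noindent\textbf{(i) $\Rightarrow$ (iii).} Fix $\e>0$ and let $\delta=\delta_{\e/3}$ be the equicontinuity modulus from \x{we-is}. Since $\mathrm{P}(y,\delta)$ is relatively dense, pick a window size $\ell$ with $\mathrm{P}(y,\delta)\cap[a,a+\ell]\neq\emptyset$ for all $a\in\N$ (and, using the group property \x{gp-y} to flip signs, also for the negative direction). Now given an arbitrary sequence $\{\ti\tau_k\}\subset\Z$, for each $k$ choose $p_k\in\mathrm{P}(y,\delta)$ within distance $\ell$ of $\ti\tau_k$, so that $r_k:=\ti\tau_k-p_k$ ranges over the finite set $\{-\ell,\dots,\ell\}$ (or $\{0,\dots,\ell\}$, depending on the sign convention); pass to a subsequence on which $r_k\equiv r$ is constant. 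Then $\dist(y\cdot p_k,\,y)<\e/3$ for all $k$ after using \x{we-is} appropriately. The key manipulation is: for $j,k$,
\[
\dist(y\cdot\ti\tau_j,\,y\cdot\ti\tau_k)=\dist\big((y\cdot p_j)\cdot r,\,(y\cdot p_k)\cdot r\big),
\]
and since $\dist(y\cdot p_j,\,y\cdot p_k)\le \dist(y\cdot p_j,y)+\dist(y,y\cdot p_k)<2\e/3<\delta_{\e'}$ for a suitable bookkeeping choice, equicontinuity of the shift by the fixed amount $r$ gives $\dist(y\cdot\ti\tau_j,\,y\cdot\ti\tau_k)<\e$. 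Hence $\{y\cdot\ti\tau_k\}$ is Cauchy along the subsequence, and completeness of $(Y,\dist)$ yields convergence. (One must be a little careful wiring the $\e/3$'s through \x{we-is} so that the constant-shift-by-$r$ step and the translate-of-$\mathrm{P}$ step both fit; this is the only place requiring attention.)

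\medskip
\noindent\textbf{(iii) $\Rightarrow$ (ii).} By (iii), $\Orb(y)$ is relatively compact, so its closure $\mathrm{H}(y)=\ol{\Orb(y)}$ is compact, which is literally condition (ii). (In a metric space relative compactness of a set is equivalent to compactness of its closure, so there is essentially nothing to do here beyond citing that.)

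\medskip
\noindent\textbf{(ii) $\Rightarrow$ (i).} Suppose $\mathrm{H}(y)$ is compact but $\mathrm{P}(y,\e_0)$ fails to be relatively dense for some $\e_0>0$; we derive a contradiction. Not relatively dense means: for every $\ell\in\N$ there is an interval $[a_\ell,a_\ell+\ell]$ (in $\Z$) containing no element of $\mathrm{P}(y,\e_0)$. Equivalently, there are $n_\ell\in\Z$ such that the shifted window around $n_\ell$ of length $2\ell$ avoids $\mathrm{P}(y,\e_0)$, i.e.\ $\dist(y\cdot(n_\ell+s),\,y)\ge\e_0$ for all $|s|\le\ell$. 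Let $z_\ell:=y\cdot n_\ell\in\Orb(y)$. By compactness of $\mathrm{H}(y)$, after passing to a subsequence $z_{\ell_j}\to z\in\mathrm{H}(y)$. Now fix any $m\in\Z$. For $j$ large enough that $\ell_j\ge|m|$, equicontinuity \x{we-is} gives that $\dist(z_{\ell_j}\cdot m,\,z\cdot m)$ is small once $\dist(z_{\ell_j},z)$ is small; since $z_{\ell_j}\cdot m=y\cdot(n_{\ell_j}+m)$ stays at distance $\ge\e_0$ from $y$, and also $z_{\ell_j}\cdot 0=z_{\ell_j}$ stays at distance $\ge\e_0$ from $y$, one concludes in the limit that $\dist(z\cdot m,\,z\cdot 0')\dots$ — more cleanly: the sequence $\{z_{\ell_j}\}$ is Cauchy, so for $j,k$ large $\dist(z_{\ell_j},z_{\ell_k})<\delta_{\e_0}$, and then \x{we-is} forces $\dist(z_{\ell_j}\cdot s,\,z_{\ell_k}\cdot s)<\e_0$ for all $s$; choosing $s=n_{\ell_k}-n_{\ell_j}$ (legitimate once both windows are long enough to contain it — this is where the growing window length $\ell$ is used) gives $\dist(y\cdot n_{\ell_k},\,y\cdot(2n_{\ell_k}-n_{\ell_j}))<\e_0$, hence translating back, $\dist(y\cdot(n_{\ell_k}-n_{\ell_j}+n_{\ell_j}),\dots)$; undoing the shift by $-n_{\ell_j}$ via equicontinuity lands $n_{\ell_k}-n_{\ell_j}\in\mathrm{P}(y,\text{something}<\e_0)$ while simultaneously $|n_{\ell_k}-n_{\ell_j}|\le \ell_{\ell_j}$, contradicting that the window around $n_{\ell_j}$ avoids $\mathrm{P}(y,\e_0)$.

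\medskip
\noindent\textbf{Main obstacle.} The delicate step is (ii) $\Rightarrow$ (i): turning compactness of the hull into a genuine relatively-dense return-time set. The Cauchy/subsequence argument must be arranged so that the translation amount realizing the return time lies inside one of the ever-growing forbidden windows, which is exactly where one leverages that the window length $\ell$ can be taken arbitrarily large. Equicontinuity \x{we-is} is what lets a single small-$\dist$ estimate on two orbit points be promoted to a uniform-in-$\tau$ estimate on all their shifts — this is the substitute for the usual argument that would, in a locally compact group setting, use uniform continuity of the action — and the group law \x{gp-y} is what lets us freely move base points and pass between positive and negative windows. Once the bookkeeping of the $\e$'s and the window sizes is set up consistently, each individual estimate is routine.
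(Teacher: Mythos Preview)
Your cycle (i)$\Rightarrow$(iii)$\Rightarrow$(ii)$\Rightarrow$(i) differs from the paper, which proves (i)$\Leftrightarrow$(ii) directly via total boundedness and defers (ii)$\Leftrightarrow$(iii) to \cite{DZ}. The paper's (ii)$\Rightarrow$(i) is constructive rather than by contradiction: compactness gives a finite $\delta_\e$-net $\{y\cdot\tau_1,\dots,y\cdot\tau_N\}$ for $\Orb(y)$; for any $a\in\Z$ some $y\cdot\tau_{i_a}$ is $\delta_\e$-close to $y\cdot a$, and shifting by $-a$ via \x{we-is} puts $-a+\tau_{i_a}\in\mathrm{P}(y,\e)\cap[-a-L,-a+L]$ with $L=\max_i|\tau_i|$. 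This sidesteps the window bookkeeping entirely.

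Your proposal has two genuine gaps.

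\emph{In} (i)$\Rightarrow$(iii): the subsequence you extract depends on the single fixed $\e$; along it you only show that every pair $y\cdot\ti\tau_j,\,y\cdot\ti\tau_k$ is within $\e$, which is not yet Cauchy. You must iterate over $\e_n\downarrow 0$, extract nested subsequences, and diagonalize. Equivalently, what your argument really establishes is that $\{y\cdot r:|r|\le\ell\}$ is an $\e$-net for $\Orb(y)$, i.e.\ total boundedness --- and then completeness finishes; but that repackaging is exactly the paper's (i)$\Rightarrow$(ii), not a direct (i)$\Rightarrow$(iii).

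\emph{In} (ii)$\Rightarrow$(i): the contradiction does not close as written. After shifting the Cauchy pair $z_{\ell_j},z_{\ell_k}$ by $s=n_{\ell_k}-n_{\ell_j}$ you get $\dist(y\cdot n_{\ell_k},\,y\cdot(2n_{\ell_k}-n_{\ell_j}))<\e_0$; a further ``undoing'' shift would need this to be below some $\delta$, not $\e_0$, so the chain stalls. Even the more natural shift by $-n_{\ell_j}$ gives $n_{\ell_k}-n_{\ell_j}\in\mathrm{P}(y,\e_0)$, but there is no reason this integer lies in any forbidden window $[n_{\ell_m}-\ell_m,\,n_{\ell_m}+\ell_m]$. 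The correct move is to approximate the limit $z=\lim z_{\ell_j}$ by a \emph{fixed} $y\cdot m$ with $\dist(y\cdot m,z)<\delta_{\e_0}/2$; then for large $j$ one has $\dist(y\cdot m,\,y\cdot n_{\ell_j})<\delta_{\e_0}$, and shifting by $-m$ yields $n_{\ell_j}-m\in\mathrm{P}(y,\e_0)$, while $|m|\le\ell_j$ eventually forces $n_{\ell_j}-m$ into the $j$-th forbidden window --- the needed contradiction.
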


\begin{proof} The argument to show the equivalence between {\rm {\romannumeral2})} and {\rm {\romannumeral3})} is the same as in the proof of \cite[Theorem 2.4]{DZ}. We only consider the equivalence between {\rm {\romannumeral1})} and {\rm {\romannumeral2})}. 

\medskip
\fbox{{\rm {\romannumeral1})} $\Longrightarrow$ {\rm {\romannumeral2})}}\ : By \cite[Therem 3.16.1]{Di69} it suffices to show that $(\mathrm{H}(y), \dist)$ is complete and totally bounded. Since $\mathrm{H}(y)$ is closed in the complete space $(Y,\dist)$, $(\mathrm{H}(y), \dist)$ is complete as well. Hence we need only to prove that $(\mathrm{H}(y), \dist)$ is totally bounded. It suffices to show that $\Orb(y)$ is totally bounded. For any $\e > 0$, there exists $\delta_\e>0$ such that condition \x{we-is} holds. By Definition \ref{un-ap} {\romannumeral1}), for such $\delta_\e$, $\mathrm{P}(y,\delta_\e)$ is relatively dense in $\Z$. Thus there exists $\ell_{\e} \in \N$ such that for any $a \in \Z$, we have
    \[
    \mathrm{P}(y,\delta_\e) \cap [-a,-a+\ell_{\e}] \neq \emptyset.
    \]
Let $-a + b_{a,\e} \in \mathrm{P}(y,\delta_\e) \cap [-a, -a + \ell_{\e}]$, where $b_{a,\e} \in [0, \ell_{\e}] \cap \Z$ depends on the parameters $a$ and $\e$. Since
    \[
    \dist( y \cdot (-a+b_{a,\e}),y)<\delta_\e,
    \]
it follows from \x{gp-y} and \x{we-is} that
    \be \lb{th-eq1}
    \dist( y \cdot b_{a,\e}, y \cdot a)< \e, \qqf a \in \Z.
    \ee
We construct a finite set $A_{\e} \subset \Orb(y)$ by
    \[
    A_{\e} := \{ y \cdot i : i = 0, 1, \cdots ,\ell_{\e}\}.
    \]
By \x{th-eq1}, we obtain that $\Orb(y)$ is totally bounded.

\medskip

\fbox{{\rm {\romannumeral2})} $\Longrightarrow$ {\rm {\romannumeral1})}}\ : Since $\mathrm{\mathrm{H}}(y)$ is compact, by \cite[Therem 3.16.1]{Di69} it follows that $\Orb(y)$ is totally bounded. Again by using $\delta_\e$ in \x{we-is}, we know that there exists a finite subset, denoted by $I_{\e} := \{ \tau_{i} \in \Z : i = 1, 2, \cdots, n_{\e} \}$ such that
    \be \lb{th-eq3}
    \dist (y \cdot a, y \cdot \tau_{i_{a}}) < \delta_\e, \qqf a \in \Z,
    \ee
where $\tau_{i_{a}} \in I_{\e}$ depends on the parameter $a$. It follows from \x{gp-y}, \x{we-is} and \x{th-eq3} that 
    \be \lb{th-eq5}
    \dist(y, y \cdot (-a+\tau_{i_{a}})) < \e, \qqf a \in \Z.
    \ee
Let us denote $L_{\e} := \max_{ 1 \le i \le n_{\e} } | \tau_{i} |$. Then
    \[
    -a - L_{\e} \le -a + \tau_{i_a} \le -a + L_{\e}, \qqf a \in \Z.
    \]
Combining this with \x{th-eq5}, we have
    \[
    \mathrm{P}(y,\eps) \cap [ -a - L_{\e}, -a + L_{\e} ] \neq \emptyset.
    \]
Thus $\mathrm{P}(y,\eps)$ is relatively dense with the choice of $\ell_{\e} = 2 L_{\e}$.
\end{proof}

\begin{remark} \lb{re-Go}
{\rm The equivalence between conditions {\rm {\romannumeral1}) and {\romannumeral3})} in {\rm Definition \ref{un-ap}} is similar to a theorem of Gottschalk. Due to \cite[Theorem 1]{Go46}, that result states that when $Y$ is a uniform space, the total boundedness of $\Orb(y)$ is equivalent to the almost periodicity of $y$ under the equicontinuity condition. Since we have assumed that $Y$ is a metric space, the equivalence between total boundedness and relative compactness holds automatically, and we may characterize the convergence by sequences instead of some uniformity. }
\end{remark}

We say that $y \in Y$ is an \emph{equicontinuous point} if the shift action restricted on $\Orb(y)$ satisfies condition \xx{we-is}. Denote the subset of $Y$ consisting of all equicontinuous points by $Y_{ec}$.

\begin{remark} \lb{re-ec}
{\rm  Recalling the proof of Theorem \ref{th-eq}, we only use the equicontinuity of the shift action on $\Orb(y)$. Once we do not have the equicontinuity condition \x{we-is} on the whole space $Y$, we may define the almost periodicity for $y \in Y_{ec}$ instead of $y \in Y$.
}
\end{remark}

We say that $y \in Y$ is a \emph{complete point} if $(\mathrm{H}(y),\dist)$ is a complete subspace. Denote the subset of $Y$ consisting of all complete points by $Y_{co}$.

\begin{remark} \lb{re-co}
{\rm  Again recalling the proof of Theorem \ref{th-eq}, we only use the completeness of $H(y)$. This means that even if the whole space $Y$ is not complete, we may still define the almost periodicity for $y \in Y_{co}$ instead of $y \in Y$.
}
\end{remark}

Denote the subset of $Y$ consisting of all almost periodic points by $Y_{ap}$. 

\begin{lemma} \lb{ap-com}
$(Y_{ap},\dist)$ is a complete metric space.
\end{lemma}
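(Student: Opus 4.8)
The plan is to show that $Y_{ap}$ is closed in the complete space $(Y,\dist)$; since a closed subset of a complete metric space is complete, this will suffice. So let $\{y_n\}_{n \in \N} \subset Y_{ap}$ with $y_n \to y$ in $(Y,\dist)$, and I must show $y \in Y_{ap}$. I will verify that $y$ satisfies condition (\romannumeral1) of Definition~\ref{un-ap}, namely that $\mathrm{P}(y,\eps)$ is relatively dense in $\Z$ for every $\eps > 0$; by Theorem~\ref{th-eq} this is equivalent to almost periodicity. (One must first check that $y$ is an equicontinuous point so that the notion of almost periodicity even applies to it, in the sense of Remark~\ref{re-ec}; I expect this to follow easily since equicontinuity of the shift action at $y_n$ combined with the convergence $y_n \to y$ transfers the property to $\Orb(y)$ — this is the usual $3\eps$ argument, and I should make sure the equicontinuity constants are uniform, which they are because condition \eqref{we-is} is assumed on all of $Y$ in the main development, or on $Y_{ec}$ which is itself closed.)

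The core is the standard $3\eps$-type argument for relative density. Fix $\eps > 0$. Let $\delta_\eps$ be the equicontinuity modulus from \eqref{we-is} corresponding to $\eps/3$. Pick $n$ large enough that $\dist(y_n, y) < \delta_\eps$ and also $\dist(y_n,y) < \eps/3$. Since $y_n \in Y_{ap}$, the set $\mathrm{P}(y_n, \eps/3)$ is relatively dense, say with window size $\ell$. Now for $\tau \in \mathrm{P}(y_n,\eps/3)$ I estimate
\[
\dist(y \cdot \tau, y) \le \dist(y \cdot \tau, y_n \cdot \tau) + \dist(y_n \cdot \tau, y_n) + \dist(y_n, y).
\]
The middle term is $< \eps/3$ by the choice of $\tau$; the last term is $< \eps/3$; and the first term is $< \eps/3$ by \eqref{we-is} applied with the pair $(y, y_n)$, whose distance is $< \delta_\eps$. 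Hence $\tau \in \mathrm{P}(y,\eps)$, so $\mathrm{P}(y_n,\eps/3) \subset \mathrm{P}(y,\eps)$. Since the left-hand set is relatively dense with window $\ell$, so is $\mathrm{P}(y,\eps)$. As $\eps > 0$ was arbitrary, condition (\romannumeral1) holds for $y$, so $y \in Y_{ap}$ by Theorem~\ref{th-eq}.

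This shows $Y_{ap}$ is closed in $(Y,\dist)$, hence $(Y_{ap},\dist)$ is a complete metric space. I do not anticipate a serious obstacle here; the only point requiring slight care is the bookkeeping noted above, namely confirming that the ambient equicontinuity hypothesis \eqref{we-is} (or its restriction to $Y_{ec}$) guarantees that the limit point $y$ remains an equicontinuous point so that Definition~\ref{un-ap} and Theorem~\ref{th-eq} are genuinely available for $y$; once that is in hand, the inclusion $\mathrm{P}(y_n,\eps/3)\subset\mathrm{P}(y,\eps)$ does all the work.
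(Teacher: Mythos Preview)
Your proof is correct, and your parenthetical worry about equicontinuity is unnecessary: condition \eqref{we-is} is a standing hypothesis on all of $Y$, so Definition~\ref{un-ap} and Theorem~\ref{th-eq} apply to the limit point $y$ automatically.

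Your route, however, is genuinely different from the paper's. You verify condition~{\rm (\romannumeral1)} (Bohr-type relative density) via a direct $3\eps$ triangle inequality, showing $\mathrm{P}(y_n,\eps/3) \subset \mathrm{P}(y,\eps)$ for a single well-chosen $n$. The paper instead verifies condition~{\rm (\romannumeral3)} (Bochner-type relative compactness): given an arbitrary sequence $\{\tilde\tau_k\}\subset\Z$, it performs a diagonal extraction across \emph{all} the $y_i$ simultaneously to obtain a common subsequence $\{\tau_k^k\}$ along which each $\{y_i\cdot\tau_k^k\}_k$ converges, and then uses equicontinuity and a $3\eps$ estimate to show $\{y_0\cdot\tau_k^k\}_k$ is Cauchy. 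Your argument is shorter and more elementary, avoiding diagonalization entirely; the paper's argument is closer in spirit to classical Bochner-style proofs and may be the more natural template in settings where relative compactness is the primary working definition.
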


\begin{proof}
Let $\{y_i\}_{i \in \N} \subset Y_{ap}$ be a Cauchy sequence. Since $Y$ is complete, there exists $y_0 \in Y$ such that
    \be \lb{ap-com1}
    \lim_{i \to +\infty} \dist(y_i,y_0)=0.
    \ee
We assert that $y_0 \in Y_{ap}$. Indeed, since $y_1 \in Y_{ap}$, we know by Definition \ref{un-ap} {\romannumeral3}) that for any sequence $\{\ti \tau_k \}_{k \in \N}\subset \Z$, there exists a subsequence $\{ \tau^1_k \} \subset \{ \ti \tau_k \}$ such that $\{ y_1 \cdot \tau^1_k\}_{k \in \N}$ is convergent in $(Y, \dist)$. For $y_{i+1} \in Y_{ap}$ and $\{\tau^i_k\}$, $i \in \N$, repeating the process we have a subsequence $\{ \tau^{i+1}_k \} \subset \{ \tau^{i}_k \}$ such that $\{ y_{i+1} \cdot \tau^{i+1}_k\}_{k \in \N}$ is convergent in $(Y, \dist)$. It follows from the diagonalization process that there exists a subsequence $\{ \tau^k_k \} \subset \{ \ti \tau_k \}$ such that
    \be \lb{ap-com2}
    \Bigl\{y_{i} \cdot \tau^{k}_k\Bigr\}_{k \in \N} \mbox{~is~convergent~in~}(Y, \dist)\mbox{~for~all~}i \in \N.
    \ee
Replacing $\e$ by $\e/3$ in \x{we-is}, we take a number $\delta_{\e/3}$. By \x{ap-com1} there  exists $i_\e \in \N$ such that
    \[
    \dist(y_i,y_0)< \delta_{\e/3}, \q \mbox{for~}i \ge i_\e.
    \]
This implies that
    \be \lb{ap-com3}
    \dist(y_{i} \cdot \tau ,y_0 \cdot \tau) < \e/3,\q \mbox{for~} i \ge i_\e \mbox{~and~}\tau \in \Z.
    \ee
Let $i=i_\e$ in \x{ap-com2}. We know that for any $\e>0$, there exists $k_\e \in \N$ such that
    \be \lb{ap-com4}
    \dist\Bigl(y_{i_\e} \cdot \tau^{k_1}_{k_1},y_{i_\e} \cdot \tau^{k_2}_{k_2}\Bigr) <\e/3, \q \mbox{for~}k_1,\ k_2 \ge k_\e.
    \ee
Then by \x{ap-com3} and \x{ap-com4} we have
    \beaa
    \EM \dist\Bigl(y_0\cdot \tau^{k_1}_{k_1}, y_0\cdot \tau^{k_2}_{k_2}\Bigr) \\
    \LE \dist\Bigl(y_{i_\e}\cdot \tau^{k_1}_{k_1}, y_0\cdot \tau^{k_1}_{k_1}\Bigr)+\dist\Bigl(y_{i_\e}\cdot \tau^{k_2}_{k_2}, y_0\cdot \tau^{k_2}_{k_2}\Bigr)+\dist\Bigl(y_{i_\e} \cdot \tau^{k_1}_{k_1},y_{i_\e} \cdot \tau^{k_2}_{k_2}\Bigr) \\
    \AND < \e/3+\e/3+\e/3 = \e.
    \eeaa
Thus $\Bigl\{y_0\cdot \tau^{k}_{k}\Bigr\}_{k \in \N}$ is convergent in $(Y, \dist)$. By Definition \ref{un-ap} {\romannumeral3}) we have the assertion.
\end{proof}

\begin{remark} \lb{re-com}
{\rm  Although we may define the almost periodicity for $y \in Y_{ec} \cap Y_{co}$, we do not know whether Lemma \ref{ap-com} is still correct in the absence of condition \x{we-is} and the completeness of $Y$.
}
\end{remark}

%
%
%
%
%

\subsection{Compact Abelian Topological Group}

We focus on the hull of $y \in Y_{ap}$ and equip $\mathrm{H}(y)$ with a group operation as follows. Let
    \be \label{yk}
    y_k:=\lim_{i\to +\oo} y\cdot \tau^k_i  \in \mathrm{H}(y), \qq k=1, \, 2.
    \ee
Then we define the group operation by
    \be \lb{pr-y}
    y_1 \times y_2 := \lim_{i\to +\oo} y\cdot (\tau^1_i+\tau^2_i).
    \ee
The element $y_1 \times y_2$ is well defined. Indeed, for $\delta_{\e/2}$ in \x{we-is}, by \x{yk} there exists a common $i_\e \in \N$ such that
    \be \lb{y12}
    \dist(y \cdot \tau^1_{i_1}, y \cdot \tau^1_{i_2}) < \delta_{\e/2} \mbox{~and~} \dist(y \cdot \tau^2_{i_1}, y \cdot \tau^2_{i_2}) < \delta_{\e/2}, \q \mbox{for~} i_1,\ i_2 \ge i_\e.
    \ee
By \x{gp-y}, \x{we-is} and \x{y12}, we know that when $i_1,\ i_2 \ge i_\e$, we have
    \beaa
    \EM \dist\Bigl(y \cdot (\tau^1_{i_1}+\tau^2_{i_1}), y \cdot (\tau^1_{i_2}+\tau^2_{i_2})\Bigr) \\
    \LE \dist\Bigl((y \cdot \tau^2_{i_1})\cdot \tau^1_{i_1}, (y \cdot \tau^2_{i_2}) \cdot \tau^1_{i_1}\Bigr)+\dist\Bigl((y \cdot \tau^1_{i_1}) \cdot \tau^2_{i_2}, (y \cdot \tau^1_{i_2}) \cdot \tau^2_{i_2}\Bigr) \\
    \AND < \e/2+\e/2 = \e.
    \eeaa
This implies that $\Bigl\{y \cdot (\tau^1_i+\tau^2_i)\Bigr\}_{i \in \N}$ is a Cauchy sequence in the complete space $(Y,\dist)$. Then the limit \x{pr-y} exists. Moreover, $y_1 \times y_2$ is independent of the choice of the sequences $\{\tau^k_i\}_{i \in \N}$ in \x{yk}. In fact, suppose that there exist other sequences $\{\ti \tau^k_i\}_{i \in \N}$ such that
    \be \lb{tiyk}
    y_k=\lim_{i\to +\oo} y\cdot \ti \tau^k_i, \qq k=1,2.
    \ee
Again for $\delta_{\e/2}$ in \x{we-is}, by \x{yk} and \x{tiyk} there exists a common $i_\e \in \N$ such that
    \be \lb{tiy12}
    \dist(y \cdot \tau^k_{i}, y \cdot \ti \tau^k_{i}) < \delta_{\e/2}, \q \mbox{for~} i\ge i_\e,\  k=1,2.
    \ee
By \x{gp-y}, \x{we-is} and \x{tiy12}, we know that when $i \ge i_\e$, one has
    \beaa
    \EM \dist\Bigl(y \cdot (\tau^1_{i}+\tau^2_{i}), y \cdot (\ti \tau^1_{i}+\ti \tau^2_{i})\Bigr) \\
    \LE \dist\Bigl((y \cdot \tau^1_{i})\cdot \tau^2_{i}, (y \cdot \ti \tau^1_{i}) \cdot \tau^2_{i}\Bigr)+\dist\Bigl((y \cdot \tau^2_{i}) \cdot \ti \tau^1_{i},(y \cdot \ti \tau^2_{i}) \cdot \ti\tau^1_{i}\Bigr) \\
    \AND < \e/2+\e/2 = \e.
    \eeaa
Since $\e$ is arbitrary, we have the desired assertion.

These considerations also suggest that the inverse of $y_1$ will be given by
    \be \lb{inv-y}
    (y_1)^{-1}:= \lim_{i\to +\oo} y\cdot (-\tau^1_i).
    \ee
Indeed, using a similar argument as above, we conclude that $(y_1)^{-1}$ is independent of the choice of the sequence $\{\tau^1_i\}_{i \in \N}$, and hence is well defined. Moreover, $(y_1)^{-1}$ is inverse to $y_1$.

Denote the time-one shift $\ti y \cdot 1$ by $T(\ti y)$, where $\ti y=\disp \lim_{i\to +\oo} y\cdot \ti \tau_i \in \mathrm{H}(y)$. Then we have the following results.

\begin{lemma} \lb{atg} For $y \in Y_{ap}$, one has
\begin{itemize}
\smallskip
\item[{\romannumeral1}):] $\mathrm{H}(\ti y)=\mathrm{H}(y)$ for each $\ti y \in \mathrm{H}(y)$;
\smallskip
\item[{\romannumeral2}):] $(\mathrm{H}(y),\times ,^{-1})$ is a compact abelian topological group;
\smallskip
\item[{\romannumeral3}):] $T: \mathrm{H}(y) \to \mathrm{H}(y)$ is uniquely ergodic with the Haar measure, denoted by $\nu_{y}$, being the only invariant measure; and
\smallskip
\item[{\romannumeral4}):] for any continuous function $f: \mathrm{H}(y) \to \K$,
    \be \lb{mv-atg}
    \lim_{n_2-n_1 \to +\infty} \frac{1}{n_2-n_1} \sum_{\tau=n_1}^{n_2-1} f(\ti y \cdot \tau)=\int_{\mathrm{H}(y)} f \rd \nu_{y},
    \ee
uniformly for all $\ti y \in \mathrm{H}(y)$.
\end{itemize}
\end{lemma}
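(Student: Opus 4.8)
The plan is to establish the four assertions of Lemma~\ref{atg} in the order (i), then (ii), then (iii)--(iv) jointly, relying on the construction of $\times$, $^{-1}$, and $T$ already carried out above and on Theorem~\ref{th-eq} and Lemma~\ref{ap-com}.

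\emph{Step 1: the hull is shift-invariant, so $\mathrm{H}(\ti y)=\mathrm{H}(y)$.} Given $\ti y=\lim_i y\cdot\ti\tau_i\in\mathrm{H}(y)$, the inclusion $\Orb(\ti y)\subset\mathrm{H}(y)$ follows because $\ti y\cdot\tau=\lim_i y\cdot(\ti\tau_i+\tau)$ by \x{gp-y} and the equicontinuity \x{we-is} (the same Cauchy-sequence argument used to make $\times$ well defined), and $\mathrm{H}(y)$ is closed; hence $\mathrm{H}(\ti y)\subset\mathrm{H}(y)$. For the reverse inclusion, note $\ti y$ is itself almost periodic: by Theorem~\ref{th-eq} it suffices to check relative compactness of $\Orb(\ti y)$, which is immediate since $\Orb(\ti y)\subset\mathrm{H}(y)$ and $\mathrm{H}(y)$ is compact. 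Then $y=\ti y\times(\ti y)^{-1}\cdots$— more directly, one shows $y\in\mathrm{H}(\ti y)$ by approximating $y$ by $\ti y\cdot(-\ti\tau_i)$-type translates: since $\dist(y\cdot\ti\tau_i,\ti y)\to 0$, applying \x{we-is} with the shift $-\ti\tau_i$ gives $\dist(y,\ti y\cdot(-\ti\tau_i))\to 0$ along a subsequence making the relevant limit exist, so $y\in\mathrm{H}(\ti y)$, whence $\mathrm{H}(y)\subset\mathrm{H}(\ti y)$.

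\emph{Step 2: $(\mathrm{H}(y),\times,^{-1})$ is a compact abelian topological group.} The group axioms are routine from \x{pr-y} and \x{inv-y}: associativity and commutativity are inherited from $(\Z,+)$ by passing to limits (choosing common defining sequences and using \x{we-is}), the identity is $y=y\cdot 0$, and \x{inv-y} gives inverses. Compactness is exactly Definition~\ref{un-ap}~(ii). It remains to verify joint continuity of $\times$ and continuity of $^{-1}$; here the equicontinuity \x{we-is} is the crucial tool. For $^{-1}$: if $\dist(z_n,z)\to 0$ with $z_n=\lim_i y\cdot\sigma^n_i$, $z=\lim_i y\cdot\sigma_i$, one picks indices so that the defining translates are close and applies \x{we-is} with the negated shifts to get $\dist((z_n)^{-1},z^{-1})\to 0$. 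For $\times$: given $z_n\to z$ and $w_n\to w$, a triangle-inequality estimate splitting $\dist(z_n\times w_n, z\times w)$ through $z_n\times w$ and using \x{we-is} twice (shifting by the translates defining $w$ and then by those defining $z_n$) yields convergence. Since a compact Hausdorff group with separately (indeed jointly) continuous operations is a topological group, (ii) follows.

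\emph{Step 3: unique ergodicity of $T$ and the uniform Birkhoff theorem.} The map $T(\ti y)=\ti y\cdot 1=\ti y\times y_1$ where $y_1=y\cdot 1$, so $T$ is translation by a fixed element $g_0:=y\cdot 1$ on the compact abelian group $\mathrm{H}(y)$. A minimal translation on a compact group is uniquely ergodic with the Haar measure as the unique invariant measure; minimality here is precisely Step~1 ($\overline{\{g_0^{\,n}\}}=\mathrm{H}(y\cdot 1)=\mathrm{H}(y)$, since the orbit of $y$ under $T$ is $\Orb(y)$). This gives (iii) with $\nu_y$ the normalized Haar measure. For (iv), unique ergodicity is equivalent to the statement that the Birkhoff averages $\frac1N\sum_{\tau=0}^{N-1} f(\ti y\cdot(a+\tau))$ converge to $\int f\,\rd\nu_y$ uniformly in $\ti y$ (and in the base point $a$, which by the group structure can be absorbed into $\ti y$); rewriting the average over $[n_1,n_2)$ as an average of length $n_2-n_1$ starting at $n_1$ gives the form \x{mv-atg}. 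The uniformity in $\ti y$ is the standard consequence of unique ergodicity for continuous $f$ on a compact space, proved by a contradiction-and-compactness argument (if uniformity failed, a weak-$*$ limit of empirical measures would be an invariant measure different from $\nu_y$).

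The main obstacle is Step~2, specifically the joint continuity of the group operation: unlike the classical almost periodic setting where one works with a genuine isometry, here we only have the weaker equicontinuity \x{we-is}, so every continuity estimate must be funneled through \x{we-is} applied to the correct family of shifts, and one must be careful that the "common index" choices (as in \x{y12}, \x{tiy12}) can be made uniformly enough to conclude continuity rather than mere well-definedness. Once joint continuity is in hand, Steps 1 and 3 are essentially bookkeeping plus invocation of the classical unique-ergodicity-of-minimal-group-translations theorem.
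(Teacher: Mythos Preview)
Your proposal is correct and follows essentially the same approach as the paper: part (i) via equicontinuity applied to shifts $\tau$ and $-\ti\tau_i$, part (ii) by first establishing the translation-equicontinuity claim $\dist(y_1,y_2)<\delta_\e\Rightarrow\dist(y_1\times y_3,y_2\times y_3)<\e$ (your ``shift by translates defining $w$'' is exactly this) and then a triangle-inequality split, and parts (iii)--(iv) by recognizing $T$ as a minimal rotation on a compact abelian group and invoking the standard unique-ergodicity theorem. One cosmetic remark: in Step~1 no subsequence is needed to conclude $y\in\mathrm{H}(\ti y)$, since \x{we-is} applied with shift $-\ti\tau_i$ directly gives $\dist(\ti y\cdot(-\ti\tau_i),y)<\e$ for all large $i$.
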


\begin{proof}
\fbox{{\rm {\romannumeral1})}}\ : For $\delta_{\e}$ in \x{we-is}, there exists $i_\e \in \N$ such that
    \be \lb{atg0}
    \dist(y \cdot \ti\tau_i, \ti y)< \delta_\e, \q \mbox{for~} i \ge i_\e.
    \ee
For all $\tau \in \Z$, by \x{gp-y} and \x{we-is} we have
    \be \lb{atg01}
    \dist(y \cdot (\ti\tau_i+\tau), \ti y \cdot \tau)<\e, \q \mbox{for~} i \ge i_\e.
    \ee
This implies that $\Orb(\ti y) \subset \mathrm{H}(y)$. Because $\mathrm{H}(y)$ is closed, we obtain $\mathrm{H}(\ti y) \subset \mathrm{H}(y)$. Conversely, from \x{gp-y}, \x{we-is} and \x{atg0} we get
    \[
    \dist(\ti y \cdot (-\ti\tau_i), y)< \e, \q \mbox{for~} i \ge i_\e.
    \]
This implies that $y \in \mathrm{H}(\ti y)$. Using the argument above, we obtain $\mathrm{H}(y) \subset \mathrm{H}(\ti y)$. The proof of {\romannumeral1}) is completed.

\fbox{{\rm {\romannumeral2})}}\ : It is obvious by \x{pr-y} that $\mathrm{H}(y)$ is an abelian group with the identity element $y$. One needs to show that the group operations $\times$ and $^{-1}$ are continuous. We make the following claim:
    \bea
    \EM\mbox{for~any~}\e>0, \mbox{~there~exists~}\delta_\e>0 \mbox{~such~that}\nn \\
    \EM \lb{eq-hu} \mbox{if~} \dist(y_1,y_2)<\delta_\e, \mbox{~then~}\dist(y_1 \times y_3 ,y_2 \times y_3) < \e \mbox{~for~} y_k \in \mathrm{H}(y).
    \eea
Indeed, by \x{yk} and the continuity of metric, we have
    \be \lb{atg1}
    \lim_{i \to +\infty} \dist(y \cdot \tau^1_i, y \cdot \tau^2_i)= \dist(y_1,y_2).
    \ee
Replacing $\e$ by $\e/2$ in \x{we-is}, we take $\delta_{\e/2}$. When $\dist(y_1,y_2)<\delta_{\e/2}$, by \x{atg1} there exists $i_\e \in \N$ such that
    \[
    \dist(y \cdot \tau^1_i, y \cdot \tau^2_i) < \delta_{\e/2}, \q \mbox{for~} i \ge i_\e.
    \]
This implies that
    \[
    \dist\Bigl(y \cdot (\tau^1_i+\tau^3_i), y \cdot (\tau^2_i+\tau^3_i)\Bigr) < \e/2.
    \]
By \x{pr-y} and the continuity of the metric, the claim \x{eq-hu} is deduced and $\delta_{\e/2}$ is the desired number.

Let $y_k, \ti y_k \in \mathrm{H}(y), \ k=1,2$. We take $\delta_{\e/2}$ in \x{eq-hu}. When $\dist(y_1,\ti y_1)<\delta_{\e/2}$ and  $\dist(y_2, \ti y_2)<\delta_{\e/2}$, it follows from \x{eq-hu} that
    \beaa
    \EM \dist(\ti y_1 \times \ti y_2,y_1 \times y_2) \\
    \LE \dist(\ti y_1 \times \ti y_2,y_1 \times \ti y_2)+\dist(y_1 \times \ti y_2,y_1 \times y_2)\\
    \AND < \e/2+\e/2 = \e.
    \eeaa
This proves the continuity of the operation $\times$. Similarly, we can obtain the continuity of the operation $^{-1}$. The proof of {\romannumeral2}) is completed.

\fbox{{\rm {\romannumeral3})}}\ :  By \x{gp-y} and \x{atg01} we have
    \[
    T^{\tau}(\ti y)=\ti y \cdot \tau= \lim_{i \to +\infty} y \cdot(\ti \tau_i+\tau), \qqf \tau \in \Z.
    \]
By \x{pr-y} we have $T(\ti y) =(y \cdot 1)\times \ti y$. This implies that $T$ is a rotation on the compact abelian topological group $\mathrm{H}(y)$. Furthermore, due to \x{gp-y} and {\romannumeral1}), $T$ is a minimal rotation. Thus the proof of {\romannumeral3}) is completed by \cite[Theorem 6.20]{Wa82}.

\fbox{{\rm {\romannumeral4})}}\ : Applying a standard consequence of unique ergodicity, \cite[Theorem 6.19]{Wa82}, to $T: \ti y \mapsto \ti y \cdot 1$, the statement {\romannumeral4}) follows readily.
\end{proof}

\begin{definition} \label{un-mv}
For any $y \in Y_{ap}$ and any continuous function $f: \mathrm{H}(y) \to \K$, we call $\int_{\mathrm{H}(y)} f \rd \nu_{y}$ the mean value of $y$ with respect to $f$, and denote it by $\mathrm{M}_f(y)$.
\end{definition}

\begin{remark} \lb{atg-co}
{\rm  Similar to Remark \ref{re-co}, Lemma \ref{atg} may be established for $y \in Y_{co}$ in the absence of completeness of $Y$.
}
\end{remark}

We consider the case $Y:=\ell^{\oo}(\Z)$ and define a metric $\ell^{\oo}(\Z) \times \ell^{\oo}(\Z) \to \R^+_0$ by
    \be \lb{dist-v}
    \dist(V_1,V_2):=\|V_1-V_2\|_{\infty}=\sup_{i \in \Z}|v^1_i-v^2_i|,
    \ee
where $V_k:=\{v^k_i\}_{i \in \Z} \in \ell^{\oo}(\Z), \ k=1,2$. The following is well known.

\begin{lemma} \lb{bs-com}
$(\ell^{\oo}(\Z),\dist)$ is a complete metric space.
\end{lemma}

For $V = \{ v_i \}_{i \in \Z} \in \ell^{\oo}(\Z)$ and $\tau \in \Z$, the shift of $V$ is defined by
    \be \lb{sft-v}
    V \cdot \tau:= \{ v_{i + \tau}\}_{i \in \Z}.
    \ee
Obviously for $V_k \in \ell^{\oo}(\Z),\ k=1,2$, we have
    \be \lb{bs-is}
    \dist(V_1 \cdot \tau, V_2 \cdot \tau)=\dist(V_1,V_2), \qqf \tau \in \Z.
    \ee
This means that $(\ell^{\oo}(\Z),\dist)$ satisfies the isometry condition. Then Definition \ref{un-ap} defines \emph{almost periodic bi-sequences}. We denote by $\ell_{ap}(\Z)$ the space of all almost periodic bi-sequences. By Lemma \ref{ap-com}, we know that $(\ell_{ap}(\Z),\dist)$ is a complete space.

Introduce the function $f_0: \ell_{ap}(\Z) \to \K$ by
    \[
    f_0(V):=v_0, \q \mbox{for~} V=\{v_i\}_{i \in \Z} \in \ell_{ap}(\Z).
    \]
It is easy to see that $f_0$ is continuous. Then by Definition \ref{un-mv}, we have the following result.

\begin{lemma} \lb{lap-mv}
Let $V=\{v_i\}_{i \in \Z} \in \ell_{ap}(\Z)$. Then the limit
    \[
    \mathrm{M}_{f_0}(V)=\lim_{n_2-n_1 \to +\oo} \frac{1}{n_2-n_1} \sum_{\tau=n_1}^{n_2-1}f_0(V \cdot \tau) =\lim_{n_2-n_1 \to +\oo} \frac{1}{n_2-n_1} \sum_{\tau=n_1}^{n_2-1}v_\tau
    \]
exists. We call it the mean value of $V$ and denote it by $\mathrm{M}(V)$.
\end{lemma}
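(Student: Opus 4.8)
The plan is to recognize this statement as a direct specialization of Lemma~\ref{atg}{\romannumeral4}) to the concrete setting $Y=\ell^{\oo}(\Z)$, $y=V$, and $f=f_0$. First I would record that all standing hypotheses of Section~\ref{sec.2} are met for $Y=\ell^{\oo}(\Z)$ with the metric \x{dist-v} and the shift \x{sft-v}: the group structure \x{gp-y} is immediate from \x{sft-v}, the equicontinuity \x{we-is} holds because in fact the stronger isometry property \x{bs-is} is valid, and $(\ell^{\oo}(\Z),\dist)$ is complete by Lemma~\ref{bs-com}. Hence the machinery of the subsection applies verbatim, and since $V\in\ell_{ap}(\Z)=Y_{ap}$, the hull $\mathrm{H}(V)$ carries the structure of a compact abelian topological group with Haar measure $\nu_V$, and the uniform ergodic averaging identity \x{mv-atg} is available.

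Next I would verify the two small points needed to invoke \x{mv-atg} with the particular function $f_0$. The first is that $f_0$ is continuous on the relevant domain: it is $1$-Lipschitz with respect to $\dist$, since $|f_0(V_1)-f_0(V_2)|=|v^1_0-v^2_0|\le\sup_{i\in\Z}|v^1_i-v^2_i|=\dist(V_1,V_2)$, as already noted in the text. The second is that the hull $\mathrm{H}(V)$ is actually contained in $\ell_{ap}(\Z)$, so that the restriction $f_0|_{\mathrm{H}(V)}\colon\mathrm{H}(V)\to\K$ is a well-defined continuous function to which Lemma~\ref{atg}{\romannumeral4}) applies. This follows from Lemma~\ref{atg}{\romannumeral1}): for every $\ti y\in\mathrm{H}(V)$ one has $\mathrm{H}(\ti y)=\mathrm{H}(V)$, which is compact, so $\ti y\in Y_{ap}=\ell_{ap}(\Z)$ by the equivalence Definition~\ref{un-ap}{\romannumeral2}).

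Finally I would apply \x{mv-atg} to $f=f_0|_{\mathrm{H}(V)}$ with the choice $\ti y=V$. By the definition of the shift \x{sft-v} one has $V\cdot\tau=\{v_{i+\tau}\}_{i\in\Z}$, hence $f_0(V\cdot\tau)=v_\tau$ for every $\tau\in\Z$. Therefore \x{mv-atg} gives
    \[
    \lim_{n_2-n_1\to+\oo}\frac{1}{n_2-n_1}\sum_{\tau=n_1}^{n_2-1}v_\tau
    =\lim_{n_2-n_1\to+\oo}\frac{1}{n_2-n_1}\sum_{\tau=n_1}^{n_2-1}f_0(V\cdot\tau)
    =\int_{\mathrm{H}(V)}f_0\,\rd\nu_{V}=\mathrm{M}_{f_0}(V),
    \]
with the limit existing (indeed uniformly over the base point, though only the single point $V$ is needed here). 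This is exactly the assertion, so one sets $\mathrm{M}(V):=\mathrm{M}_{f_0}(V)$.

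\textbf{Main obstacle.} There is essentially no substantive obstacle: all the real work has already been done in Lemma~\ref{atg} (compactness of the hull, the abelian group structure, minimality of the rotation $T$, and unique ergodicity). The only care required is the bookkeeping of the previous paragraph --- checking that $f_0$ restricted to $\mathrm{H}(V)$ is genuinely continuous for the sup-metric and that $\mathrm{H}(V)$ lies in the domain $\ell_{ap}(\Z)$ of $f_0$ --- together with the trivial but essential identification $f_0(V\cdot\tau)=v_\tau$ that turns the abstract Birkhoff average into the concrete Cesàro average of the bi-sequence.
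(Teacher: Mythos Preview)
Your proposal is correct and follows exactly the route the paper intends: the paper does not give a separate proof of this lemma but simply notes that $f_0$ is continuous and invokes Definition~\ref{un-mv} (i.e., Lemma~\ref{atg}{\romannumeral4})), which is precisely what you spell out. Your additional checks (that $\mathrm{H}(V)\subset\ell_{ap}(\Z)$, that the hypotheses \x{gp-y}, \x{we-is} and completeness hold) are correct and merely make explicit what the paper leaves implicit.
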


\medskip
%
%
%
%
%

\section{Almost Periodic Functions with Jump Discontinuities and $\delta$-Interactions} \lb{se-apf}

The concept of an almost periodic function is well known; see the monographs \cite{Be54, Bo56, LZ82}. In this section we introduce almost periodic functions with jump discontinuities and $\delta$-interactions denoted by
    \be \lb{pcde}
    f(x)+\sum_{i \in \Z} v_i \delta(x - x_i), 
    \ee
where $\Gamma=\{x_i\}_{i \in \Z} \in L(\Z)$, $f(x) \in PC_u(\R)$ is a function with jump discontinuities at points of $\Gamma$ and $V=\{v_i\}_{i \in \Z} \in \ell^{\oo}(\Z)$, while $\delta(x-x_i)$ is the Dirac $\delta$-function at $x_i$. In \cite{SPT95} the authors introduced piecewise continuous almost periodic (for short, p.c.a.p.) functions with first kind of discontinuities at the (possible) points of $\Gamma=\{x_i\}_{i \in \Z} \subset \R$ in which the family of sequences $\{x_{i+j}-x_i\}_{i \in \Z}$ is \emph{equipotentially almost periodic} for all $j\in \Z$. Under the separation condition $\inf_{i \in \Z}(x_{i+1}-x_i) > 0$ such a point set is a so-called Wexler sequence; see \cite[Definition 2.11]{QY19}. If $\R$ is regarded as a locally compact abelian group, this is a \emph{modulated lattice} as introduced in \cite[Definition 2]{LLRSS20} because of \cite[p.377, Corollary 5]{SPT95}. However we do not intend to repeat the statement in \cite{SPT95}, and choose a somewhat different way to introduce \emph{almost periodic functions with jump discontinuities and $\delta$-interactions}. The first difference is to take into account the effect of $\delta$-interactions. The second one is to restrict the locations of the (possible) discontinuity points and $\delta$-interactions to \emph{almost periodic point sets}, which have already been defined in \cite{ZZ11,DZ}. The third one is to choose a discrete framework to define the almost periodic functions with $\delta$-interactions from the point of view of topology, where a base for some uniformity will be constructed and the validity of compactness statements will be used; see \cite{Ke55}. 

%
%
%
%
%

\subsection{Point Sets}

We restrict to point sets in one dimensional case, and then recall the notion of almost periodic point sets defined in \cite{ZZ11, DZ}. It should be mentioned that in order to describe Delone dynamical systems, Lenz and Stollmann \cite{LS05} introduced the notion of almost periodic point sets in $\R^d$. For the general case that is defined on locally compact abelian groups, see \cite{KL13, LS19}.

We assume that $\Gamma =\{x_i\}_{i \in \Z} \in L(\Z)$ satisfies the following requirements
    \[
    x_0=0 \qquad \mbox{and}\qquad 0 < \inf_{i \in \Z} \Delta x_i \le \sup_{i \in \Z} \Delta x_i < \oo, \, \mbox{where } \Delta x_i := x_i - x_{i-1}.
    \]
Note that the first requirement is natural because we may translate point sets such that the zero point is included. The second one is an indispensable condition in order to introduce the recurrence property of point sets encoded in the notion of almost periodicity, because we need exclude point sets with finite limit points. Denote by $L_0(\Z)$ the space of such all point sets in $\R$. It can be equipped with the metric 
    \be \label{dist-11}
    \dist (\Ga_1,\Ga_2) := \max \left\{ \wds(\Ga_1, \Ga_2), \ \wds(\Ga_2, \Ga_1) \right\},
    \ee
where
    \[
    \wds(\Ga_1,\Ga_2) := \disp \sup_{i \in \Z} \min_{j \in \Z} |x^1_i -  x^2_j|, \quad \mbox{for } \Ga_k = \{ x^k_i \}_{i \in \Z} \in L_0(\Z), \ k=1,2.
    \]
The metric $\dist(\cdot,\cdot)$ may be regarded as the Hausdorff metric. Note that the space $(L_0(\Z),\dist)$ is not complete. However, given any $0 < m \le M < \oo$, the set
    \be \lb{lmm}
    L_{m,M}(\Z):= \Bigl\{ \Ga = \{ x_i \}_{i \in \Z} \in L_0(\Z): \Delta x_i \in [m,M], \ \forall\ {i \in \Z} \Bigr\}
    \ee
is a closed subset of $L_0(\Z)$ and it is obvious that
    \[
    L_0(\Z) = \bigcup_{0 < m \le M < \oo} L_{m,M}(\Z).
    \]
Furthermore we have

\begin{lemma} \cite{ZZ11,DZ} \lb{la-com}
$(L_{m,M}(\Z),\dist)$ is a complete space.
\end{lemma}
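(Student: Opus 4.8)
The plan is to take an arbitrary Cauchy sequence $\{\Gamma_n\}_{n \in \N} \subset L_{m,M}(\Z)$, write $\Gamma_n = \{x_i^n\}_{i \in \Z}$ with $x_0^n = 0$ and $\Delta x_i^n \in [m,M]$, construct a candidate limit $\Gamma = \{x_i\}_{i \in \Z}$ coordinate by coordinate, and then verify both that $\Gamma \in L_{m,M}(\Z)$ and that $\dist(\Gamma_n,\Gamma) \to 0$. Note that the separate remark that $L_{m,M}(\Z)$ is closed in $L_0(\Z)$ is not enough here, since $L_0(\Z)$ itself is not complete; a direct argument is needed.

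The first and essentially only nontrivial point is that $\dist$ is a Hausdorff-type metric and so does not a priori respect the indexing of points. I would handle this by exploiting the uniform lower bound $\Delta x_i^n \ge m$. If $\dist(\Gamma_n,\Gamma_{n'}) < m/2$, then for each $i$ there is, since $\wds(\Gamma_n,\Gamma_{n'}) < m/2$, some $j$ with $|x_i^n - x_j^{n'}| < m/2$, and this $j$ is \emph{unique} because distinct points of $\Gamma_{n'}$ are at distance $\ge m$; moreover for $j' \ne i$ one has $|x_i^n - x_{j'}^{n'}| \ge m - m/2 = m/2$, so $j=i$ is also the minimizer in $\min_{j}|x_i^n - x_j^{n'}|$. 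A symmetric argument produces the reverse matching and shows the two are mutually inverse, so together they give an order-preserving bijection of $\Z$, i.e.\ a shift $i \mapsto i + c$; the normalization $x_0^n = x_0^{n'} = 0$ together with the separation forces $c = 0$. Consequently, once $n,n'$ are large enough that $\dist(\Gamma_n,\Gamma_{n'}) < m/2$, one has $\dist(\Gamma_n,\Gamma_{n'}) = \sup_{i \in \Z}|x_i^n - x_i^{n'}|$; in particular, for each fixed $i$ the sequence $\{x_i^n\}_n$ is Cauchy in $\R$, and the Cauchy estimate is uniform in $i$.

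With this in hand the rest is routine bookkeeping. For each $i$ set $x_i := \lim_{n\to\infty} x_i^n$; passing to the limit $n' \to \infty$ in the uniform estimate gives $\sup_{i \in \Z}|x_i^n - x_i| \to 0$ as $n \to \infty$. Then $x_0 = 0$, and $\Delta x_i = \lim_n \Delta x_i^n \in [m,M]$ since $[m,M]$ is closed; in particular the $x_i$ are strictly increasing with $x_i \to \pm\infty$ as $i \to \pm\infty$, so $\Gamma := \{x_i\}_{i \in \Z}$ is a legitimate element of $L_{m,M}(\Z)$. Finally $\wds(\Gamma_n,\Gamma) \le \sup_{i}|x_i^n - x_i|$ and $\wds(\Gamma,\Gamma_n) \le \sup_{i}|x_i - x_i^n|$, so $\dist(\Gamma_n,\Gamma) \le \sup_{i}|x_i^n - x_i| \to 0$, which is the desired convergence.

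I expect the index-matching step to be the main obstacle, and everything following it to be straightforward. The care points there are: apply the matching with a fixed threshold ($m/2$), using the Cauchy property to make it valid for all large $n,n'$ at once; keep all estimates uniform in $i$; and use the anchoring $x_0^n = 0$ to eliminate the ambiguous shift $c$.
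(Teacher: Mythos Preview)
Your argument is correct. The paper does not actually supply a proof of this lemma; it is quoted from \cite{ZZ11,DZ}. Your index-matching step is exactly the content of Lemma~\ref{dist-l}\,{\rm(ii)} (equation~\eqref{dist-s}), which the paper also quotes without proof, and once that identity is in hand your construction of the coordinatewise limit and verification that it lies in $L_{m,M}(\Z)$ are the standard steps (and are what Lemma~\ref{con-l} records). So your write-up is in effect a self-contained proof of both Lemma~\ref{la-com} and Lemma~\ref{dist-l}\,{\rm(ii)}, following the same route as the cited references.
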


\begin{lemma} \cite{ZZ11,DZ} \lb{dist-l} For $\Ga_k = \{ x^k_i \} \in L_{m,M}(\Z), k=1,2$, we have:

\begin{itemize}
\smallskip
\item[{\romannumeral1}):] $\dist(\Ga_1, \Ga_2) \le M/2$; and
\smallskip
\item[{\romannumeral2}):] if $\dist(\Ga_1,\Ga_2) < m/2$, then
    \be \lb{dist-s}
    \dist(\Ga_1,\Ga_2) = \sup_{i \in \Z} |x^1_i - x^2_{i}|.
    \ee
\end{itemize}

\end{lemma}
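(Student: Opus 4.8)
The plan is to prove both parts directly from the explicit formulas for the metric and the definition of $L_{m,M}(\Z)$, with part (ii) being the substantive claim and part (i) an easy consequence of it (or of a direct estimate).

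\textbf{Part (i).}\quad First I would observe that $\dist(\Ga_1,\Ga_2) = \max\{\wds(\Ga_1,\Ga_2),\wds(\Ga_2,\Ga_1)\}$, so by symmetry it suffices to bound $\wds(\Ga_1,\Ga_2) = \sup_{i}\min_{j}|x_i^1 - x_j^2|$ by $M/2$. Fix $i$. Since $x_0^1 = x_0^2 = 0$ and the gaps $\Delta x_j^2$ lie in $[m,M]$, the set $\{x_j^2\}_{j\in\Z}$ has the property that consecutive points are at distance at most $M$; hence for any real number $t$ (in particular $t = x_i^1$) there is some $j$ with $|t - x_j^2|\le M/2$ — just take the nearest point of $\Ga_2$ below $t$ and the nearest one above $t$; their separation is $\le M$, so one of them is within $M/2$ of $t$. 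Therefore $\min_j|x_i^1 - x_j^2|\le M/2$ for every $i$, giving $\wds(\Ga_1,\Ga_2)\le M/2$, and the same for the other term.

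\textbf{Part (ii).}\quad Assume $\dist(\Ga_1,\Ga_2) < m/2$. The goal is to show that the ``nearest point'' in the definition of $\wds$ can always be taken to be the point with the \emph{same index}, i.e.\ $\min_j|x_i^1 - x_j^2| = |x_i^1 - x_i^2|$ for every $i$, whence $\wds(\Ga_1,\Ga_2) = \sup_i|x_i^1 - x_i^2|$ and likewise $\wds(\Ga_2,\Ga_1) = \sup_i|x_i^2 - x_i^1|$, and the two suprema coincide. The key step is a matching/index-alignment argument. For each $i$ let $\sigma(i)$ be an index realizing $\min_j|x_i^1 - x_j^2|$, so $|x_i^1 - x_{\sigma(i)}^2| < m/2$ (using $\wds(\Ga_1,\Ga_2)\le\dist(\Ga_1,\Ga_2)<m/2$). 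I claim $\sigma$ is the identity. First, $\sigma$ is strictly increasing: if $i < i'$ then $x_i^1 < x_{i'}^1$ with $x_{i'}^1 - x_i^1 \ge m$, while if we had $\sigma(i')\le\sigma(i)$ then $x_{\sigma(i)}^2 - x_{\sigma(i')}^2 \ge 0$, and combining the two $m/2$-bounds would force $x_{i'}^1 - x_i^1 < m$ (indeed $x_{i'}^1 < x_{\sigma(i')}^2 + m/2 \le x_{\sigma(i)}^2 + m/2 < x_i^1 + m$), a contradiction; so $\sigma$ is strictly increasing, hence $\sigma(i+1)\ge\sigma(i)+1$. Next, $\sigma$ is surjective onto $\Z$ (no index of $\Ga_2$ is skipped): running the symmetric argument with the roles of $\Ga_1,\Ga_2$ reversed produces a strictly increasing $\tau$ with $|x_j^2 - x_{\tau(j)}^1| < m/2$; if $\sigma$ skipped some index $j_0$, then between $\sigma(i)$ and $\sigma(i+1) \ge j_0+1 \ge \sigma(i)+2$ the point $x_{j_0}^2$ would be within $m/2$ of $x_i^1$ and within $m/2$ of $x_{i+1}^1$ simultaneously, forcing $x_{i+1}^1 - x_i^1 < m$, again a contradiction. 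So $\sigma$ is a strictly increasing bijection of $\Z$, hence $\sigma(i) = i + c$ for a constant $c\in\Z$; finally $x_0^1 = x_0^2 = 0$ and $|x_0^1 - x_{\sigma(0)}^2| < m/2 < m$ together with the separation $\ge m$ between distinct points of $\Ga_2$ force $\sigma(0) = 0$, i.e.\ $c = 0$. This gives $\sigma = \mathrm{id}$ and the claimed formula.

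\textbf{Main obstacle.}\quad The only delicate point is the index-alignment argument in part (ii): one must carefully combine the uniform separation lower bound $\Delta x_i \ge m$ for \emph{both} point sets with the proximity bound $< m/2$ to rule out any crossing or skipping of indices, and then pin down the global index shift using $x_0^1 = x_0^2 = 0$. Everything else — the bound $M/2$, continuity of the metric, the reduction of $\wds$ to a supremum over matched indices — is routine once this matching is established. Since the statement is attributed to \cite{ZZ11,DZ}, I would also remark that the argument is essentially the one given there, reproduced here for completeness.
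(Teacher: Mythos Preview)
The paper does not give a proof of this lemma; it is simply cited from \cite{ZZ11,DZ}. Your reconstruction is the standard index-matching argument and is correct in outline, but one sentence in the surjectivity step of part~(ii) is not justified as written: you assert that a skipped point $x_{j_0}^2$ ``would be within $m/2$ of $x_i^1$ and within $m/2$ of $x_{i+1}^1$ simultaneously,'' yet nothing you have proved forces $x_{j_0}^2$ close to either of these particular points. The maps $\sigma,\tau$ you introduced give a much cleaner route: from $|x_i^1 - x_{\sigma(i)}^2| < m/2$ and $|x_{\sigma(i)}^2 - x_{\tau(\sigma(i))}^1| < m/2$ you get $|x_i^1 - x_{\tau(\sigma(i))}^1| < m$, and the $m$-separation of $\Gamma_1$ then forces $\tau\circ\sigma = \mathrm{id}$; symmetrically $\sigma\circ\tau = \mathrm{id}$. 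Thus $\sigma$ is a strictly increasing bijection of $\Z$, hence $\sigma(i) = i + c$ for some constant $c$, and your final step $\sigma(0)=0$ (using $x_0^1=x_0^2=0$ and the $m$-separation of $\Gamma_2$) gives $c=0$. With this adjustment the proof is complete.
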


By \x{dist-s}, the convergence in $(L_{m,M}(\Z),\dist)$ can be characterized in the following way.

\begin{lemma} \cite{ZZ11,DZ} \label{con-l}
Let $\Ga_k = \{ x_i^k \}_{i \in \Z} \in L_{m,M}(\Z)$, $k \in \N_0$. Then
$$\disp\lim_{k \to +\infty}\dist(\Ga_k,\Ga_0)= 0$$
if and only if
    \[
    \lim_{k \to \infty} \sup_{i \in \Z} |x_{i}^k - x_{i}^0| = 0.
    \]
\end{lemma}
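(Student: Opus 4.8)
The plan is to reduce the general convergence statement to the stable regime where the clean formula \eqref{dist-s} from Lemma \ref{dist-l}(ii) applies, and then pass to the limit. The key observation is that once $k$ is large enough that $\dist(\Ga_k,\Ga_0) < m/2$, Lemma \ref{dist-l}(ii) identifies $\dist(\Ga_k,\Ga_0)$ with the \emph{index-aligned} supremum $\sup_{i \in \Z} |x_i^k - x_i^0|$, which is exactly the quantity appearing on the right-hand side of the asserted equivalence. So the whole content of the lemma is really: the Hausdorff-type distance, which a priori involves a minimization over the nearest point, eventually coincides with the naive coordinatewise distance, and this forces the two notions of convergence to agree.

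For the forward direction, assume $\dist(\Ga_k,\Ga_0) \to 0$. Then there is $k_0$ such that $\dist(\Ga_k,\Ga_0) < m/2$ for all $k \ge k_0$, and for those $k$ Lemma \ref{dist-l}(ii) gives $\sup_{i} |x_i^k - x_i^0| = \dist(\Ga_k,\Ga_0) \to 0$; for the finitely many $k < k_0$ there is nothing to check, so $\sup_i |x_i^k - x_i^0| \to 0$. For the converse, suppose $\sup_i |x_i^k - x_i^0| \to 0$. Then for large $k$ this supremum is itself below $m/2$, and since by definition $\wds(\Ga_k,\Ga_0) = \sup_i \min_j |x_i^k - x_j^0| \le \sup_i |x_i^k - x_i^0|$ and symmetrically $\wds(\Ga_0,\Ga_k) \le \sup_i |x_i^0 - x_i^k| = \sup_i |x_i^k - x_i^0|$, we get $\dist(\Ga_k,\Ga_0) = \max\{\wds(\Ga_k,\Ga_0),\wds(\Ga_0,\Ga_k)\} \le \sup_i |x_i^k - x_i^0| \to 0$. (Here the hypothesis $\Ga_k \in L_{m,M}(\Z)$ is used only to invoke Lemma \ref{dist-l}(ii) in the forward direction; the converse does not even need the $m/2$ threshold, just the elementary bound $\min_j |x_i^k - x_j^0| \le |x_i^k - x_i^0|$.)

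The only subtlety — and the step I would be most careful about — is the forward direction's reliance on the spacing condition: one must first know that $\dist(\Ga_k,\Ga_0)$ eventually falls below the threshold $m/2$ so that the near-point matching in $\wds$ is realized by the same index $i$ on both sides, and this is precisely what Lemma \ref{dist-l}(ii) provides. Without the uniform lower bound $m$ on the gaps $\Delta x_i$, the nearest point to $x_i^k$ in $\Ga_0$ could in principle be $x_{i\pm1}^0$ even for small $\dist$, and the two convergence notions would genuinely differ; the membership in $L_{m,M}(\Z)$ is exactly the hypothesis that rules this out. Everything else is a routine $\e$-chase using the two inequalities $\wds(\Ga_1,\Ga_2)\le \sup_i|x_i^1-x_i^2|$ and Lemma \ref{dist-l}(ii), so I would keep the write-up short.
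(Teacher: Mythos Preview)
Your argument is correct. The paper does not actually prove this lemma in-text; it is stated with a citation to \cite{ZZ11,DZ}, so there is no in-paper proof to compare against. Your approach---reducing to the regime $\dist(\Ga_k,\Ga_0)<m/2$ and invoking the index-aligned formula \eqref{dist-s} from Lemma~\ref{dist-l}(ii) for the forward direction, and using the trivial bound $\min_j |x_i^k-x_j^0|\le |x_i^k-x_i^0|$ for the converse---is exactly the natural one and matches how the result is derived in the cited references.
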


We define the shift on $L_{0}(\Z)$ as in \cite{DZ}. For $\Gamma \in L_{0}(\Z)$ and $\tau \in \Z$, the shift of $\Gamma$ is
    \be \lb{sft-l}
    \Gamma \cdot \tau := \{ \hat x_i \}_{i \in \Z} \in L_{0}(\Z), \qq  \hat x_i := x_{i+\tau} - x_\tau.
    \ee
The family of shifts $\{ \Gamma \cdot \tau \}_{\tau \in \Z}$ yields a dynamical system on $L_{0}(\Z)$ with the following property: 
    \be \lb{dy-l}
    \Gamma \cdot (\tau_1 + \tau_2) = (\Gamma \cdot \tau_1) \cdot \tau_2 \quad \mbox{for } \tau_1, \tau_2 \in \Z.
    \ee
Note that this is not an isometric system. But we have the equicontinuity condition \x{we-is}, because we have

\begin{lemma} \lb{la-tau} \cite[Lemma 2.13]{DZ}
Let $\Gamma_k \in L_{m,M}(\Z),\ k=1,2$, and $\dist(\Gamma_1,\Gamma_2) < m/2$. Then for all $\tau \in \Z$, we have
    \[
    \dist(\Gamma_1 \cdot \tau,\Gamma_2 \cdot \tau) \le 2 \dist(\Gamma_1, \Gamma_2).
    \]
\end{lemma}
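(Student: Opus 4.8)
The plan is to reduce the claim to the distance formula \eqref{dist-s} for point sets that are sufficiently close, and then to estimate the shifted point sets directly in terms of the coordinates. Suppose $\Gamma_k = \{x_i^k\}_{i\in\Z} \in L_{m,M}(\Z)$, $k=1,2$, with $\dist(\Gamma_1,\Gamma_2) < m/2$. By Lemma \ref{dist-l}{\rm {\romannumeral2})} we then have $\dist(\Gamma_1,\Gamma_2) = \sup_{i\in\Z}|x_i^1 - x_i^2|$, so it suffices to produce, for each fixed $\tau\in\Z$, the bound $\dist(\Gamma_1\cdot\tau, \Gamma_2\cdot\tau) \le 2\sup_{i\in\Z}|x_i^1-x_i^2|$. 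Here the shifted sets are $\Gamma_k\cdot\tau = \{\hat x_i^k\}_{i\in\Z}$ with $\hat x_i^k = x_{i+\tau}^k - x_\tau^k$ by \eqref{sft-l}.

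The key computation is then elementary: for any $i\in\Z$,
\[
|\hat x_i^1 - \hat x_i^2| = \bigl| (x_{i+\tau}^1 - x_\tau^1) - (x_{i+\tau}^2 - x_\tau^2)\bigr| \le |x_{i+\tau}^1 - x_{i+\tau}^2| + |x_\tau^1 - x_\tau^2| \le 2\sup_{j\in\Z}|x_j^1 - x_j^2|.
\]
Taking the supremum over $i$ gives $\sup_{i\in\Z}|\hat x_i^1 - \hat x_i^2| \le 2\dist(\Gamma_1,\Gamma_2) \le 2\cdot(m/2) = m$. The one genuinely nontrivial point is that this coordinate-wise supremum actually equals the Hausdorff-type distance $\dist(\Gamma_1\cdot\tau,\Gamma_2\cdot\tau)$, rather than merely bounding it from above: a priori the $\wds$ formula involves a minimum over all indices $j$, not the matching index $i$. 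To invoke the equality \eqref{dist-s} for the shifted sets, I first note $\Gamma_k\cdot\tau \in L_{m,M}(\Z)$ (the shift permutes the gaps $\Delta x_i$, so the gap bounds $[m,M]$ are preserved), and then I need $\dist(\Gamma_1\cdot\tau,\Gamma_2\cdot\tau) < m/2$; but at the outset we only know $\dist(\Gamma_1\cdot\tau,\Gamma_2\cdot\tau) \le \sup_{i}|\hat x_i^1-\hat x_i^2|$ from the trivial direction of the $\wds$ inequality (each point of $\Gamma_1\cdot\tau$ is within $\sup_i|\hat x_i^1-\hat x_i^2|$ of the correspondingly-indexed point of $\Gamma_2\cdot\tau$, and vice versa), which already gives $\dist(\Gamma_1\cdot\tau,\Gamma_2\cdot\tau) \le 2\dist(\Gamma_1,\Gamma_2) < m$. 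That falls just short of $m/2$.

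To close this gap I would proceed as follows: the crude bound $\dist(\Gamma_1\cdot\tau,\Gamma_2\cdot\tau) \le \sup_i|\hat x_i^1 - \hat x_i^2|$ from the trivial (upper) half of the $\wds$ estimate is in fact all that is needed, since it already yields $\dist(\Gamma_1\cdot\tau,\Gamma_2\cdot\tau) \le 2\dist(\Gamma_1,\Gamma_2)$ directly, without ever needing the sharp equality \eqref{dist-s} for the shifted sets. That is, for the $\le$ direction one does not need to identify the optimal matching index; one only needs that the identity matching $i\mapsto i$ is one admissible competitor in the $\min_j$, giving $\wds(\Gamma_1\cdot\tau,\Gamma_2\cdot\tau) \le \sup_i|\hat x_i^1-\hat x_i^2|$ and symmetrically, hence $\dist(\Gamma_1\cdot\tau,\Gamma_2\cdot\tau) \le \sup_i|\hat x_i^1-\hat x_i^2| \le 2\dist(\Gamma_1,\Gamma_2)$ by the displayed computation above. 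So the main obstacle I anticipated — upgrading the coordinate-wise bound to the Hausdorff distance — dissolves, because we only need an upper bound on $\dist$, and the coordinate-wise supremum is automatically an upper bound for the Hausdorff-type distance via the identity matching. The only care needed is to verify that the hypothesis $\dist(\Gamma_1,\Gamma_2) < m/2$ is used exactly once, namely to invoke \eqref{dist-s} for the \emph{original} pair $\Gamma_1,\Gamma_2$ so as to rewrite $\dist(\Gamma_1,\Gamma_2)$ as $\sup_i|x_i^1-x_i^2|$, which is what feeds the telescoping estimate on the shifts.
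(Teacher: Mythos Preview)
Your argument is correct. The paper does not actually supply its own proof of this lemma; it is cited without proof from \cite[Lemma 2.13]{DZ}. Your approach --- using \eqref{dist-s} on the \emph{original} pair to rewrite $\dist(\Gamma_1,\Gamma_2)$ as $\sup_i|x_i^1-x_i^2|$, then bounding $|\hat x_i^1-\hat x_i^2|$ by the triangle inequality, and finally observing that the identity matching $i\mapsto i$ already gives $\dist(\Gamma_1\cdot\tau,\Gamma_2\cdot\tau)\le\sup_i|\hat x_i^1-\hat x_i^2|$ --- is the natural one and is essentially what the cited reference does. Your own mid-proof realization that equality \eqref{dist-s} is \emph{not} needed for the shifted sets (only the trivial upper bound is) is exactly right, and you could streamline the write-up by omitting the detour through the false obstacle.
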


Obviously, by Lemma \ref{la-tau} and Lemma \ref{la-com}, we have
    \[
    \Bigl(L_{m,M}(\Z)\Bigr)_{ec}=L_{m,M}(\Z),\q \mbox{and}\q \Bigl(L_{m,M}(\Z)\Bigr)_{co}=L_{m,M}(\Z).
    \]
Take $(Y,\dist):=(L_{m,M}(\Z),\dist)$. Then Definition \ref{un-ap} gives a characterization of \emph{almost periodic point sets}. Denote by $L_{m,M,ap}(\Z)$ the space of all almost periodic point sets in $L_{m,M}(\Z)$. An example of an almost periodic point set is 
    \[
    \Ga_a := \{ i + a \sin i \}_{i \in \Z},
    \]
where $|a| < 1$; see \cite{ZZ11,DZ}.

\begin{remark}
{\rm The definition of almost periodic point sets is equivalent to that of almost periodic lattices as defined in \cite{ZZ11} in which the parameter $\tau$ runs over the real axis and a $\R$ action on $L_{m,M}(\Z)$ is involved.
}
\end{remark}

By Lemma \ref{ap-com} we have

\begin{lemma}
$(L_{m,M,ap}(\Z),\dist)$ is a complete space.
\end{lemma}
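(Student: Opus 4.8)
The plan is to obtain this as a direct application of Lemma \ref{ap-com} with the choice $(Y,\dist) := (L_{m,M}(\Z),\dist)$. For that I must check that this $Y$ satisfies the three standing hypotheses under which Lemma \ref{ap-com} was proved: that $(Y,\dist)$ is a complete metric space, that the $\Z$-shift action on it has the group structure \x{gp-y}, and that it satisfies the equicontinuity condition \x{we-is}. Once these are in place, Lemma \ref{ap-com} gives that $(Y_{ap},\dist)$ is complete; and since $L_{m,M,ap}(\Z)$ was defined to be precisely the set of almost periodic points of $L_{m,M}(\Z)$ in the sense of Definition \ref{un-ap}, we have $Y_{ap} = L_{m,M,ap}(\Z)$, which is the claim.

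The completeness of $(L_{m,M}(\Z),\dist)$ is exactly Lemma \ref{la-com}. The group structure \x{gp-y} is immediate from the definition \x{sft-l}: since $x_0 = 0$ for every $\Gamma = \{x_i\}_{i\in\Z} \in L_0(\Z)$, one has $\Gamma \cdot 0 = \{x_{i} - x_0\}_{i\in\Z} = \Gamma$, and the composition identity $\Gamma \cdot (\tau_1+\tau_2) = (\Gamma\cdot\tau_1)\cdot\tau_2$ is recorded as \x{dy-l}.

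The only point that calls for a short argument is the equicontinuity \x{we-is}, because the shift on $L_{m,M}(\Z)$ is not isometric. Here I would invoke Lemma \ref{la-tau}, which gives $\dist(\Gamma_1\cdot\tau,\Gamma_2\cdot\tau) \le 2\,\dist(\Gamma_1,\Gamma_2)$ for all $\tau \in \Z$ whenever $\dist(\Gamma_1,\Gamma_2) < m/2$. So, given $\e > 0$, I set $\delta_\e := \min\{m/2,\ \e/2\}$: if $\dist(\Gamma_1,\Gamma_2) < \delta_\e$, then in particular $\dist(\Gamma_1,\Gamma_2) < m/2$, so Lemma \ref{la-tau} applies and yields $\dist(\Gamma_1\cdot\tau,\Gamma_2\cdot\tau) \le 2\,\dist(\Gamma_1,\Gamma_2) < 2\delta_\e \le \e$ for every $\tau \in \Z$, which is exactly \x{we-is}.

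No step presents a genuine difficulty, so the only thing to be careful about is bookkeeping: first, that the local restriction $\dist < m/2$ in Lemma \ref{la-tau} does not obstruct the uniform choice of $\delta_\e$ demanded by \x{we-is} — handled by shrinking $\delta_\e$ below $m/2$ as above (this is exactly why the excerpt already records $(L_{m,M}(\Z))_{ec} = L_{m,M}(\Z)$); and second, that the abstract space $Y_{ap}$ produced by the machinery of Section~\ref{sec.2} coincides on the nose with the set $L_{m,M,ap}(\Z)$ as just defined. With both observations in hand, Lemma \ref{ap-com} delivers the completeness of $(L_{m,M,ap}(\Z),\dist)$.
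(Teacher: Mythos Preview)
Your proposal is correct and follows exactly the paper's approach: the paper records $(L_{m,M}(\Z))_{ec}=L_{m,M}(\Z)$ and $(L_{m,M}(\Z))_{co}=L_{m,M}(\Z)$ via Lemmas \ref{la-tau} and \ref{la-com}, sets $(Y,\dist):=(L_{m,M}(\Z),\dist)$, and then states the lemma with the one-line justification ``By Lemma \ref{ap-com}.'' Your write-up simply spells out the hypothesis-checking (including the $\delta_\e=\min\{m/2,\e/2\}$ choice for \x{we-is}) that the paper leaves implicit.
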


Similar to the mean values of almost periodic points, we may introduce the following quantity for almost periodic point sets.

\begin{lemma} \cite{ZZ11,DZ} \lb{den-l}
Let $\Gamma \in L_{m,M,ap}(\Z)$. Then the limit
    \[
    \lim_{z_2-z_1 \to +\oo} \frac{1}{z_2-z_1} \#\Bigl(\ti{\Gamma} \cap [z_1,z_2)\Bigr) =: [\Gamma]  \in \left[ \frac{1}{M}, \frac{1}{m} \right]
    \]
exists uniformly for all $\ti{\Gamma} \in \mathrm{H}(\Gamma)$, where $\#(\cdot)$ is the function counting the number of elements in a set. We call $[\Gamma]$ the density of $\Gamma$.
\end{lemma}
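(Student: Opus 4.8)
\medskip

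\noindent\textbf{Proof proposal.} The idea is to write the density as the reciprocal of the mean gap and to extract the latter from the uniform ergodic theorem of Lemma~\ref{atg}~(iv). First I would observe that $L_{m,M}(\Z)$ is closed and shift-invariant, so $\mathrm{H}(\Gamma)\subset L_{m,M}(\Z)$; hence every $\ti\Gamma=\{\hat x_i\}_{i\in\Z}\in\mathrm{H}(\Gamma)$ is strictly increasing with gaps $\Delta\hat x_i:=\hat x_i-\hat x_{i-1}\in[m,M]$ and $\hat x_i\to\pm\oo$. Then I would introduce the observable $h\colon\mathrm{H}(\Gamma)\to[m,M]$, $h(\ti\Gamma):=\Delta\hat x_1$, which is continuous by Lemma~\ref{con-l}, and note from \x{sft-l} that $h(\ti\Gamma\cdot\tau)=\Delta\hat x_{\tau+1}$, so that its Birkhoff sums telescope,
\[
\sum_{\tau=n_1}^{n_2-1}h(\ti\Gamma\cdot\tau)=\hat x_{n_2}-\hat x_{n_1}\qquad(n_1<n_2).
\]
Applying Lemma~\ref{atg}~(iv) with $f=h$ and setting $\bar h:=\int_{\mathrm{H}(\Gamma)}h\,\rd\nu_\Gamma$ (so $\bar h\in[m,M]$, since $h$ is $[m,M]$-valued and $\nu_\Gamma$ is a probability measure), one obtains: for every $\e>0$ there is $N$ such that $\bigl|(\hat x_{n_2}-\hat x_{n_1})-(n_2-n_1)\bar h\bigr|\le(n_2-n_1)\e$ for all $\ti\Gamma\in\mathrm{H}(\Gamma)$ and all $n_1<n_2$ with $n_2-n_1\ge N$; i.e., the $n$-th point of $\ti\Gamma$ sits near $n\bar h$ with an error controlled uniformly over the hull.

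The second step converts this into information about the counting function. Given $z_1<z_2$ and $\ti\Gamma=\{\hat x_i\}\in\mathrm{H}(\Gamma)$, let $n_k\in\Z$ be the unique index with $\hat x_{n_k-1}<z_k\le\hat x_{n_k}$ ($k=1,2$); by monotonicity $\#(\ti\Gamma\cap[z_1,z_2))=n_2-n_1$. The defining inequalities give the sandwich
\[
(\hat x_{n_2}-\hat x_{n_1})-\Delta\hat x_{n_2}<z_2-z_1\le(\hat x_{n_2}-\hat x_{n_1})+\Delta\hat x_{n_1},
\]
so $\bigl|(z_2-z_1)-(\hat x_{n_2}-\hat x_{n_1})\bigr|\le M$; moreover $z_2-z_1\le\hat x_{n_2}-\hat x_{n_1-1}\le(n_2-n_1+1)M$ forces $n_2-n_1\to+\oo$ (uniformly in $\ti\Gamma$) as $z_2-z_1\to+\oo$, and $z_2-z_1>\hat x_{n_2-1}-\hat x_{n_1}\ge(n_2-n_1-1)m$ bounds $n_2-n_1$ above by $(z_2-z_1)/m+1$. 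Thus for $z_2-z_1\ge(N+1)M$ the first step applies and yields $\bigl|(z_2-z_1)-(n_2-n_1)\bar h\bigr|\le(n_2-n_1)\e+M$; dividing by $\bar h\ge m$ and then by $z_2-z_1$, and inserting the upper bound on $n_2-n_1$, gives
\[
\left|\frac{\#(\ti\Gamma\cap[z_1,z_2))}{z_2-z_1}-\frac1{\bar h}\right|\le\frac{\e}{m}\Bigl(\frac1m+\frac1{z_2-z_1}\Bigr)+\frac{M}{m(z_2-z_1)},
\]
uniformly in $\ti\Gamma\in\mathrm{H}(\Gamma)$. Letting $z_2-z_1\to+\oo$ and then $\e\to0$ shows the limit exists and equals $[\Gamma]:=1/\bar h$, uniformly over the hull, and $\bar h\in[m,M]$ gives $[\Gamma]\in[1/M,1/m]$.

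The routine parts are the continuity of $h$, the telescoping identity, and the elementary gap estimates; the real input is Lemma~\ref{atg}~(iv). I expect the main obstacle to be the second step — passing from ``uniform linear growth of the positions $\hat x_n$'' to ``uniform linear growth of the counting function'' — since there one must absorb the two boundary gaps (each at most $M$) and, crucially, verify that the threshold on $z_2-z_1$ beyond which the ergodic estimate is available can be chosen independently of $\ti\Gamma$; this is exactly what upgrades the conclusion from pointwise on $\mathrm{H}(\Gamma)$ to uniform.
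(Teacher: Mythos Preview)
The paper does not prove this lemma; it simply cites \cite{ZZ11,DZ}. Your argument is correct and is the natural proof within the framework the paper sets up: apply the uniform ergodic theorem of Lemma~\ref{atg}~(iv) to the continuous gap observable $h(\ti\Gamma)=\Delta\hat x_1$, obtain uniform linear growth $\hat x_{n_2}-\hat x_{n_1}\approx(n_2-n_1)\bar h$, and then translate this into a counting statement via the elementary sandwich between indices and positions. The details you supply---continuity of $h$ via Lemma~\ref{con-l}, the telescoping identity from \x{sft-l}, the bound $|(z_2-z_1)-(\hat x_{n_2}-\hat x_{n_1})|\le M$, and the two-sided control $(z_2-z_1)/M-1\le n_2-n_1\le(z_2-z_1)/m+1$ ensuring the ergodic threshold $N$ is reached uniformly in $\ti\Gamma$---are all sound, and your final inequality correctly delivers the uniform limit $[\Gamma]=1/\bar h\in[1/M,1/m]$.
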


%
%
%
%
%
\subsection{Uniform Topology}

Based on the results above, we consider the subspace of $PC_b(\R)$ consisting of functions with jump discontinuities at points of $\Gamma \in L_{0}(\Z)$, and denote it by $PC_{b,0}(\R)$. We use the pair $(f,\Gamma)$ to represent an element in $PC_{b,0}(\R)$. Similarly, $PC_{u,0}(\R)$ denotes the subspace of $PC_{b,0}(\R)$ consisting of functions that are uniformly continuous on $\R \setminus \Gamma$, where $\Gamma \in L_0(\Z)$. Let $f$ be an even function and 
    \[
    f(x):=\left\lbrace \begin{array}{lll}
    x, \q \AND x \in (0,1), \\
    0, \q \AND x \in (i,i+1-\frac{1}{i+1}], \\
    (i+1)(x-i-1)+1, \q \AND x \in [i+1-\frac{1}{i+1},i+1),
    \end{array} \right.
    \]
where $i \in \N$. Then $f \in PC_{b,0}(\R)$ and $f$ is uniformly continuous on each interval of continuity from $\R \setminus \Z$, but $f \not \in PC_{u,0}(\R)$. This example also shows that $f$ is uniformly continuous on $\R \setminus F_r(\Z)$, for all $r >0$, where $F_r(\Z)$ is defined by \x{fr}. When the effect of $\delta$-interactions is taken into account, we denote by $PC_{b,\delta,0}(\R)$ the space of all bounded and piecewise continuous functions with jump discontinuities and $\delta$-interactions at points of $\Gamma \in L_{0}(\Z)$. Here boundedness means that a piecewise continuous function with no $\delta$-interaction is bounded on $\R$. Similarly $PC_{u,\delta,0}(\R)$ denotes the subspace of $PC_{b,\delta,0}(\R)$ consisting of all functions that are uniformly continuous on $\R \setminus \Gamma$, where $\Gamma \in L_0(\Z)$. The triple $(f,V,\Gamma)$ represents an element \x{pcde} in $PC_{u,\delta,0}(\R)$. For simplicity, we adopt the notation
$$\ce{^{V}_{\Gamma\,}}f:=(f,V,\Gamma).$$
If the point set is restricted in $L_{m,M}(\Z)$, then we denote the subspace by $PC_{u,\delta,m,M}(\R)$. Obviously we have
    \[
    \disp PC_{u,\delta,0}(\R) = \bigcup_{0 < m \le M < \oo} PC_{u,\delta,m,M}(\R).
    \]
We equip $PC_{u,\delta,m,M}(\R)$ with the uniform topology as follows. For $r>0$, a closed $r$-neighborhood of a subset $A \subset \R$ is denoted by
    \be \lb{fr}
    F_r(A):=\{x\in \R: |x-y|\le r\mbox{ for some } y \in A \}.
    \ee
Introduce the family $\{S_r\}_{r>0}$ of subsets of the product space $X:=PC_{u,\delta,m,M}(\R) \times PC_{u,\delta,m,M}(\R)$ by
    \bea
    \lb{def-s} S_r \hh & :=  &\hh \Bigl\{\Bigl(\ce{^{V_1}_{\Gamma_1}}f_1,\ce{^{V_2}_{\Gamma_2}}f_2\Bigr) \in X: \dist(\Gamma_1,\Gamma_2)<r, \ \|V_1-V_2\|_{\infty}<r \mbox{~and~} \\
    \EM |f_1(x)-f_2(x)|<r, \, \forall \, x\in \R\setminus F_r(\Gamma_1 \cup \Gamma_2)\Bigr\}  \subset X. \nn
    \eea
The set of all pairs $\Bigl(\ce{^{V}_{\Gamma\,}}f, \ce{^{V}_{\Gamma\,}}f\Bigr)$ for $\ce{^{V}_{\Gamma\,}}f \in PC_{u,\delta,m,M}(\R)$ is called the \emph{diagonal}, and is denoted by $\Delta\Bigl(PC_{u,\delta,m,M}(\R)\Bigr)$. For $r_i>0,\ i=1,2$, the \emph{composition} $S_{r_1} \circ S_{r_2}$ denotes the set of all pairs $\Bigl(\ce{^{V_1}_{\Gamma_1}}f_1,\ce{^{V_3}_{\Gamma_3}}f_3\Bigr)$ such that one has $\Bigl(\ce{^{V_1}_{\Gamma_1}}f_1,\ce{^{V_2}_{\Gamma_2}}f_2\Bigr) \in S_{r_1}$ and $\Bigl(\ce{^{V_2}_{\Gamma_2}}f_2,\ce{^{V_3}_{\Gamma_3}}f_3\Bigr) \in S_{r_2}$ for some $\ce{^{V_2}_{\Gamma_2}}f_2 \in PC_{u,\delta,m,M}(\R)$. Let
    \be \lb{con-u}
    U_0:=X, \ U_n:=S_{4^{-n}},\ n \in \N, \q \mbox{and} \q \mathscr{U}:=\{U_n\}_{n \in \N_0}.
    \ee
Then we have

\begin{lemma} \lb{lm-sf} For the subfamily $\mathscr{U}$, we have

\begin{itemize}
\smallskip
\item[{\romannumeral1}):] for all $n \in \N_0$, $\Delta\Bigl(PC_{u,\delta,m,M}(\R)\Bigr) \subset U_n$;
\smallskip
\item[{\romannumeral2}):] if $\Bigl(\ce{^{V_1}_{\Gamma_1}}f_1,\ce{^{V_2}_{\Gamma_2}}f_2\Bigr) \in U_n$, then $\Bigl(\ce{^{V_2}_{\Gamma_2}}f_2,\ce{^{V_1}_{\Gamma_1}}f_1\Bigr) \in U_n$;
\smallskip
\item[{\romannumeral3}):] if $n_1<n_2$, then $U_{n_2} \subset U_{n_1}$;
\smallskip
\item[{\romannumeral4}):]
$\bigcap_{n \in \N_0} U_{n}= \Delta\Bigl(PC_{u,\delta,m,M}(\R)\Bigr)$; and
\smallskip
\item[{\romannumeral5}):] for any $U_n$, there exists some $U_{\tilde{n}}$ such that $U_{\tilde{n}} \circ U_{\tilde{n}} \subset U_n$.

\end{itemize}
\end{lemma}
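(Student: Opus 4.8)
The plan is to verify the five properties of $\mathscr{U}$ one at a time, each reducing to a routine check against the definition \eqref{def-s} of $S_r$, together with the two elementary facts that $F_r(A)$ is monotone in $r$ (so $S_{r}\subset S_{r'}$ whenever $r<r'$ and $\Gamma_1\cup\Gamma_2$ is the same) and that $\dist$ is a genuine metric on $L_{m,M}(\Z)$ and on $\ell^\infty(\Z)$.

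First, for {\romannumeral1}): given $\ce{^{V}_{\Gamma\,}}f\in PC_{u,\delta,m,M}(\R)$, the pair $\bigl(\ce{^{V}_{\Gamma\,}}f,\ce{^{V}_{\Gamma\,}}f\bigr)$ has $\dist(\Gamma,\Gamma)=0<r$, $\|V-V\|_\infty=0<r$, and $|f(x)-f(x)|=0<r$ for every $x$; hence it lies in $S_r$ for all $r>0$, in particular in $U_n=S_{4^{-n}}$ for all $n\in\N$, and trivially in $U_0=X$. For {\romannumeral2}), symmetry of $S_r$ is immediate: the three defining conditions in \eqref{def-s} are symmetric in the two arguments, since $\dist(\Gamma_1,\Gamma_2)=\dist(\Gamma_2,\Gamma_1)$, $\|V_1-V_2\|_\infty=\|V_2-V_1\|_\infty$, $|f_1(x)-f_2(x)|=|f_2(x)-f_1(x)|$, and the exceptional set $F_r(\Gamma_1\cup\Gamma_2)=F_r(\Gamma_2\cup\Gamma_1)$ is unchanged. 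For {\romannumeral3}), if $n_1<n_2$ then $4^{-n_2}<4^{-n_1}$, and $S_{r'}\subset S_r$ whenever $r'<r$: each of the strict inequalities $\dist(\Gamma_1,\Gamma_2)<r'$, $\|V_1-V_2\|_\infty<r'$ implies the corresponding inequality with $r$, while $F_{r'}(\Gamma_1\cup\Gamma_2)\subset F_r(\Gamma_1\cup\Gamma_2)$ by \eqref{fr}, so the pointwise bound $|f_1(x)-f_2(x)|<r'$, which a priori holds on the larger complement $\R\setminus F_{r'}(\Gamma_1\cup\Gamma_2)$, in particular holds on $\R\setminus F_r(\Gamma_1\cup\Gamma_2)$ with the weaker bound $r$. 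The case $n_1=0$ is vacuous since $U_0=X$.

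For {\romannumeral4}): one inclusion is {\romannumeral1}). For the converse, suppose $\bigl(\ce{^{V_1}_{\Gamma_1}}f_1,\ce{^{V_2}_{\Gamma_2}}f_2\bigr)\in U_n$ for every $n$. Then $\dist(\Gamma_1,\Gamma_2)<4^{-n}$ for all $n$, forcing $\dist(\Gamma_1,\Gamma_2)=0$, hence $\Gamma_1=\Gamma_2=:\Gamma$; similarly $\|V_1-V_2\|_\infty=0$, hence $V_1=V_2$. Finally, fix any $x_0\in\R\setminus\Gamma$; since $\Gamma$ is discrete, $\dist(x_0,\Gamma)>0$, so for $n$ large enough $4^{-n}<\dist(x_0,\Gamma)$ and then $x_0\notin F_{4^{-n}}(\Gamma_1\cup\Gamma_2)=F_{4^{-n}}(\Gamma)$, so the defining condition of $U_n=S_{4^{-n}}$ gives $|f_1(x_0)-f_2(x_0)|<4^{-n}$; letting $n\to\infty$ yields $f_1(x_0)=f_2(x_0)$. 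As $f_1,f_2$ agree off the countable set $\Gamma$, the pair equals $\bigl(\ce{^{V}_{\Gamma\,}}f_1,\ce{^{V}_{\Gamma\,}}f_1\bigr)$, i.e.\ it lies in the diagonal.

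The main obstacle is {\romannumeral5}), the near-triangle-inequality, precisely because the exceptional region $F_r(\Gamma_1\cup\Gamma_2)$ attached to a pair depends on that pair's point sets, so chaining $\bigl(\ce{^{V_1}_{\Gamma_1}}f_1,\ce{^{V_2}_{\Gamma_2}}f_2\bigr)\in U_{\tilde n}$ with $\bigl(\ce{^{V_2}_{\Gamma_2}}f_2,\ce{^{V_3}_{\Gamma_3}}f_3\bigr)\in U_{\tilde n}$ does not immediately control $|f_1(x)-f_3(x)|$ on $\R\setminus F_r(\Gamma_1\cup\Gamma_3)$. The plan is to choose $\tilde n$ so that $4^{-\tilde n}\le \tfrac14\cdot 4^{-n}$ (for instance $\tilde n=n+1$), and then to argue as follows for a pair in $U_{\tilde n}\circ U_{\tilde n}$ with witness $\ce{^{V_2}_{\Gamma_2}}f_2$. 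The triangle inequality for $\dist$ gives $\dist(\Gamma_1,\Gamma_3)\le \dist(\Gamma_1,\Gamma_2)+\dist(\Gamma_2,\Gamma_3)<2\cdot 4^{-\tilde n}\le 4^{-n}$, and likewise $\|V_1-V_3\|_\infty<4^{-n}$. For the function part, take any $x\in\R\setminus F_{4^{-n}}(\Gamma_1\cup\Gamma_3)$; the key point is that $\Gamma_2$ lies within Hausdorff distance $<2\cdot 4^{-\tilde n}\le \tfrac12\cdot 4^{-n}$ of $\Gamma_1\cup\Gamma_3$ (since $\dist(\Gamma_1,\Gamma_2)<4^{-\tilde n}$ implies every point of $\Gamma_2$ is within $4^{-\tilde n}$ of $\Gamma_1$), so $F_{4^{-\tilde n}}(\Gamma_1\cup\Gamma_2\cup\Gamma_3)\subset F_{4^{-n}/2+4^{-\tilde n}}(\Gamma_1\cup\Gamma_3)\subset F_{4^{-n}}(\Gamma_1\cup\Gamma_3)$ by the choice of $\tilde n$; hence $x\notin F_{4^{-\tilde n}}(\Gamma_1\cup\Gamma_2)$ and $x\notin F_{4^{-\tilde n}}(\Gamma_2\cup\Gamma_3)$, so the two membership relations yield $|f_1(x)-f_2(x)|<4^{-\tilde n}$ and $|f_2(x)-f_3(x)|<4^{-\tilde n}$, whence $|f_1(x)-f_3(x)|<2\cdot 4^{-\tilde n}\le 4^{-n}$. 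Thus $\bigl(\ce{^{V_1}_{\Gamma_1}}f_1,\ce{^{V_3}_{\Gamma_3}}f_3\bigr)\in S_{4^{-n}}=U_n$, and $\tilde n=n+1$ works. (One should double-check the constants: the only place a Hausdorff-metric estimate is needed is the inclusion of the $F_r$-neighborhoods, and since $\dist$ on $L_{m,M}(\Z)$ is literally a sup over minimal distances, every point of $\Gamma_2$ is indeed within $\dist(\Gamma_1,\Gamma_2)$ of $\Gamma_1$, so the constant $\tfrac12$ suffices and the choice $4^{-\tilde n}\le\tfrac14 4^{-n}$ gives room to spare.)
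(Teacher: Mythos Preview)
Your proof is correct and follows essentially the same approach as the paper. For parts {\romannumeral1})--{\romannumeral4}) the paper simply declares them obvious, while you spell out the routine checks; for part {\romannumeral5}) both arguments reduce to showing that the $F_r$-neighborhoods of the intermediate point set $\Gamma_2$ are absorbed into a slightly larger neighborhood of $\Gamma_1\cup\Gamma_3$. The only notable difference is that the paper proves $S_{r/2}\circ S_{r/2}\subset S_r$ under the restriction $r<m/2$ by invoking Lemma~\ref{dist-l}~{\romannumeral2}) (the index-matching form of the metric), whereas you work directly from the Hausdorff-metric definition~\eqref{dist-11} to get $\Gamma_2\subset F_{4^{-\tilde n}}(\Gamma_1)$, which avoids any restriction on $r$ and is marginally cleaner.
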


\begin{proof} \fbox{{\rm {\romannumeral1}), {\romannumeral2}), {\romannumeral3}) and {\romannumeral4})}}\ : These statements are obvious.

\fbox{{\rm {\romannumeral5})}}\ : It suffices to show that for
any $r < m/2$, we have $S_{r/2} \circ S_{r/2} \subset S_r$. Indeed, let $\Bigl(\ce{^{V_1}_{\Gamma_1}}f_1,\ce{^{V_2}_{\Gamma_2}}f_2\Bigr) \in S_{r/2}$ and $\Bigl(\ce{^{V_2}_{\Gamma_2}}f_2,\ce{^{V_3}_{\Gamma_3}}f_3\Bigr) \in S_{r/2}$. Due to Lemma \ref{dist-l} ${\rm {\romannumeral2})}$ and \x{def-s}, we have
    \[
    F_{r/2}(\Gamma_1 \cup \Gamma_2) \subset F_{r}(\Gamma_1) \q\mbox{~and~}\q F_{r/2}(\Gamma_2 \cup \Gamma_3) \subset F_{r}(\Gamma_3).
    \]
This implies that
    \[
    F_{r/2}(\Gamma_1 \cup \Gamma_2) \cup F_{r/2}(\Gamma_2 \cup \Gamma_3) \subset F_{r}(\Gamma_1) \cup F_{r}(\Gamma_3)=F_{r}(\Gamma_1 \cup \Gamma_3).
    \]
Then for $x\in \R\setminus F_r(\Gamma_1 \cup \Gamma_3)$, we have
    \[
    |f_1(x)-f_3(x)|\le |f_1(x)-f_2(x)|+|f_2(x)-f_3(x)|<r.
    \]
Thus we have the desired result $\Bigl(\ce{^{V_1}_{\Gamma_1}}f_1,\ce{^{V_3}_{\Gamma_3}}f_3\Bigr) \in S_r$.
\end{proof}

By \cite[p.177, Theorem 2]{Ke55}, we know that $\mathscr{U}$ is a base for some uniformity for $PC_{u,\delta,m,M}(\R)$. Then $PC_{u,\delta,m,M}(\R)$ can be equipped with the uniform topology denoted by $\mathscr{T}$ that is generated from $\mathscr{U}$.

\begin{theorem}{\rm (\textbf{Alexandroff-Urysohn})} \cite[p.186, Theorem 13]{Ke55} \lb{thm-au} A uniform space $(Y, \mathscr{T})$ is metrizable if and only if it is Hausdorff and its uniformity $\mathscr{U}$ has a countable base. Furthermore, let $\mathscr{U}=\{U_n\}_{n \in \N_0}$ be the base and $\dist: Y \times Y \to \R^+_0$ be the induced metric. Then we have
    \be \lb{u-d}
    U_n \subset \{(x,y) \in Y \times Y:\ \dist(x,y)<2^{-n}\} \subset U_{n-1}, \qqf  n \in \N.
    \ee
\end{theorem}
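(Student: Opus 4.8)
The plan is to prove the two implications of the equivalence separately, with essentially all the work in the sufficiency direction, and then to extract the quantitative inclusions \x{u-d} from the construction. For necessity, if $\mathscr{T}$ is induced by a metric $\rho$, then the entourages $\{(x,y)\in Y\times Y:\rho(x,y)<1/n\}$, $n\in\N$, form a countable base of $\mathscr{U}$, and since $\bigcap_{n\in\N}\{(x,y):\rho(x,y)<1/n\}=\Delta(Y)$, the space $(Y,\mathscr{T})$ is Hausdorff; this direction is immediate.

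For sufficiency, the first step is to replace the given countable base by a \emph{normal sequence} $\{U_n\}_{n\in\N_0}$: a decreasing sequence of symmetric entourages with $U_0=Y\times Y$, with $U_{n+1}\circ U_{n+1}\circ U_{n+1}\subset U_n$ for all $n$, and with $\bigcap_{n}U_n=\Delta(Y)$. The last equality is exactly where the Hausdorff hypothesis enters (in a uniform space the intersection of all entourages equals $\Delta(Y)$ if and only if the space is Hausdorff). In the situation we actually care about, $Y=PC_{u,\delta,m,M}(\R)$ with $\mathscr{U}$ as in \x{con-u}, this reduction is unnecessary, since Lemma \ref{lm-sf} already supplies all of these properties; for the triple composition one iterates $S_{r/4}\circ S_{r/4}\circ S_{r/4}\subset S_r$, which comes from Lemma \ref{lm-sf} {\romannumeral5}). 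Next, define $g:Y\times Y\to\R^+_0$ by $g(x,y):=2^{-n}$ whenever $(x,y)\in U_{n-1}\setminus U_n$ for some $n\ge1$, and $g(x,y):=0$ whenever $(x,y)\in\bigcap_{m}U_m$. Then $g$ is symmetric, it vanishes exactly on the diagonal, one has $(x,y)\in U_n$ if and only if $g(x,y)\le 2^{-n-1}$, and the triple-composition condition yields the quasi-triangle inequality $g(x,z)\le 2\max\{g(x,y),g(y,z)\}$ for all $x,y,z\in Y$.

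The core of the proof is the Frink chaining construction: set
    \[
    \dist(x,y):=\inf\Bigl\{\textstyle\sum_{i=1}^{n}g(z_{i-1},z_i)\ :\ n\ge1,\ z_0=x,\ z_n=y\Bigr\}.
    \]
It is immediate that $\dist$ is symmetric, obeys the triangle inequality, has $\dist(x,x)=0$, and satisfies $\dist\le g$. The main obstacle is the reverse bound $\dist\ge\tfrac12 g$, which I would prove by induction on the chain length, establishing $\sum_{i=1}^{n}g(z_{i-1},z_i)\ge\tfrac12 g(z_0,z_n)$ for every chain $z_0,\dots,z_n$: writing $S$ for the chain sum, choose the largest index $k$ with $\sum_{i=1}^{k}g(z_{i-1},z_i)\le S/2$; then the prefix $z_0,\dots,z_k$ and the suffix $z_{k+1},\dots,z_n$ each have chain sum at most $S/2$, so the inductive hypothesis gives $g(z_0,z_k)\le S$ and $g(z_{k+1},z_n)\le S$, while $g(z_k,z_{k+1})\le S$ trivially, and two applications of the quasi-triangle inequality give $g(z_0,z_n)\le 2S$. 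Passing to the infimum over chains yields $\tfrac12 g\le\dist\le g$.

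Finally, $\dist(x,y)=0$ forces $g(x,y)=0$, hence $x=y$ (again using $\bigcap_n U_n=\Delta(Y)$), so $\dist$ is a genuine metric. The inclusions \x{u-d} follow from this sandwich: if $(x,y)\in U_n$ then $g(x,y)\le 2^{-n-1}$, so $\dist(x,y)\le g(x,y)<2^{-n}$; conversely, if $\dist(x,y)<2^{-n}$ then $g(x,y)\le 2\dist(x,y)<2^{-n+1}$, and since the nonzero values of $g$ are powers of two this forces $g(x,y)\le 2^{-n}$, i.e.\ $(x,y)\in U_{n-1}$. Hence $U_n\subset\{(x,y):\dist(x,y)<2^{-n}\}\subset U_{n-1}$; in particular the uniformity generated by $\dist$ coincides with $\mathscr{U}$, so $\dist$ metrizes $\mathscr{T}$ and \x{u-d} holds.
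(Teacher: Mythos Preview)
The paper does not prove this theorem; it simply cites it from Kelley \cite[p.~186, Theorem~13]{Ke55}. Your write-up is the standard Frink chaining argument that Kelley gives, so in that sense your approach matches the cited source, and the overall strategy is correct.

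There is one imprecision worth fixing. You state the two-term quasi-triangle inequality $g(x,z)\le 2\max\{g(x,y),g(y,z)\}$ and then say that ``two applications'' of it yield $g(z_0,z_n)\le 2S$ in the inductive step. Two applications of the two-term inequality only give $g(z_0,z_n)\le 4S$, which would produce $\tfrac14 g\le\dist$ and hence only $\{\dist<2^{-n}\}\subset U_{n-2}$, not the $U_{n-1}$ required by \x{u-d}. What you actually need (and what the normal-sequence condition $U_{n+1}\circ U_{n+1}\circ U_{n+1}\subset U_n$ gives directly) is the \emph{three}-term inequality
\[
g(x,w)\le 2\max\{g(x,y),\,g(y,z),\,g(z,w)\}.
\]
With this, from $g(z_0,z_k)\le S$, $g(z_k,z_{k+1})\le S$, $g(z_{k+1},z_n)\le S$ you get $g(z_0,z_n)\le 2S$ in one step, and the rest of your argument (including the derivation of \x{u-d}) goes through verbatim.
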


\begin{lemma} \lb{di-pcud}
The uniform space $(PC_{u,\delta,m,M}(\R),\mathscr{T})$ is metrizable.
\end{lemma}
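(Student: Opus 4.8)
The plan is to apply the Alexandroff--Urysohn theorem (Theorem \ref{thm-au}) to the uniform space $(PC_{u,\delta,m,M}(\R),\mathscr{T})$. Since Lemma \ref{lm-sf} already establishes that $\mathscr{U}=\{U_n\}_{n\in\N_0}$ is a countable base for the uniformity, the only thing left to verify is that the uniform topology $\mathscr{T}$ is Hausdorff, which by the standard characterization of Hausdorffness for uniform spaces amounts to checking that $\bigcap_{n\in\N_0}U_n$ equals the diagonal $\Delta\bigl(PC_{u,\delta,m,M}(\R)\bigr)$. But this is precisely Lemma \ref{lm-sf}{\romannumeral4}). Hence metrizability follows immediately.

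Concretely, I would write: by Lemma \ref{lm-sf}, $\mathscr{U}$ satisfies the axioms of a base for a uniformity, so it has a countable base; by Lemma \ref{lm-sf}{\romannumeral4}), $\bigcap_{n\in\N_0}U_n=\Delta\bigl(PC_{u,\delta,m,M}(\R)\bigr)$, which is exactly the condition that the associated uniform topology be Hausdorff (two distinct points $\ce{^{V_1}_{\Gamma_1}}f_1\neq \ce{^{V_2}_{\Gamma_2}}f_2$ fail to lie in some $U_n$ together, so they can be separated by the basic entourages). Therefore $(PC_{u,\delta,m,M}(\R),\mathscr{T})$ is a Hausdorff uniform space with a countable base for its uniformity, and Theorem \ref{thm-au} yields that it is metrizable.

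I do not expect a serious obstacle here; the content has been front-loaded into Lemma \ref{lm-sf}. The only mild subtlety worth a sentence is to make sure the equivalence ``$\bigcap U_n=\Delta$'' $\iff$ ``$\mathscr{T}$ is Hausdorff'' is invoked correctly — in a uniform space the separation axiom $T_0$ (equivalently $T_2$, since uniform spaces are automatically completely regular) holds exactly when the intersection of all entourages is the diagonal, and this carries over verbatim when the uniformity is generated by the countable base $\mathscr{U}$ because $\bigcap_{U\in\mathscr{U}}U=\bigcap_{n}U_n$. One could also remark that \x{u-d} then provides an explicit metric $\dist$ compatible with $\mathscr{T}$ satisfying $U_n \subset \{\dist<2^{-n}\}\subset U_{n-1}$, which is the form of the metric that will be used in the subsequent sections when invoking almost periodicity. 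With these observations the proof is a one-line application of Theorem \ref{thm-au}.
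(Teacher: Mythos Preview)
Your proposal is correct and matches the paper's own proof essentially verbatim: the paper invokes Lemma \ref{lm-sf}{\romannumeral4}) to get Hausdorffness, notes that $\mathscr{U}$ is a countable base by construction \x{con-u}, and then applies Theorem \ref{thm-au}. Your additional remarks on the Hausdorff $\Leftrightarrow$ diagonal equivalence and on the explicit form \x{u-d} of the metric are accurate and harmless elaborations.
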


\begin{proof} By Lemma \ref{lm-sf} {\romannumeral4}), we know that $(PC_{u,\delta,m,M}(\R),\mathscr{T})$ is Hausdorff. By construction \x{con-u}, the uniformity $\mathscr{U}$ has a countable base. Due to Theorem \ref{thm-au}, we have the desired result.
\end{proof}

Then we may construct a metric $\dist: PC_{u,\delta,m,M}(\R) \times PC_{u,\delta,m,M}(\R) \to \R^+_0$  such that
    \be \lb{un-dist}
    (PC_{u,\delta,m,M}(\R),\dist)=(PC_{u,\delta,m,M}(\R),\mathscr{T}). 
    \ee
The convergence in $(PC_{u,\delta,m,M}(\R),\dist)$ can be characterized in the following way.

\begin{lemma} \lb{tr-con} 
Let $\ce{^{V_k}_{\Gamma_k}}f_k \in PC_{u,\delta,m,M}(\R)$, $k \in \N_0$. Then $\disp \lim_{k\to +\infty}\dist\Bigl(\ce{^{V_k}_{\Gamma_k}}f_k,\ce{^{V_0}_{\Gamma_0}}f_0\Bigr)= 0$ if and only if for any $\e>0$ there exists $k_{\e} \in \N$ such that for all $k \ge k_{\e}$, we have
    \[
    \dist(\Gamma_k,\Gamma_0)<\e, \  \|V_k-V_0\|_{\infty}<\e, \mbox{~and~}  |f_k(x)-f_0(x)|<\e, \, \forall \, x\in \R\setminus F_{\e}(\Gamma_k \cup \Gamma_0).
    \]
\end{lemma}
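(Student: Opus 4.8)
The plan is to unwind the claim directly from the Alexandroff--Urysohn comparison \x{u-d} and the explicit description \x{def-s} of the generating sets. By \x{un-dist} and Theorem~\ref{thm-au}, the metric $\dist$ on $PC_{u,\delta,m,M}(\R)$ is compatible with the uniformity generated by $\mathscr{U}=\{U_n\}$, and since $U_0=X$ and $U_n=S_{4^{-n}}$ for $n\ge 1$ by \x{con-u}, the inclusions \x{u-d} become
    \[
    S_{4^{-n}}\subset\bigl\{(\xi,\zeta):\dist(\xi,\zeta)<2^{-n}\bigr\}\subset S_{4^{-(n-1)}},\qquad n\in\N.
    \]
The elementary fact I would isolate first is that the family $\{S_r\}_{r>0}$ is nested: $0<r'\le r$ implies $S_{r'}\subset S_r$. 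This is because $F_{r'}(\Gamma_1\cup\Gamma_2)\subset F_r(\Gamma_1\cup\Gamma_2)$ by \x{fr}, so an estimate $|f_1(x)-f_2(x)|<r'$ valid on $\R\setminus F_{r'}(\Gamma_1\cup\Gamma_2)$ a fortiori holds on the smaller set $\R\setminus F_r(\Gamma_1\cup\Gamma_2)$ with the weaker bound $r$, while the $\dist$- and $\|\cdot\|_\infty$-conditions only relax.

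With this in hand, the proof amounts to observing that \emph{both} sides of the asserted equivalence are equivalent to the single statement
    \[
    (\star)\qquad \text{for every }\eps>0\text{ there is }k_\eps\text{ with }\bigl(\ce{^{V_k}_{\Gamma_k}}f_k,\ce{^{V_0}_{\Gamma_0}}f_0\bigr)\in S_\eps\text{ for all }k\ge k_\eps.
    \]
The equivalence of $(\star)$ with the three-part condition of the lemma is immediate from \x{def-s}: membership of $\bigl(\ce{^{V_k}_{\Gamma_k}}f_k,\ce{^{V_0}_{\Gamma_0}}f_0\bigr)$ in $S_\eps$ is \emph{by definition} the conjunction of $\dist(\Gamma_k,\Gamma_0)<\eps$, $\|V_k-V_0\|_\infty<\eps$, and $|f_k(x)-f_0(x)|<\eps$ for all $x\in\R\setminus F_\eps(\Gamma_k\cup\Gamma_0)$ --- the point being merely that the same parameter $\eps$ governs the radius of the excised neighborhood. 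For the equivalence of $(\star)$ with $\dist$-convergence: if $\dist\bigl(\ce{^{V_k}_{\Gamma_k}}f_k,\ce{^{V_0}_{\Gamma_0}}f_0\bigr)\to 0$, then given $\eps>0$ choose $n\ge 2$ with $4^{-(n-1)}\le\eps$; eventually the distance is $<2^{-n}$, so by \x{u-d} the pair eventually lies in $U_{n-1}=S_{4^{-(n-1)}}\subset S_\eps$ by nestedness, which is $(\star)$. Conversely, assuming $(\star)$, given $\eta>0$ choose $n$ with $2^{-n}<\eta$; then eventually the pair lies in $S_{4^{-n}}=U_n\subset\{\dist<2^{-n}\}\subset\{\dist<\eta\}$, so the distance tends to $0$.

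There is no serious obstacle; the only thing requiring a moment's care is the bookkeeping between the two radii appearing in \x{u-d}, which is why one selects $n$ with $4^{-(n-1)}\le\eps$ rather than with $2^{-n}\le\eps$, together with the routine verification that the $S_r$ are nested. Everything else is a direct transcription through Theorem~\ref{thm-au} and \x{def-s}.
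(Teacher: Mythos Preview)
Your proposal is correct and follows exactly the approach the paper itself takes: the paper's proof is the one-liner ``Due to \x{u-d}, \x{con-u} and \x{def-s}, it is easy to check the characterization,'' and you have simply spelled out those routine details (the nestedness of the $S_r$ and the radius bookkeeping through \x{u-d}). One cosmetic slip: for $n=1$ the right-hand set in your displayed inclusion should be $U_0=X$ rather than $S_{4^0}$, but since you explicitly take $n\ge 2$ in the forward direction and use only the left inclusion in the backward direction, this does not affect the argument.
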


\begin{proof}
Due to \x{u-d}, \x{con-u} and \x{def-s}, it is easy to check the characterization.
\end{proof}

\begin{remark} \lb{re-f}
{\rm $F_{\e}(\Gamma_k \cup \Gamma_0)$ can be replaced by $F_{\e}(\Gamma_0)$. Since $F_{\e}(\Gamma_0) \subset F_{\e}(\Gamma_k \cup \Gamma_0)$, we only need to show this implication $\Longrightarrow$. Indeed, assume that
    \[
    \disp\lim_{k \to +\infty}\dist\Bigl(\ce{^{V_k}_{\Gamma_k}}f_k,\ce{^{V_0}_{\Gamma_0}}f_0\Bigr)= 0.
    \]
For any $\e>0$, there exists $k_{\e/2} \in \N$ such that
    \[
    \dist(\Gamma_k,\Gamma_0)<\e/2, \qqf k \ge k_{\e/2}.
    \]
Without loss of generality, let $\e<m$. It follows from \x{dist-s} that
    \[
    F_{\e/2}(\Gamma_k \cup \Gamma_0)= F_{\e/2}(\Gamma_k) \cup F_{\e/2}(\Gamma_0) \subset F_{\e}(\Gamma_0), \qqf k \ge k_{\e/2}.
    \]
Thus we have the desired assertion.
}
\end{remark}

%
%
%
%
%

\subsection{Functions With Jump Discontinuities and $\delta$-Interactions}

In this subsection, we introduce the almost periodic functions with jump discontinuities and $\delta$-interactions. We consider a $\Z$ action on $PC_{u,\delta,m,M}(\R)$ by shifts. Let $f: \R \to \K$ be a piecewise continuous function and $\tau \in \R$. The shift of $f$ is
    \[
    f \cdot \tau := f(\cdot+ \tau).
    \]
Then denote for $\ce{^{V}_{\Gamma}}f \in PC_{u,\delta,m,M}(\R)$ and $\tau \in \Z$ the corresponding shifted element in $PC_{u,\delta,m,M}(\R)$ by
    \be \lb{sh-tr}
    \ce{^{V}_{\Gamma\,}}f \cdot \tau:= \ce{^{V\cdot \tau}_{\Gamma\cdot \tau}}(f\cdot x_\tau),
    \ee
where $\Gamma\cdot \tau$ and $V\cdot \tau$ are defined by \x{sft-l} and \x{sft-v}, respectively. Obviously we have
    \[
    \Orb\Bigl(\ce{^{V}_{\Gamma\,}}f\Bigr) \subset PC_{u,\delta,m,M}(\R).
    \]
Due to \x{lmm} and \x{dy-l}, the family of shifts $\Bigl\{\ce{^{V}_{\Gamma\,}}f \cdot \tau \Bigr\}_{\tau \in \Z}$ yields a dynamical system on $PC_{u,\delta,m,M}(\R)$ with the following property 
    \be \lb{dy-tr}
    \ce{^{V}_{\Gamma\,}}f \cdot (\tau_1 + \tau_2) = \Bigl(\ce{^{V}_{\Gamma\,}}f \cdot \tau_1\Bigr) \cdot \tau_2, \quad \mbox{for } \tau_1, \tau_2 \in \Z.
    \ee
This is not an isometric system because of shifts of point sets. However we have

\begin{lemma} \lb{tr-tau}
The shift action on $(PC_{u,\delta,m,M}(\R),\dist)$ satisfies the equicontinuity condition \xx{we-is}.
\end{lemma}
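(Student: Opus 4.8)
The plan is to verify the equicontinuity condition \x{we-is} directly from the definition of the uniform-topology metric on $PC_{u,\delta,m,M}(\R)$, by controlling separately each of the three ingredients appearing in the defining sets $S_r$ of \x{def-s}: the point-set distance $\dist(\Gamma_1,\Gamma_2)$, the $\ell^\oo$-distance $\|V_1-V_2\|_\infty$, and the sup-distance $|f_1(x)-f_2(x)|$ away from a neighborhood of the singular support. Fix $\e>0$; by Lemma \ref{tr-con} (and Remark \ref{re-f}) it suffices to produce $\delta_\e>0$ so that whenever the three quantities for $\ce{^{V_1}_{\Gamma_1}}f_1$ and $\ce{^{V_2}_{\Gamma_2}}f_2$ are all smaller than $\delta_\e$, the corresponding three quantities for the shifted pairs $\ce{^{V_1}_{\Gamma_1}}f_1\cdot\tau$ and $\ce{^{V_2}_{\Gamma_2}}f_2\cdot\tau$ are all smaller than $\e$, uniformly in $\tau\in\Z$.

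First I would handle the bi-sequence and point-set components, which are the easy ones. For bi-sequences the shift \x{sft-v} is an isometry by \x{bs-is}, so $\|V_1\cdot\tau-V_2\cdot\tau\|_\infty=\|V_1-V_2\|_\infty$ with no loss. For point sets, the shift \x{sft-l} is \emph{not} an isometry, but Lemma \ref{la-tau} gives $\dist(\Gamma_1\cdot\tau,\Gamma_2\cdot\tau)\le 2\dist(\Gamma_1,\Gamma_2)$ for all $\tau$, provided $\dist(\Gamma_1,\Gamma_2)<m/2$; so a factor of $2$ is the only price, and this is exactly the equicontinuity already recorded for $L_{m,M}(\Z)$. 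The genuinely new point is the function component. Here the shift is $(f\cdot x_\tau)(\cdot)=f(\cdot+x_\tau)$ as in \x{sh-tr}, i.e.\ a \emph{translation of $f$ by the real number $x_\tau$}, where $x_\tau$ is the $\tau$-th point of $\Gamma_1$ (or $\Gamma_2$ — and note these differ). The obstacle is twofold: (a) the two functions $f_1,f_2$ get translated by possibly different amounts $x^1_\tau$ and $x^2_\tau$, and (b) after translation the excluded set $F_\e(\Gamma_1\cdot\tau\cup\Gamma_2\cdot\tau)$ must still be arranged to contain the translate of the old excluded set, so that the hypothesis $|f_1(x)-f_2(x)|<\delta_\e$ on $\R\setminus F_{\delta_\e}(\Gamma_1\cup\Gamma_2)$ can be applied at the translated arguments.

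To overcome (a), I would use the uniform continuity of $f_1$ on $\R\setminus\Gamma_1$: since $\ce{^{V_1}_{\Gamma_1}}f_1\in PC_{u,\delta,m,M}(\R)$, for $\e/2$ there is $\delta'>0$ with $|f_1(s)-f_1(t)|<\e/2$ whenever $|s-t|<\delta'$ and $s,t$ lie in the same continuity interval of $\Gamma_1$; moreover $|x^1_\tau-x^2_\tau|\le\dist(\Gamma_1,\Gamma_2)$ by Lemma \ref{dist-l} {\romannumeral2}) once $\dist(\Gamma_1,\Gamma_2)<m/2$, so choosing $\delta_\e<\delta'$ controls the difference $|f_1(x+x^1_\tau)-f_1(x+x^2_\tau)|$ away from a $\delta'$-neighborhood of $\Gamma_1-x^1_\tau$, which (after the shift) is $\Gamma_1\cdot\tau$. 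Then
\[
|f_1(x+x^1_\tau)-f_2(x+x^2_\tau)|\le|f_1(x+x^1_\tau)-f_1(x+x^2_\tau)|+|f_1(x+x^2_\tau)-f_2(x+x^2_\tau)|<\e/2+\delta_\e.
\]
For (b), I would take $\delta_\e$ small enough (say $\delta_\e<m/2$ and $\delta_\e<\delta'$) so that, using Lemma \ref{dist-l} {\romannumeral2}) and the argument of Lemma \ref{lm-sf} {\romannumeral5}), one has $F_{\delta_\e}(\Gamma_1\cup\Gamma_2)\supset$ a small enough neighborhood that its translate by $-x^2_\tau$ sits inside $F_\e(\Gamma_1\cdot\tau\cup\Gamma_2\cdot\tau)$; equivalently, for $x\in\R\setminus F_\e(\Gamma_1\cdot\tau\cup\Gamma_2\cdot\tau)$ one checks $x+x^2_\tau\notin F_{\delta_\e}(\Gamma_1\cup\Gamma_2)$ and $x+x^1_\tau$ stays in a continuity interval of $\Gamma_1$. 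Combining the three estimates — isometry for $V$, the factor $2$ from Lemma \ref{la-tau} for $\Gamma$, and the $\e/2+\delta_\e$ bound for $f$ — and finally shrinking $\delta_\e$ once more so that $2\delta_\e<\e$ and $\e/2+\delta_\e<\e$, yields $\ce{^{V_1}_{\Gamma_1}}f_1\cdot\tau$ and $\ce{^{V_2}_{\Gamma_2}}f_2\cdot\tau$ lying in $S_\e$ for every $\tau\in\Z$, which by \x{u-d} and \x{con-u} is exactly the equicontinuity condition \x{we-is}. I expect the bookkeeping in step (b) — keeping the excluded neighborhoods nested correctly through a translation by an amount that itself depends on $\Gamma_1$ versus $\Gamma_2$ — to be the main technical obstacle, everything else being a routine triangle-inequality assembly.
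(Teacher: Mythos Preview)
Your proposal is correct and follows essentially the same route as the paper's proof: both arguments reduce to the three components of $S_r$, dispatch the $V$-part by the isometry \x{bs-is} and the $\Gamma$-part by Lemma~\ref{la-tau}, and for the function part split $|f_1(x+x^1_\tau)-f_2(x+x^2_\tau)|$ into a ``same translation amount'' term handled by the hypothesis and a ``different translation amounts'' term handled by uniform continuity of one of the $f_k$. The only cosmetic differences are that the paper uses the uniform continuity of $f_2$ rather than $f_1$ (so its triangle inequality reads $|f_1\cdot x^1_\tau - f_2\cdot x^1_\tau| + |f_2\cdot x^1_\tau - f_2\cdot x^2_\tau|$), and that it carries out the bookkeeping in the dyadic scale $4^{-n}$ of the base $\mathscr{U}$ rather than with a generic $\e,\delta_\e$; your step (b) is precisely the content of the paper's Claim~1 and Claim~2.
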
 

\begin{proof}
Let $\ce{^{V_k}_{\Gamma_k}}f_k \in PC_{u,\delta,m,M}(\R), \ k=1,2$. It suffices to show that for any $n \in \N$, there exists $k_n \in \N$ such that if $\dist\Bigl(\ce{^{V_1}_{\Gamma_1}}f_1,\ce{^{V_2}_{\Gamma_2}}f_2\Bigr)<2^{-k_n}$, then we have
    \be \lb{tr-tau0}
    \dist\Bigl(\ce{^{V_1}_{\Gamma_1}}f_1 \cdot \tau,\ce{^{V_2}_{\Gamma_2}}f_2 \cdot \tau\Bigr) < 2^{-n}, \qqf \tau \in \Z.
    \ee
Denote $\Gamma_k:=\{x^k_i\}_{i \in \Z} \in L_{m,M}(\Z),\ k=1,2$. Due to \x{u-d}, \x{con-u}, \x{def-s} and \x{sh-tr}, it suffices to prove that
    \be \lb{tr-tau1}
    \dist(\Gamma_1 \cdot \tau,\Gamma_2 \cdot \tau) < 4^{-n}, \qqf \tau \in \Z,
    \ee
    \be \lb{tr-tau2}
    \|V_1 \cdot \tau -V_2 \cdot \tau \|_{\infty} < 4^{-n}, \qqf \tau \in \Z,
    \ee
and
    \be \lb{tr-tau3}
    |f_1\cdot x_{\tau}^1(x)-f_2\cdot x_{\tau}^2(x)| < 4^{-n} \ \mbox{for~} x \in \R \setminus F_{4^{-n}}(\Gamma_1 \cdot \tau \cup \Gamma_2 \cdot \tau), \qqf \tau \in \Z.
    \ee
Since $\ce{^{V_2}_{\Gamma_2}}f_2 \in PC_{u,\delta,m,M}(\R)$, for any $n \in \N$, there exists $\ti k_n \in \N$ such that
    \bea
    \EM \lb{tr-tau4}  |f_2(\tilde{x})-f_2(\check{x})|< 4^{-n-1}, \\
    \EM \mbox{for~} |\tilde{x}-\check{x}|<4^{-\ti k_n}  \mbox{~and~} \tilde{x},\ \check{x} \mbox{~belong~to~the~same~interval~from~} \R \setminus \Gamma_2. \nn
    \eea
Without loss of generality, assume that $\ti k_n \ge n$. We assert that
    \be \lb{tr-tau45}
    k_n:=\ti k_n+2 \ge n+2
    \ee
is the desired number such that \x{tr-tau0} holds. Indeed, if $\dist\Bigl(\ce{^{V_1}_{\Gamma_1}}f_1,\ce{^{V_2}_{\Gamma_2}}f_2\Bigr)<2^{-k_n}$, then from \x{u-d}, \x{con-u} and \x{def-s} we find
    \be \lb{tr-tau6}
    \dist(\Gamma_1,\Gamma_2) < 4^{-k_n+1},
    \ee
    \be \lb{tr-tau7}
    \|V_1-V_2\|_{\infty} < 4^{-k_n+1},
    \ee
and
    \be \lb{tr-tau8}
    |f_1(x)-f_2(x)| < 4^{-k_n+1} \quad \mbox{for~} x \in \R \setminus F_{4^{-k_n+1}}(\Gamma_1 \cup \Gamma_2).
    \ee
By Lemma \ref{la-tau}, \x{tr-tau6} and \x{tr-tau45}, we have the desired result \x{tr-tau1}. By \x{bs-is}, \x{tr-tau7} and \x{tr-tau45}, we have the desired result \x{tr-tau2}. Denote $\delta_{\tau}:=x_{\tau}^2-x_{\tau}^1, \ \tau \in \Z$. From \x{dist-s} and \x{tr-tau6} we see that
    \be \lb{tr-tau89}
    |\delta_{\tau}|<4^{-k_n+1}, \qqf \tau \in \Z.
    \ee
This implies that
    \be \lb{tr-tau9}
    F_{4^{-k_n+1}}(\{x^2_{i}-x^1_\tau\}_{i \in \Z})= F_{4^{-k_n+1}}(\{x^2_{i+\tau}-x^2_\tau+\delta_\tau\}_{i \in \Z}) \subset F_{4^{-k_n+2}}(\Gamma_2 \cdot \tau).
    \ee
We make the following claims.

\medskip
\fbox{Claim 1}\ : for any $\tau \in \Z$, we have
    \be \lb{tr-tau10}
    |f_1 \cdot x_{\tau}^1(x)-f_2 \cdot x_{\tau}^1(x)| < 4^{-n-1}, \qqf x \in \R \setminus F_{4^{-n}}(\Gamma_1 \cdot \tau \cup \Gamma_2 \cdot \tau).
    \ee
In fact, by \x{tr-tau9} and \x{tr-tau45}, we have
    \be \lb{tr-tau11}
    F_{4^{-k_n+1}}(\{x^1_{i}-x^1_\tau\}_{i \in \Z} \cup \{x^2_{i}-x^1_\tau\}_{i \in \Z}) \subset F_{4^{-n}}(\Gamma_1 \cdot \tau \cup \Gamma_2 \cdot \tau).
    \ee 
For any $x \in \R \setminus F_{4^{-n}}(\Gamma_1 \cdot \tau \cup \Gamma_2 \cdot \tau)$, \x{tr-tau11} yields
    \[
    x+x_{\tau}^1 \in  \R \setminus F_{4^{-k_n+1}}(\Gamma_1 \cup \Gamma_2).
    \]
The claim \x{tr-tau10} is deduced by \x{tr-tau8} and \x{tr-tau45}.

\medskip
\fbox{Claim 2}\ : for any $\tau \in \Z$, we have
    \be \lb{tr-tau12}
    |f_2 \cdot x_{\tau}^1(x)-f_2 \cdot x_{\tau}^2(x)| < 4^{-n-1}, \qqf x \in \R \setminus F_{4^{-n}}(\Gamma_2 \cdot \tau).
    \ee
In fact, by \x{tr-tau9} and \x{tr-tau45} we have
    \be \lb{tr-tau13}
    F_{4^{-k_n+1}}(\{x^2_{i}-x^2_\tau\}_{i \in \Z} \cup \{x^2_{i}-x^1_\tau\}_{i \in \Z}) \subset F_{4^{-n}}(\Gamma_2 \cdot \tau)
    \ee 
For any $x\in \R \setminus F_{4^{-n}}(\Gamma_2 \cdot \tau)$, \x{tr-tau13} yields
    \[
    x+x_{\tau}^1,\ x+x_{\tau}^2 \in  \R \setminus F_{4^{-k_n+1}}(\Gamma_2) \subset \R \setminus \Gamma_2.
    \]
It follows from \x{tr-tau89} that $x+x_{\tau}^1,\ x+x_{\tau}^2$ necessarily belong to the same interval from $\R \setminus \Gamma_2$. The claim \x{tr-tau12} is deduced by \x{tr-tau45} and \x{tr-tau4}.

We obtain the desired result \x{tr-tau3} by the two claims above, completing the proof.
\end{proof}

\begin{remark} \lb{pt-ec}
{\rm As a natural consequence of Lemma \ref{tr-tau}, we see that each $\ce{^{V}_{\Gamma\,}}f \in PC_{u,\delta,m,M}(\R)$ is an equicontinuous point, i.e.,
    \[
    \Bigl(PC_{u,\delta,m,M}(\R)\Bigr)_{ec}=PC_{u,\delta,m,M}(\R).
    \]
}
\end{remark}

Furthermore we obtain

\begin{lemma} \lb{pcu-com}
For any $\ce{^{V}_{\Gamma\,}}f \in PC_{u,\delta,m,M}(\R)$, $\ce{^{V}_{\Gamma\,}}f$ is a complete point in $PC_{u,\delta,m,M}(\R)$. That is,
    \[
    \Bigl(PC_{u,\delta,m,M}(\R)\Bigr)_{co}=PC_{u,\delta,m,M}(\R).
    \]
\end{lemma}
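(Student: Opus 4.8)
The plan is to show that if $\bigl\{\ce{^{V_j}_{\Gamma_j}}f_j\bigr\}_{j \in \N} \subset PC_{u,\delta,m,M}(\R)$ is a Cauchy sequence in the metric $\dist$, then its limit again lies in $PC_{u,\delta,m,M}(\R)$; this is equivalent to completeness since $\mathrm{H}\bigl(\ce{^{V}_{\Gamma\,}}f\bigr)$ being complete means closed limits of Cauchy sequences in the ambient space land back in the space, and by Lemma~\ref{tr-con} Cauchy-ness is characterized componentwise. First I would unpack, via Lemma~\ref{tr-con} and Lemma~\ref{lm-sf}, what a Cauchy sequence means: for every $\e \in (0, m/2)$ there is $j_\e$ so that for $j, k \ge j_\e$ we have $\dist(\Gamma_j, \Gamma_k) < \e$, $\|V_j - V_k\|_\infty < \e$, and $|f_j(x) - f_k(x)| < \e$ for all $x \in \R \setminus F_\e(\Gamma_j \cup \Gamma_k)$. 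By Lemma~\ref{la-com}, $(L_{m,M}(\Z),\dist)$ is complete, so $\Gamma_j \to \Gamma_0$ for some $\Gamma_0 = \{x^0_i\}_{i \in \Z} \in L_{m,M}(\Z)$; by Lemma~\ref{bs-com}, $V_j \to V_0$ for some $V_0 \in \ell^\oo(\Z)$. The work is to produce the limiting piecewise continuous function $f_0$ with jump discontinuities exactly on $\Gamma_0$ and to verify $f_0 \in PC_{u}$ on $\R \setminus \Gamma_0$.

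The construction of $f_0$ goes as follows. Fix a compact set $K \subset \R$ avoiding a neighborhood of $\Gamma_0$; since $\Gamma_j \to \Gamma_0$ in the Hausdorff metric and by Lemma~\ref{con-l} the points converge index-by-index once $\dist(\Gamma_j,\Gamma_0) < m/2$, for $j$ large $K$ also avoids a neighborhood of $\Gamma_j$, hence $K \cap F_\e(\Gamma_j \cup \Gamma_k) = \emptyset$ for the relevant small $\e$ and large $j,k$. On such $K$ the Cauchy estimate $|f_j(x) - f_k(x)| < \e$ holds uniformly, so $\{f_j\}$ converges uniformly on $K$; letting $K$ exhaust $\R \setminus \Gamma_0$ defines $f_0$ on $\R \setminus \Gamma_0$, and one checks $f_0$ extends to a piecewise continuous function with (at most) jump discontinuities at the points of $\Gamma_0$ using the one-sided limits and the uniform local convergence away from each $x^0_i$. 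Boundedness of $f_0$ follows since $\|f_j - f_0\|_{L^\infty(\R \setminus F_\eta(\Gamma_0))} \to 0$ for every $\eta > 0$ and the $f_j$ are uniformly bounded (Cauchy in a quasi-sup sense plus $PC_b$).

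The uniform continuity of $f_0$ on $\R \setminus \Gamma_0$ — i.e.\ $f_0 \in PC_{u,\delta,m,M}(\R)$ — is where I expect the main obstacle, and it needs a careful three-$\e$ argument that accounts for the exceptional sets. Given $\e > 0$, pick $j$ with $|f_0(x) - f_j(x)| < \e/3$ for all $x \in \R \setminus F_{\eta}(\Gamma_0)$ for a suitable small $\eta$ (uniform away from $\Gamma_0$); then use that $f_j \in PC_{u,\delta,m,M}(\R)$ to get $\delta_{\e/3} > 0$ with $|f_j(x_1) - f_j(x_2)| < \e/3$ whenever $|x_1 - x_2| < \delta_{\e/3}$ and $x_1, x_2$ lie in the same component of $\R \setminus \Gamma_j$. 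The delicate point is matching the components of $\R \setminus \Gamma_j$ with those of $\R \setminus \Gamma_0$: because $\Gamma_j \to \Gamma_0$ index-by-index with $\Delta x_i \ge m$, for $j$ large and $|x_1 - x_2|$ small enough relative to $m$, two points in the same component of $\R \setminus \Gamma_0$ and away from $\Gamma_0$ are also in the same component of $\R \setminus \Gamma_j$; here one must shrink $\delta$ and restrict to points at distance $> \eta$ from $\Gamma_0$ (which is what $PC_u$ allows, after noting the endpoints behave via one-sided limits). Assembling, $|f_0(x_1) - f_0(x_2)| \le |f_0(x_1) - f_j(x_1)| + |f_j(x_1) - f_j(x_2)| + |f_j(x_2) - f_0(x_2)| < \e$, which gives the uniform continuity on each interval of $\R \setminus \Gamma_0$ with a modulus independent of the interval. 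Once $f_0 \in PC_{u,\delta,m,M}(\R)$ is established, Lemma~\ref{tr-con} (with Remark~\ref{re-f}) shows $\dist\bigl(\ce{^{V_j}_{\Gamma_j}}f_j, \ce{^{V_0}_{\Gamma_0}}f_0\bigr) \to 0$, so the space is complete and therefore $(PC_{u,\delta,m,M}(\R))_{co} = PC_{u,\delta,m,M}(\R)$.
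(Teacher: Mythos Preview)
Your plan attacks the wrong statement. A ``complete point'' means that the hull $\mathrm{H}\bigl(\ce{^{V}_{\Gamma\,}}f\bigr)$ is a complete subspace, not that the ambient space $PC_{u,\delta,m,M}(\R)$ is complete. You attempt the latter, but the paper explicitly leaves that open (Remark~\ref{re-pcu-com2}), and in fact it can fail: take $\Gamma_j=\Z$, $V_j=0$, and on each $(i,i+1)$ set $f_j(x)=\sin\bigl(1/(x-i)\bigr)$ for $x-i\ge 1/j$ and $f_j(x)=\sin j$ for $x-i<1/j$. Each $f_j$ extends continuously to $[i,i+1]$, hence lies in $PC_{u,\delta,1,1}(\R)$; for any $r>0$ and $j,k>1/r$ the functions agree on $\R\setminus F_r(\Z)$, so the sequence is Cauchy in $\dist$; but the pointwise limit $\sin\bigl(1/(x-i)\bigr)$ has no one-sided limit at $i$ and is not uniformly continuous, so it is not in $PC_{u,\delta,1,1}(\R)$.

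The concrete gap in your argument is the uniform-continuity step. You fix $\e$, then a small $\eta$, then a single index $j$ with $|f_0-f_j|<\e/3$ on $\R\setminus F_\eta(\Gamma_0)$, and finally take $\delta_{\e/3}$ from the uniform continuity of that particular $f_j$. This yields the three-$\e$ estimate only on $\R\setminus F_\eta(\Gamma_0)$; to push towards the endpoints of the intervals of $\R\setminus\Gamma_0$ you must shrink $\eta$, which forces a new (larger) $j$, whose modulus $\delta_{\e/3}$ may be strictly smaller. Nothing in the Cauchy hypothesis bounds the moduli of the $f_j$ uniformly in $j$, so no single $\delta$ works on the whole interval. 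The counterexample above exhibits exactly this degeneration: the Lipschitz constant of $f_j$ on $(0,1)$ blows up like $j^2$.

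The paper avoids this by never leaving the hull. Given a Cauchy sequence in $\mathrm{H}\bigl(\ce{^{V}_{\Gamma\,}}f\bigr)$, it first replaces each term by a nearby element of the \emph{orbit}, i.e.\ by a genuine shift $\ce{^{V}_{\Gamma\,}}f\cdot n_k$. All of these share the modulus of uniform continuity of the original $f$, so the family is equicontinuous; that common modulus is what survives in the limit and gives $f_0\in PC_u$ (Claim~4 in the paper's proof). Your argument can be repaired along exactly these lines: start from a Cauchy sequence in the hull, pass to orbit approximants, and use their equicontinuity in the three-$\e$ step.
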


\begin{proof}
It suffices to show that each Cauchy sequence in $\Bigl(\mathrm{H}\Bigl(\ce{^{V}_{\Gamma\,}}f\Bigr),\dist\Bigr)$ converges to a point in $\mathrm{H}\Bigl(\ce{^{V}_{\Gamma\,}}f\Bigr)$. Let $\Bigl\{\ce{^{V_k}_{\Gamma_k}}f_k\Bigr\}_{k\in \N} \subset \mathrm{H}\Bigl(\ce{^{V}_{\Gamma\,}}f\Bigr)$ be a Cauchy sequence. Then for each $\ce{^{V_k}_{\Gamma_k}}f_k$, there exists $\ce{^{V_{n_k}}_{\Gamma_{n_k}}}f_{n_k}:= \ce{^{V}_{\Gamma\,}}f \cdot n_k \in \Orb\Bigl(\ce{^{V}_{\Gamma\,}}f\Bigr)$ such that
    \be \lb{puc-com0}
    \dist\Bigl(\ce{^{V_k}_{\Gamma_k}}f_k, \ce{^{V_{n_k}}_{\Gamma_{n_k}}}f_{n_k}\Bigr)<1/k.
    \ee
Note that by \x{sh-tr}, we have
    \be \lb{puc-com01}
    f_{n_k}= f \cdot x_{n_k}\q \mbox{and}\q \Gamma_{n_k}=\Gamma \cdot n_k,
    \ee
where $\Gamma=\{x_i\}_{i \in \Z}$. It follows from \x{u-d}, \x{con-u} and \x{def-s} that both $\{\Gamma_{n_k}\}$ and $\{V_{n_k}\}$ are Cauchy sequences in $(L_{m,M}(\Z),\dist)$ and $(\ell^{\oo}(\Z),\dist)$, respectively. Due to Lemma \ref{la-com} and Lemma \ref{bs-com}, there exist $\Gamma_0=\{x^0_i\}_{i \in \Z} \in L_{m,M}(\Z)$ and $V_0 \in \ell^{\oo}(\Z)$ such that
    \be \lb{puc-com1}
    \lim_{k \to +\infty}\dist(\Gamma_{n_k},\Gamma_0)=0,
    \ee
and
    \be \lb{puc-com2}
    \lim_{k \to +\infty}\|V_{n_k}-V_0\|_{\infty}=0.
    \ee
Again by \x{u-d}, \x{con-u} and \x{def-s}, we know that for any $\e>0$, there exists $k_{\e} \in \N$ such that
    \be \lb{puc-com3}
    |f_{n_{k_1}}(x)-f_{n_{k_2}}(x)|< \e, \mbox{~for~all~} x\in \R\setminus F_{\e}(\Gamma_{n_{k_1}} \cup \Gamma_{n_{k_2}}) \mbox{~and~} k_1, \ k_2 \ge k_{\e}.
    \ee
We make the following claims.

\medskip
\fbox{Claim 1}\ : for any $x\in  \R \setminus \Gamma_0$, the sequence $\{f_{n_k}(x)\}$ converges to a point in $\R$ denoted by $f_0(x)$. Indeed, let $d:=\min_{i \in \Z}|x-x^0_i|>0$. Without loss of generality, assume that
    \be \lb{puc-com34}
    \e<d.
    \ee
By \x{puc-com1} and \x{puc-com3}, for any $\e>0$, there exists a common $\ti k_{\e} \in \N$ such that
    \be \lb{puc-com4}
    \dist(\Gamma_{n_k},\Gamma_0)<\e/4, \qqf k \ge \ti k_{\e},
    \ee
and
    \be \lb{puc-com5}
    |f_{n_{k_1}}(x)-f_{n_{k_2}}(x)|< \e/4, \mbox{~for~all~} x\in \R\setminus F_{\e/4}(\Gamma_{n_{k_1}} \cup \Gamma_{n_{k_2}}) \mbox{~and~} k_1,\ k_2 \ge \ti k_{\e}.
    \ee
When $x\in  \R \setminus \Gamma_0$ and $k_1,\ k_2 \ge k_{\e}$, by \x{puc-com34} and \x{puc-com4}, we have $x \in \R\setminus F_{\e/4}(\Gamma_{n_{k_1}} \cup \Gamma_{n_{k_2}})$. This implies from \x{puc-com5} that $\{f_{n_k}(x)\}$ is a Cauchy sequence. Thus we have the desired result.

\medskip
\fbox{Claim 2}\ : $f_0(x)$ is bounded on $\R\setminus \Gamma_0$. Indeed, because of \x{puc-com01} and the boundedness of $f$ on $\R\setminus \Gamma$, the family $\{f_{n_k}: \R\setminus \Gamma_{n_k} \to \K \}_{k \in \N}$ is uniformly bounded. Due to the claim above, we have
    \[
    \lim_{k \to +\infty} f_{n_k}(x)= f_0(x), \qqf x \in \R\setminus \Gamma_0.
    \]
This implies the boundedness of $f_0(x)$ on $\R\setminus \Gamma_0$.

\medskip
\fbox{Claim 3}\ : for any $\e>0$, there exists $k_{\e} \in \N$ such that for all $k \ge k_{\e}$, we have
    \be \lb{puc-com8}
    |f_{n_k}(x)-f_0(x)|<\e, \, \qqf x\in \R\setminus F_{\e}(\Gamma_{n_k} \cup \Gamma_0).
    \ee
Indeed, by \x{puc-com4} and \x{puc-com5}, for $k_1,\ k_2 \ge \ti k_{\e}$, we have
    \[
    F_{\e/4}(\Gamma_{n_{k_1}} \cup \Gamma_{n_{k_2}}) \subset F_{\e/2}(\Gamma_0).
    \]
This implies that
    \[
    |f_{n_{k_1}}(x)-f_{n_{k_2}}(x)|< \e/4, \qqf x\in \R\setminus F_{\e/2}(\Gamma_0).
    \]
Let $k_2 \nearrow +\infty$. For $k_1 \ge \ti k_{\e}$, we have
    \be \lb{puc-com6}
    |f_{n_{k_1}}(x)-f_{0}(x)| \le \e/4, \qqf x\in \R\setminus F_{\e/2}(\Gamma_0).
    \ee
Claim \x{puc-com8} is deduced by Remark \ref{re-f} and \x{puc-com6}.

\medskip
\fbox{Claim 4}\ : $f_0(x)$ is uniformly continuous on $\R \setminus \Gamma_0$. Indeed, denote
    \[
    d_k:=\dist(\Gamma_{n_k},\Gamma_0), \q \mbox{and}\q A_{k}:=\R \setminus F_{2d_k}(\Gamma_0).
    \]
By \x{puc-com1}, we know that $d_k$ is monotonically decreasing to zero, and
    \be \lb{puc-com67}
    A_k \subset A_{k+1}, \q \mbox{and}\q \bigcup_{k \in \N} A_k=\R \setminus \Gamma_0.
    \ee
Since $\ce{^{V}_{\Gamma}}f \in PC_{u,\delta,m,M}(\R)$ and $\ce{^{V_{n_k}}_{\Gamma_{n_k}}}f_{n_k} \in \Orb\Bigl(\ce{^{V}_{\Gamma\,}}f\Bigr)$, we know that
    \[
    \mathfrak{F}:=\{(f_{n_k},\Gamma_{n_k})\}_{k \in \N} \subset PC_{u,m.M}(\R)
    \]
is equicontinuous, i.e., for any $\e>0$, there exists a common $\delta_\e$ such that for any $(f_{n_i},\Gamma_{n_i}) \in \mathfrak{F}$, we have
    \bea
    \EM \lb{puc-com69}  |f_{n_i}(x_1)-f_{n_i}(x_2)|<\e/3, \\
    \EM \mbox{for~}|x_1-x_2| <\delta_\e  \mbox{~and~} x_1,\ x_2 \mbox{~belong~to~the~same~interval~from~} \R \setminus \Gamma_{n_i}. \nn
    \eea
Assume that $x_1,\ x_2$ belong to the same interval from $\R \setminus \Gamma_0$. Then by \x{puc-com67} there exists $k_{x_1,x_2} \in \N$ such that $x_1,\ x_2$ that belong to the same interval from $A_k$ for any $k \ge k_{x_1,x_2}$. Restricting $x \in A_{k_{x_1,x_2}}$, we know from \x{puc-com6} that for any $\e>0$, there exists $k_\e \in \N$ such that we have
    \be \lb{puc-com68}
    |f_{n_i}(x)-f_0(x)| < \e/3, \q \mbox{for~}i \ge k_\e.
    \ee
Note that $k_\e$ also depends on the choice of $x_1,\ x_2$. If $|x_1-x_2|<\delta_\e$, then from \x{puc-com69} and \x{puc-com68} it follows that
    \beaa
    \EM |f_0(x_1)-f_0(x_2)| \\
    \LE \Bigl|f_0(x_1)-f_{n_{k_\e}}(x_1)\Bigr|+\Bigl|f_{n_{k_\e}}(x_1)-f_{n_{k_\e}}(x_2)\Bigr|+\Bigl|f_{n_{k_\e}}(x_2)-f_0(x_2)\Bigr| \\
    \AND < \e/3+\e/3+\e/3 = \e.
    \eeaa
This means that the claim is deduced.

Based on this claim, it is easy to conclude that $f_0$ has jump discontinuities at points of $\Gamma_0$. Then we have $\ce{^{V_0}_{\Gamma_0}}f_0 \in PC_{u,\delta,m,M}(\R)$. Due to \x{puc-com1}, \x{puc-com2}, \x{puc-com8} and Lemma \ref{tr-con}, we obtain
    \be \lb{puc-com10}
    \lim_{k \to +\infty}\dist\Bigl(\ce{^{V_{n_k}}_{\Gamma_{n_k}}}f_{n_k},\ce{^{V_0}_{\Gamma_0}}f_0\Bigr) = 0.
    \ee
It follows from \x{puc-com0} and \x{puc-com10} that $\ce{^{V_0}_{\Gamma_0}}f_0 \in \mathrm{H}\Bigl(\ce{^{V}_{\Gamma\,}}f\Bigr)$, and
    \[
    \lim_{k \to +\infty}\dist\Bigl(\ce{^{V_{k}}_{\Gamma_{k}}}f_{k},\ce{^{V_0}_{\Gamma_0}}f_0\Bigr)=0.
    \]
The proof is completed.
\end{proof}

\begin{remark} \lb{re-pcu-com1}
{\rm As a byproduct of Claim 2 in the proof of Lemma \ref{pcu-com}, we may deduce that for all $\ce{^{\ti V}_{\ti \Gamma}}\ti f \in \mathrm{H}\Bigl(\ce{^{V}_{\Gamma}}f\Bigr)$, there exists $B>0$ such that
    \be \lb{unbd}
    \disp \|\ti f(x)\|_{\infty}+\|\ti V\|_{\infty} \le B.
    \ee
}
\end{remark}

\begin{remark} \lb{re-pcu-com2}
{\rm In the proof of Claim 3 in the proof of Lemma \ref{pcu-com}, the equicontinuity of $\mathfrak{F}$ is crucial. A fundamental question is whether the uniform space $(PC_{u,\delta,m,M}(\R),\dist)$ is a complete metric space. We leave it to the reader. However each uniform space is uniformly isomorphic to a dense subspace of a complete uniform space, then we may make a completion of a uniform space.
}
\end{remark}

In case there is no $\delta$-interaction, we denote by $PC_{u,0}(\R)$ the subspace of $PC_{u}(\R)$ consisting of all functions with jump discontinuities at points of $\Gamma \in L_0(\Z)$. Similarly $PC_{u,m,M}(\R)$ denotes the subspace of $PC_{u,0}(\R)$ with jump discontinuities at $\Gamma \in L_{m,M}(\Z)$. Then we have
    \[
    \disp PC_{u,0}(\R) = \bigcup_{0 < m \le M < \oo} PC_{u,m,M}(\R).
    \]
\begin{remark} \lb{rm-pcu}
{\rm All results above can be established for $PC_{u,m,M}(\R)$ in a similar way. In detail, removing the effect of $\delta$-interactions in \x{def-s}, we first construct a family still denoted by $\mathscr{U}:=\{U_n\}_{n \in \N_0}$ of subsets of the product space $PC_{u,m,M}(\R) \times PC_{u,m,M}(\R)$. Then the uniform space $(PC_{u,m,M}(\R),\mathscr{T})$ is metrizable where the topology $\mathscr{T}$ is generated from $\mathscr{U}$. Moreover we have
    \[
    \Bigl(PC_{u,m,M}(\R)\Bigr)_{ec}=PC_{u,m,M}(\R), \mbox{~and~}\Bigl(PC_{u,m,M}(\R)\Bigr)_{co}=PC_{u,m,M}(\R).
    \]
}
\end{remark}

Before introducing the main concept in this section, we recall the class of Bohr almost periodic functions.

\begin{definition} \cite{Fi74,DZ} \label{ap-f}
We say that $f \in C_b(\R)$ is Bohr almost periodic if one of the following conditions holds:
\begin{itemize}
\smallskip
\item[{\romannumeral1}):]{\rm [Bohr's definition]} for any $\eps > 0$, $\mathrm{P}_\R(f, \eps) := \left\{ \tau \in \R: \|f \cdot \tau-f\|_{\infty} < \eps \right\}$
is relatively dense in $\R$;
\smallskip
\item[{\romannumeral2}):] the hull of $f$ running over $\R$, defined by
		\be \label{hl-f}
		\mathrm{H}_{\R}(f) := \ol{\{f \cdot \tau : \tau \in \R \}}^{(C_b(\R),\|\cdot\|_{\infty})}, \nn
		\ee
		is a compact subset in $C_b(\R)$;
\smallskip		
\item[{\romannumeral3}):]{\rm [Bochner's definition]} for any sequence $\{ \ti{\tau}_k \}\subset \R$, one can extract a subsequence $\{ \tau_k \} \subset \{\ti{\tau}_k\}$ such that $\{f \cdot \tau_k\}$ is convergent in $(C_b(\R),\|\cdot\|_{\infty})$, i.e., $\{f \cdot \tau : \tau \in \R \}$ is relatively compact.
\end{itemize}
\end{definition}

\begin{remark} \lb{re-aa}
{\rm  The equivalence between conditions {\rm {\romannumeral1}) and {\romannumeral3})} in {\rm Definition \ref{ap-f}} may be regarded as the Arzel\`{a}-Ascoli theorem for Bohr almost periodic functions. That is, the family of functions $\{f \cdot \tau : \tau \in \R \}$ is relatively compact in $(C_b(\R),\|\cdot\|_{\infty})$ if and only if $\mathrm{P}_{\R}(f,\eps)$ is relatively dense for any $\e >0$. Note that the condition of relative denseness implies that $f \in C_u(\R)$. The classical Arzel\`{a}-Ascoli theorem requires that the domain of the functions is a compact Hausdorff space. Here the condition of relative denseness is to compensate for the non-compactness of $\R$.
}
\end{remark}

The difference between this definition and Definition \ref{un-ap} is that the parameter $\tau$ in this definition is required to run over the real axis. We denote the space of all Bohr almost periodic functions by $C_{ap}(\R)$. It is well known that $(C_{ap}(\R),\| \cdot \|_{\infty})$ is a Banach algebra {\rm \cite{Fi74}}. By Definition \ref{un-ap}, Remark \ref{re-co}, Lemma \ref{tr-tau} and Lemma \ref{pcu-com}, we are now in a position to introduce almost periodic functions with jump discontinuities and $\delta$-interactions.

\begin{definition} \label{ap}
$\ce{^{V}_{\Gamma\,}}f \in PC_{u,\delta,m,M}(\R)$ is called an almost periodic function with jump discontinuities and $\delta$-interactions if one of the following conditions holds:
\begin{itemize}
\smallskip		
\item[{\romannumeral1}):]{\rm [Bohr-type definition]}  for any $\e>0$, $\mathrm{P}\Bigl(\ce{^{V}_{\Gamma\,}}f, \eps\Bigr) := \left\{ \tau \in \Z: \dist\Bigl(\ce{^{V}_{\Gamma\,}}f \cdot \tau, \ce{^{V}_{\Gamma\,}}f\Bigr) < \eps \right\}$ is relatively dense in $\Z$, where $\dist$ is the metric introduced in Lemma {\rm\ref{di-pcud}};
\smallskip		
\item[{\romannumeral2}):] the hull of $\ce{^{V}_{\Gamma\,}}f$, defined by
		\be \label{hull}
		\mathrm{H}\Bigl(\ce{^{V}_{\Gamma\,}}f\Bigr)= \ol{\Bigl\{\ce{^{V}_{\Gamma\,}}f \cdot k : k \in \Z \Bigr\}}^{(PC_{u,\delta,m,M}(\R),\dist)} \nn
		\ee
		is compact;
\smallskip				
\item[{\romannumeral3}):]{\rm [Bochner-type definition]}  for any sequence $\{\ti \tau_k \}_{k \in \N}\subset \Z$, one can extract a subsequence $\{\tau_k\} \subset \{\ti \tau_k\}$ such that $\Bigl\{\ce{^{V}_{\Gamma\,}}f \cdot \tau_k\Bigr\}$ is convergent in $(PC_{u,\delta,m,M}(\R),\dist)$.
\end{itemize}
\end{definition}

We denote the space of all almost periodic functions with jump discontinuities and $\delta$-interactions at points of $\Gamma \in L_{m,M}(\Z)$ by $PC_{\delta,m,M,ap}(\R)$. As we stated in Remark \ref{rm-pcu}, Definition \ref{un-ap} can also give the characterization of \emph{almost periodic functions with jump discontinuities}. Denote the space of all almost periodic functions with only jump discontinuities at points of $\Gamma \in L_{m,M}(\Z)$ by $PC_{m,M,ap}(\R)$. An example of $PC_{m,M,ap}(\R)$ is
    \be \lb{def-f}
    f|_{(x_i,x_{i+1})}=u_i,
    \ee
where $\Gamma=\{x_i\} \in L_{m,M,ap}(\Z)$ and $\{u_i\} \in \ell_{ap}(\Z)$. An example of $PC_{\delta,m,M,ap}(\R)$ is
    \[
    \ce{^{1}_{\Gamma}}f=\ce{^{0}_{\Gamma}}f+\delta_{\Gamma},
    \]
where $\ce{^{0}_{\Gamma}}f$ is given by \x{def-f}.

Introduce the following notation:
    \be \lb{pcd0ap}
    \disp PC_{\delta,0,ap}(\R) := \bigcup_{0 < m \le M < \oo} PC_{\delta,m,M,ap}(\R),
    \ee
and
    \[
    PC_{0,ap}(\R) := \bigcup_{0 < m \le M < \oo} PC_{m,M,ap}(\R).
    \]

\begin{lemma} \lb{lm-pcdap} We have:
\begin{itemize}
\smallskip
\item[{\romannumeral1}):] $PC_{m,M,ap}(\R) \subset PC_{\delta,m,M,ap}(\R)$; $PC_{0,ap}(\R) \subset PC_{\delta,0,ap}(\R)$;
\smallskip
\item[{\romannumeral2}):] $C_{ap}(\R) \subset PC_{0,ap}(\R) \cap C(\R)$; and
\smallskip
\item[{\romannumeral3}):] if $\ce{^{V}_{\Gamma\,}}f \in PC_{\delta,m,M,ap}(\R)$, then $\ce{^{0}_{\Gamma}}f \in PC_{m,M,ap}(\R)$, $V \in \ell_{ap}(\Z)$, and $\Gamma \in  L_{m,M,ap}(\Z)$.
\end{itemize}
\end{lemma}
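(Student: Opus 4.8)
The plan is to treat the three items in turn; in each case I reduce to the Bochner-type characterization of almost periodicity (Definition~\ref{ap}~(iii), together with its counterpart from Remark~\ref{rm-pcu} for the $\delta$-free spaces) and then transfer convergence of integer-shifted orbits between the relevant metric spaces. \emph{(i).} View $PC_{u,m,M}(\R)$ as the slice $V\equiv 0$ of $PC_{u,\delta,m,M}(\R)$ through the injection $(f,\Gamma)\mapsto \ce{^{0}_{\Gamma}}f$. In the sets $S_r$ of \eqref{def-s} the clause $\|V_1-V_2\|_\infty<r$ is vacuous when $V_1=V_2=0$, so the trace of $S_r$ on this slice is exactly the entourage used in Remark~\ref{rm-pcu} to topologize $PC_{u,m,M}(\R)$; hence the injection is a uniform isomorphism onto its image, which is moreover closed (by Lemma~\ref{tr-con}, $V_k\equiv 0$ forces $V_0=0$). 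By \eqref{sh-tr} it also intertwines the shifts, $\ce{^{0}_{\Gamma}}f\cdot\tau=\ce{^{0}_{\Gamma\cdot\tau}}(f\cdot x_\tau)$, so $\Orb(\ce{^{0}_{\Gamma}}f)$ is relatively compact in $PC_{u,\delta,m,M}(\R)$ if and only if $\Orb(f,\Gamma)$ is relatively compact in $PC_{u,m,M}(\R)$; by the Bochner-type criterion this gives $PC_{m,M,ap}(\R)\subset PC_{\delta,m,M,ap}(\R)$, and a union over $0<m\le M<\oo$ (cf.~\eqref{pcd0ap}) yields $PC_{0,ap}(\R)\subset PC_{\delta,0,ap}(\R)$.

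\emph{(ii).} A Bohr almost periodic function $f$ is bounded, continuous and uniformly continuous, so in particular $f\in C(\R)$, and it remains only to place $f$ in $PC_{0,ap}(\R)$. Choose $\Gamma=\Z\in L_{1,1}(\Z)$, so that $x_i=i$ and $\Delta x_i\equiv 1$; since $f$ is continuous and uniformly continuous, $\ce{^{0}_{\Z}}f\in PC_{u,1,1}(\R)$, with only trivial jumps at $\Z$. For any $\{\ti \tau_k\}\subset\Z$, Bochner's form of Bohr almost periodicity (Definition~\ref{ap-f}~(iii), restricted to integer shifts) furnishes a subsequence $\{\tau_k\}$ with $f\cdot\tau_k\to g$ in $(C_b(\R),\|\cdot\|_\infty)$; since $\Z\cdot\tau_k=\Z$ and $x_{\tau_k}=\tau_k$, we have $\ce{^{0}_{\Z}}f\cdot\tau_k=\ce{^{0}_{\Z}}(f\cdot\tau_k)$, and uniform convergence of $f\cdot\tau_k$ with the underlying point set held fixed yields, through the convergence criterion of Remark~\ref{rm-pcu}, that $\ce{^{0}_{\Z}}f\cdot\tau_k$ converges in $(PC_{u,1,1}(\R),\dist)$. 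Hence $\ce{^{0}_{\Z}}f\in PC_{1,1,ap}(\R)\subset PC_{0,ap}(\R)$.

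\emph{(iii).} Let $\ce{^{V}_{\Gamma}}f\in PC_{\delta,m,M,ap}(\R)$ and let $\{\ti \tau_k\}\subset\Z$ be arbitrary. By Definition~\ref{ap}~(iii) there is a subsequence $\{\tau_k\}$ with $\ce{^{V}_{\Gamma}}f\cdot\tau_k\to\ce{^{V_0}_{\Gamma_0}}f_0$ in $(PC_{u,\delta,m,M}(\R),\dist)$, the limit lying in that space because each of its points is complete (Lemma~\ref{pcu-com}). Writing the shift as $\ce{^{V\cdot\tau_k}_{\Gamma\cdot\tau_k}}(f\cdot x_{\tau_k})$ via \eqref{sh-tr} and applying the convergence criterion of Lemma~\ref{tr-con}, this convergence decouples into $\dist(\Gamma\cdot\tau_k,\Gamma_0)\to 0$, $\|V\cdot\tau_k-V_0\|_\infty\to 0$, and $|f\cdot x_{\tau_k}(x)-f_0(x)|\to 0$ uniformly for $x\in\R\setminus F_\e(\Gamma\cdot\tau_k\cup\Gamma_0)$. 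The first two convergences, via Definition~\ref{un-ap}~(iii), give $\Gamma\in L_{m,M,ap}(\Z)$ and $V\in\ell_{ap}(\Z)$. For the last, note $\ce{^{0}_{\Gamma}}f\cdot\tau_k=\ce{^{0}_{\Gamma\cdot\tau_k}}(f\cdot x_{\tau_k})$ and $\ce{^{0}_{\Gamma_0}}f_0\in PC_{u,m,M}(\R)$; the convergence of $\Gamma\cdot\tau_k$ and of $f\cdot x_{\tau_k}$ are exactly the hypotheses of the convergence criterion for $PC_{u,m,M}(\R)$ (Remark~\ref{rm-pcu}, using Remark~\ref{re-f} if convenient), so $\ce{^{0}_{\Gamma}}f\cdot\tau_k$ converges in $(PC_{u,m,M}(\R),\dist)$, whence $\ce{^{0}_{\Gamma}}f\in PC_{m,M,ap}(\R)$.

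The main obstacle is the decoupling in (iii): one must check that convergence in the composite metric of $PC_{u,\delta,m,M}(\R)$ genuinely splits into separate convergence of the point set, the bi-sequence, and the function away from the point set, and that the last of these survives the return to the $\delta$-free space even though the excluded neighborhoods $F_\e(\cdot)$ move with $\tau_k$. This is precisely what Lemma~\ref{tr-con}, Remark~\ref{re-f} and the completeness supplied by Lemma~\ref{pcu-com} are designed to handle, so the argument is routine once those are granted; the only further inputs are the elementary identities $\Z\cdot\tau=\Z$, $x_\tau=\tau$ used in (ii) and the triviality of the $V\equiv 0$ slice used in (i).
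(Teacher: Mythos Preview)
Your proof is correct and follows essentially the same line as the paper's: in each item you reduce to one of the equivalent characterizations of almost periodicity and then exploit the fact that the entourages \eqref{def-s} decouple into separate conditions on $\Gamma$, $V$, and $f$. The only notable difference is in part~(iii): you use the Bochner-type criterion uniformly (extracting a convergent subsequence in $PC_{u,\delta,m,M}(\R)$ and reading off convergence of each component via Lemma~\ref{tr-con}), whereas the paper handles $\ce{^{0}_{\Gamma}}f$ via the Bohr-type criterion, observing directly from \eqref{u-d}, \eqref{con-u}, \eqref{def-s} that $\mathrm{P}\bigl(\ce{^{V}_{\Gamma}}f,\eps\bigr)\subset\mathrm{P}\bigl(\ce{^{0}_{\Gamma}}f,\eps\bigr)$ (up to the usual factor coming from the metrization), and only switches to Bochner for $V$ and $\Gamma$. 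The paper's route for the function part is a one-line inclusion of $\eps$-period sets and avoids naming the limit, while your route is more uniform in style and makes the decoupling of all three components explicit; both rest on the same structural observation and neither requires anything beyond what is already in Section~\ref{se-apf}.
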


\begin{proof} \fbox{{\rm {\romannumeral1})}}\ : Because we may regard $(f,\Gamma)\in PC_{u,m,M}(\R)$ as an element in $PC_{u,\delta,m,M}(\R)$ with no $\delta$-interaction, {\romannumeral1}) is obvious.

\fbox{{\rm {\romannumeral2})}}\ : We assume that $f\in C_{ap}(\R)$. By Definition \ref{ap-f} {\romannumeral2}), we know that
    \[
    \mathrm{H}(f)= \ol{\{f \cdot \tau : \tau \in \Z \}}^{(C_u(\R),\|\cdot\|_{\infty})} \subset \mathrm{H}_{\R}(f)
    \]
is compact in $(C_u(\R),\|\cdot\|_{\infty})$. We regard $f$ as an element in $PC_{u,m,M}(\R)$ with no jump discontinuities and no $\delta$-interaction. Because
    \[
    (C_u(\R),\|\cdot\|_{\infty}) \hookrightarrow (PC_{u,m,M}(\R),\dist),
    \]
we have $f \in PC_{0,ap}(\R)$. The proof of {\romannumeral2}) is completed.

\fbox{{\rm {\romannumeral3})}}\ : We assume that $\ce{^{V}_{\Gamma\,}}f \in PC_{\delta,m,M,ap}(\R)$. Due to \x{u-d}, \x{con-u} and \x{def-s}, we have
    \[
    \mathrm{P}\Bigl(\ce{^{V}_{\Gamma\,}}f, \eps\Bigr) \subset \mathrm{P}\Bigl(\ce{^{0}_{\Gamma}}f, \eps\Bigr).
    \]
This implies that $\mathrm{P}\Bigl(\ce{^{0}_{\Gamma}}f, \eps\Bigr)$ is relatively dense for any $\e>0$. Thus $\ce{^{0}_{\Gamma}}f \in PC_{m,M,ap}(\R)$. Meanwhile, by Definition \ref{ap} {\romannumeral3}), Definition \ref{un-ap} {\romannumeral3}) and Lemma \ref{tr-con}, we deduce that $V \in \ell_{ap}(\Z)$ and $\Gamma \in  L_{m,M,ap}(\Z)$.  The proof of {\romannumeral3}) is completed.
\end{proof}

\begin{remark} \lb{rm-pcdap}
{\rm A natural question is whether $(PC_{\delta,m,M,ap}(\R),\dist)$ is a complete metric space. We leave it to the interested reader. Note that in case that $(PC_{u,\delta,m,M}(\R),\dist)$ is complete, we would obtain this result by Lemma \ref{ap-com}.
}
\end{remark}

%
%
%
%
%

\subsection{Mean Value}

We use Lemma \ref{atg} to introduce the \emph{mean value} of $\ce{^{V}_{\Gamma\,}}f \in PC_{\delta,0,ap}(\R)$. 

\begin{lemma} \label{mv-pcdap}
Let $\ce{^{V}_{\Gamma\,}}f \in PC_{\delta,0,ap}(\R)$. Then the limit
    \be \lb{mv-f0}
    \mathrm{M}\Bigl(\ce{^{V}_{\Gamma\,}}f\Bigr) := \lim_{z_2-z_1 \to +\oo} \frac{1}{z_2-z_1} \int_{[z_1,z_2)} \Bigl(f(x)+\sum_{i \in \Z} v_i \delta(x - x_i)\Bigr)\rd x \in \C
    \ee
exists uniformly for all $z_1,\ z_2 \in \R$.  We call it the mean value of $\prescript{V}{\Gamma\,}f$.
\end{lemma}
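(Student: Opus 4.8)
The plan is to decompose the integral in \x{mv-f0} into its absolutely continuous part and its pure point part, and to handle each by an already-established mean value result. Writing
$$
\frac{1}{z_2-z_1}\int_{[z_1,z_2)}\Bigl(f(x)+\sum_{i\in\Z}v_i\delta(x-x_i)\Bigr)\rd x = \frac{1}{z_2-z_1}\int_{z_1}^{z_2}f(x)\,\rd x + \frac{1}{z_2-z_1}\sum_{x_i\in[z_1,z_2)}v_i,
$$
it suffices to prove that each of the two terms on the right converges as $z_2-z_1\to+\oo$, uniformly in the position of the window $[z_1,z_2)$. By Lemma \ref{lm-pcdap} {\romannumeral3}), from $\ce{^{V}_{\Gamma\,}}f\in PC_{\delta,0,ap}(\R)$ we already know $\ce{^{0}_{\Gamma}}f\in PC_{0,ap}(\R)$, $V\in\ell_{ap}(\Z)$, and $\Gamma\in L_{m,M,ap}(\Z)$ for suitable $0<m\le M<\oo$; these are precisely the hypotheses under which the component mean value statements are available.

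The pure point term is the easier one. First I would note that, by Lemma \ref{den-l}, the density $[\Gamma]$ exists and the counting function $\#(\Gamma\cap[z_1,z_2))$ grows like $[\Gamma](z_2-z_1)$ uniformly in the window. To get convergence of the average of $v_i$ over $x_i\in[z_1,z_2)$, I would re-index: the indices $i$ with $x_i\in[z_1,z_2)$ form a consecutive block $\{n_1,n_1+1,\dots,n_2-1\}$ in $\Z$, whose length $n_2-n_1 = \#(\Gamma\cap[z_1,z_2))$ tends to $+\oo$ as $z_2-z_1\to+\oo$ (using $\Delta x_i\in[m,M]$). Then $\frac{1}{z_2-z_1}\sum_{x_i\in[z_1,z_2)}v_i = \frac{n_2-n_1}{z_2-z_1}\cdot\frac{1}{n_2-n_1}\sum_{\tau=n_1}^{n_2-1}v_\tau$, and the first factor converges to $[\Gamma]$ by Lemma \ref{den-l} while the second converges to $\mathrm{M}(V)$ by Lemma \ref{lap-mv}; uniformity in the window follows from the uniformity in both cited lemmas. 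The limiting contribution of this term is $[\Gamma]\,\mathrm{M}(V)$.

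The absolutely continuous term requires a passage from a discrete Birkhoff average (which is what Lemma \ref{atg} {\romannumeral4}) and the hull machinery directly provide) to a continuous one. I would apply Lemma \ref{atg} to $y:=\ce{^{0}_{\Gamma}}f\in PC_{0,ap}(\R)=\bigl(PC_{m,M}(\R)\bigr)_{ap}$ with the continuous functional $F\colon \mathrm{H}(y)\to\K$, $F\bigl(\ce{^{\ti 0}_{\ti\Gamma}}\ti f\bigr):=\int_{0}^{\ti x_1}\ti f(x)\,\rd x$ (integral over the first continuity cell, which depends continuously on the hull element because of the uniform topology of Section~\ref{se-apf} and the uniform bound from Remark \ref{re-pcu-com1}); summing $F$ over the orbit telescopes to $\int_{0}^{x_N}f$, so the discrete average $\frac1N\sum_{\tau=0}^{N-1}F(y\cdot\tau)$ equals $\frac1N\int_0^{x_N}f$. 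Comparing $\frac{1}{z_2-z_1}\int_{z_1}^{z_2}f$ to such a normalized partial sum, the error from the at most two incomplete cells at the endpoints is $O(M\cdot\|f\|_\infty/(z_2-z_1))\to 0$ uniformly, and $x_N/(z_2-z_1)\to 1/[\Gamma]$ uniformly by Lemma \ref{den-l}; hence the continuous average converges, uniformly in the window, to $[\Gamma]\int_{\mathrm{H}(y)}F\,\rd\nu_y$.

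The main obstacle I anticipate is not the algebra of these reductions but the verification that the functional $F$ above — equivalently, "integrate $\ti f$ over the first continuity interval $(0,\ti x_1)$" — is genuinely continuous on the hull $\mathrm{H}\bigl(\ce{^{V}_{\Gamma\,}}f\bigr)$ in the metric $\dist$ of Lemma \ref{di-pcud}. The subtlety is that $\dist$ controls $|f_1-f_2|$ only off the neighborhood $F_\e(\Gamma_1\cup\Gamma_2)$ of the discontinuity sets, so near the moving endpoints $0$ and $\ti x_1$ one only has the uniform bound $B$ of Remark \ref{re-pcu-com1}, not smallness; one must check that the excised $\e$-collars contribute at most $O(\e B)$ to the integral, which is fine since their total length is $O(\e)$, and that $\ti x_1$ itself varies continuously with the hull element (true because $\dist(\Gamma_1,\Gamma_2)<m/2$ forces $\sup_i|x_i^1-x_i^2| = \dist(\Gamma_1,\Gamma_2)$ by Lemma \ref{dist-l} {\romannumeral2})). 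Once continuity of $F$ is secured, everything else is bookkeeping with the uniform limits already furnished by Lemmas \ref{atg}, \ref{lap-mv} and \ref{den-l}, and the stated value of $\mathrm{M}\bigl(\ce{^{V}_{\Gamma\,}}f\bigr)$ is $[\Gamma]\bigl(\int_{\mathrm{H}(y)}F\,\rd\nu_y+\mathrm{M}(V)\bigr)$.
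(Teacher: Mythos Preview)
Your proposal is correct and follows essentially the same route as the paper: reduce the window endpoints to points of $\Gamma$ (boundary cells contribute $O(1)$), define the cell-integral functional $F$ on the hull, verify its continuity via the $\e$-collar argument you describe, and invoke the unique ergodicity of Lemma~\ref{atg}~{\romannumeral4}) together with Lemma~\ref{den-l}. The only organizational difference is that the paper bundles the $\delta$-part into $F$ by setting $F\bigl(\ce{^{\ti V}_{\ti\Gamma}}\ti f\bigr)=\int_{[0,\ti x_1)}\ti f+\ti v_0$ on the full hull $\mathrm{H}\bigl(\ce{^{V}_{\Gamma}}f\bigr)$, whereas you split off the pure point sum first and treat it separately via Lemmas~\ref{lap-mv} and~\ref{den-l}; your decomposition is exactly the content of the paper's subsequent remark \x{mv-dec}.
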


\begin{proof}
By \x{pcd0ap}, there exist $m, \ M >0$ such that $\ce{^{V}_{\Gamma\,}}f \in PC_{\delta,m,M,ap}(\R)$. Denote $\Gamma=\{x_i\}_{i \in \Z} \in L_{m,M}(\Z)$. Then for any $z_1, \ z_2 \in \R$, there exist $n_1, \ n_2 \in \Z$ such that
    \be \lb{mv-f1}
    x_{n_1} \le z_1 < x_{n_1+1} \mbox{~and~} x_{n_2} \le z_2 < x_{n_2+1}.
    \ee
We make the following claims.

\medskip
\fbox{Claim 1}\ : the following relation holds:
    \bea
    \EM \lim_{z_2-z_1 \to +\oo} \frac{1}{z_2-z_1} \int_{[z_1,z_2)} \Bigl(f(x)+\sum_{i \in \Z} v_i \delta(x - x_i)\Bigr)\rd x  \nn \\
    \EQ \lb{mv-f2} \lim_{n_2-n_1 \to +\oo} \frac{1}{x_{n_2}-x_{n_1}} \int_{[x_{n_1},x_{n_2})} \Bigl(f(x)+\sum_{i \in \Z} v_i \delta(x - x_i)\Bigr)\rd x.
    \eea
That is, if one of limits exists, then the other one exists as well and they are equal. Indeed, suppose that there exists $B>0$ such that
    \be \lb{mv-f3}
    \disp \sup_{x \in \R \setminus \Gamma}|f(x)|+\|V\|_{\infty}=\|f(x)\|_{\infty}+\|V\|_{\infty} \le B.
    \ee
Then by \x{mv-f1}, \x{mv-f3} and \x{lmm}, we have
    \beaa
    \EM \Bigl|\int_{[z_1,z_2)}\ce{^{V}_{\Gamma}}f(x) \rd x-\int_{[x_{n_1},x_{n_2})}\ce{^{V}_{\Gamma}}f(x) \rd x\Bigr|\\
    \LE \Bigl|\int_{[x_{n_2},z_2)}\ce{^{V}_{\Gamma}}f(x) \rd x\Bigr|+\Bigl|\int_{[x_{n_1},z_1)}\ce{^{V}_{\Gamma}}f(x) \rd x\Bigr|\\
    \LE 2(M+1)B <+\infty.
    \eeaa
It follows that
    \beaa
    \EM \lim_{z_2-z_1 \to +\oo} \frac{1}{z_2-z_1} \int_{[z_1,z_2)}\Bigl(f(x)+\sum_{i \in \Z} v_i \delta(x - x_i)\Bigr)\rd x \\
    \EQ \lim_{z_2-z_1 \to +\oo} \frac{x_{n_2}-x_{n_1}}{z_2-z_1} \frac{\disp\int_{[z_1,z_2)}\ce{^{V}_{\Gamma}}f(x) \rd x-\int_{[x_{n_1},x_{n_2})}\ce{^{V}_{\Gamma}}f(x) \rd x+ \int_{[x_{n_1},x_{n_2})}\ce{^{V}_{\Gamma}}f(x) \rd x}{x_{n_2}-x_{n_1}} \\
    \EQ \lim_{n_2-n_1 \to +\oo} \frac{1}{x_{n_2}-x_{n_1}} \int_{[x_{n_1},x_{n_2})}\Bigl(f(x)+\sum_{i \in \Z} v_i \delta(x - x_i)\Bigr)\rd x,
    \eeaa
provided one of limits exists. The claim \x{mv-f2} is deduced.

\medskip
\fbox{Claim 2}\ : introduce the function $F: \mathrm{H}\Bigl(\ce{^{V}_{\Gamma\,}}f\Bigr) \to \K$ by
    \[
    F\Bigl(\ce{^{\ti V}_{\ti \Gamma\,}}\ti f\Bigr):=\int_{[0,\ti x_1)}\Bigl(\ti f(x)+\sum_{i \in \Z} \ti v_i \delta(x - \ti x_i)\Bigr)\rd x, \q \mbox{for~}\ce{^{\ti V}_{\ti\Gamma\,}}\ti f \in \mathrm{H}\Bigl(\ce{^{V}_{\Gamma\,}}f\Bigr).
    \]
We assert that $F$ is continuous. Indeed, assume that
    \be \lb{mv-f4}
    \disp \lim_{k \to +\infty}\dist\Bigl(\ce{^{V_k}_{\Gamma_k}}f_k,\ce{^{V_0}_{\Gamma_0}}f_0\Bigr)= 0,
    \ee
where $\ce{^{V_k}_{\Gamma_k}}f_k \in \mathrm{H}\Bigl(\ce{^{V}_{\Gamma\,}}f\Bigr)$, $k \in \N_0$. By a direct computation, we have
    \be \lb{mv-f45}
    F\Bigl(\ce{^{\ti V}_{\ti \Gamma\,}}\ti f\Bigr)=\int_{[0,\ti x_1)}\ti f(x) \rd x +\ti v_0, \q \mbox{for~}\ce{^{\ti V}_{\ti\Gamma\,}}\ti f \in \mathrm{H}\Bigl(\ce{^{V}_{\Gamma\,}}f\Bigr).
    \ee
Denote $\Gamma_k=\{x^k_i\}_{i \in \Z} \in L_{m,M}(\Z)$ and $V_k=\{v^k_i\}_{i \in \Z} \in \ell^{\infty}(\Z)$. For any $\e>0$, denote
    \be \lb{mv-f5}
    \epsilon:= \frac{\e}{4B+M+1},
    \ee
where $B$ and $M$ are introduced in \x{unbd} and \x{lmm}, respectively. It follows from \x{mv-f4}, Lemma \ref{tr-con}, Remark \ref{re-f}, Lemma \ref{con-l} and \x{dist-v} that there exists $k_\epsilon \in \N$ such that when $k \ge k_\epsilon$, we have
    \be \lb{mv-f6}
    |v^k_0-v^0_0|< \epsilon,\  |x^k_1-x^0_1|< \epsilon,
    \ee
and
    \be \lb{mv-f7}
    |f_k(x)-f_0(x)|< \epsilon, \q \mbox{for~} x \in \R \setminus F_{\epsilon}(\Gamma_0).
    \ee
Then we obtain
    \beaa
    \EM \Bigl |F\Bigl(\ce{^{V_k}_{\Gamma_k}}f_k\Bigr) - F\Bigl(\ce{^{V_0}_{\Gamma_0}}f_0\Bigr)\Bigr|\\
    \LE  \int_{[0,\epsilon]}|f_k(x)-f_0(x)| \rd x + \int_{(\epsilon,x^0_1-\epsilon)}|f_k(x)-f_0(x)| \rd x\\\
    \EM + \Bigl|\int_{[x^0_1-\epsilon,x^k_1)}f_k(x)\rd x- \int_{[x^0_1-\epsilon,x^0_1)}f_0(y)\rd y\Bigr|+ |v^k_0-v^0_0| \\
    \AND{<} 2B\epsilon+M\epsilon+2B\epsilon+\epsilon=\e,
    \eeaa
where \x{mv-f45}, \x{unbd}, \x{mv-f7}, \x{mv-f6} and \x{mv-f5} are used. The claim is deduced.

\medskip
\fbox{Claim 3}\ : the following relation holds:
    \bea
    \EM \lim_{n_2-n_1 \to +\oo} \frac{1}{x_{n_2}-x_{n_1}} \int_{[x_{n_1},x_{n_2})} \Bigl(f(x)+\sum_{i \in \Z} v_i \delta(x - x_i)\Bigr)\rd x  \nn \\
    \EQ \lb{mv-f8} [\Gamma] \lim_{n_2-n_1 \to +\infty} \frac{1}{n_2-n_1} \sum_{\tau=n_1}^{n_2-1} F\Bigl(\ce{^{V}_{\Gamma\,}}f \cdot \tau\Bigr),
    \eea
where $[\Gamma]$ is introduced in Lemma \ref{den-l}. That is, if one of the limits exists, then the other one exists as well and they are equal. Indeed, we have
    \beaa
    \EM  \int_{[x_{n_1},x_{n_2})} \Bigl(f(x)+\sum_{i \in \Z} v_i \delta(x - x_i)\Bigr)\rd x\\
    \EQ  \sum_{\tau=n_1}^{n_2-1}\int_{[x_\tau,x_{\tau+1})} \Bigl(f(x)+\sum_{i \in \Z} v_i \delta(x - x_i)\Bigr)\rd x \\
    \EQ  \sum_{\tau=n_1}^{n_2-1}\Bigl(\int_{[x_\tau,x_{\tau+1})} f(x)\rd x + v_\tau\Bigr) \\
    \EQ  \sum_{\tau=n_1}^{n_2-1}F\Bigl(\ce{^{V}_{\Gamma\,}}f \cdot \tau \Bigr),
    \eeaa
where \x{sh-tr} and \x{mv-f45} are used. The relation \x{mv-f8} is deduced by Lemma \ref{den-l}.

The uniform convergence of the limit \x{mv-f0} is obtained by Lemma \ref{atg} {\romannumeral4}) and Remark~\ref{atg-co}.
\end{proof}

\begin{remark}
{\rm
If there are no $\delta$-interactions, then the mean value of $\prescript{0}{\Gamma}f \in PC_{0,ap}(\R)$ may be defined by
    \[
    \mathrm{M}\Bigl(\ce{^{0}_{\Gamma}}f\Bigr) := \lim_{z_2-z_1 \to +\oo} \frac{1}{z_2-z_1} \int_{[z_1,z_2)} f(x)\rd x.
    \]
Note that the value of the integral is the same if $[z_1,z_2)$ is replaced by $[z_1,z_2]$.
}
\end{remark}

\begin{remark}
{\rm For $\prescript{V}{\Gamma\,}f \in PC_{\delta,0,ap}(\R)$, we have a decomposition formula:
    \be \lb{mv-dec}
    \mathrm{M}\Bigl(\ce{^{V}_{\Gamma\,}}f\Bigr)=\mathrm{M}\Bigl(\ce{^{0}_{\Gamma}}f\Bigr)+[\Gamma]\mathrm{M}(V),
    \ee
where Lemma \ref{lm-pcdap} {\romannumeral3}), Lemma \ref{lap-mv} and Lemma \ref{den-l} are used.}
\end{remark}

\medskip
%
%
%
%
%

\section{Reduction to Skew-Product Dynamical Systems}\label{sec.4}

For autonomous ODEs, the family of solutions with different initial values generates a flow due to the existence and uniqueness of solutions of ODEs. For non-autonomous ODEs, when the hull of the vector field of ODEs is involved, we may construct a skew-product flow provided the existence and uniqueness of solutions of ODEs hold as well; see \cite{Se68, SY98} for the detailed idea. In this section, we will use this idea to construct a skew-product dynamical system from \x{sys}. There are two crucial issues in this setting. One is to overcome the difficulty that is caused by impulses at points of $\Gamma$, and the other one is to show that the dynamical system is continuous on the phase space under the uniform topology.

\subsection{Homotopy and Argument}

We recall some necessary results from \cite{L02, DZ}. Denote by $\mathrm{M}(2,2)$ the space of all $2 \times 2$ real matrices.  Let $J$ be the standard symplectic matrix
    \[
    J:=\left(\begin{array}{cc}
         0 & -1 \\
         1 & 0
       \end{array}\right).
    \]
We say that $D \in \mathrm{M(}2,2)$ is \emph{symplectic} if and only if $D^T J D = J$, where $D^T$ is the transpose matrix of $D$. It is well known that the collection of all $2 \times 2$ real symplectic matrices forms a group with respect to matrix multiplication. Let us denote this group by $\mathrm{Sp}(2,\R)$. It is well known that
    \[
    \mathrm{Sp}(2,\R) = {\rm SL}_2(\R) = \left\{ D \in \mathrm{M}(2,2) : \det(D) = 1 \right\}.
    \]

\begin{lemma} \lb{sp-de} \cite{L02}
For any $D \in \mathrm{Sp}(2,\R)$, there exists a unique decomposition such that $D = A \, U$, where $A \in \mathrm{Sp}(2,\R)$ is a symmetric and positive-definite matrix and $U \in \mathrm{Sp}(2,\R)$ is an orthogonal matrix. Explicitly, we have:
    \be \lb{dem-d}
    D = \left(
        \begin{array}{cc}
          r & z \\
          z & \frac{1+z^2}{r} \\
        \end{array}
      \right) \left(
                     \begin{array}{cc}
                       \cos \vartheta & -\sin \vartheta \\
                       \sin \vartheta & \cos \vartheta \\
                     \end{array}
                   \right),
    \ee
where $(r, \vartheta, z) \in \R^+ \times  \R/(2\pi \Z-\pi) \times \R$ is uniquely determined by $D$.
\end{lemma}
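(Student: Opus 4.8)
The plan is to exhibit the decomposition explicitly by first normalizing the orthogonal factor, then reading off the symmetric positive-definite factor, and finally checking uniqueness. Concretely, given $D \in \mathrm{Sp}(2,\R) = \mathrm{SL}_2(\R)$, write $D = \begin{pmatrix} a & b \\ c & d \end{pmatrix}$ with $ad-bc=1$. The first column $(a,c)^T$ is a nonzero vector of $\R^2$, so there is a unique $\vartheta$ in a suitable half-open interval of length $2\pi$ (the paper uses $\R/(2\pi\Z - \pi)$, i.e.\ representatives in $(-\pi,\pi]$) and a unique $r>0$ with $(a,c)^T = (r\cos\vartheta, r\sin\vartheta)^T$; explicitly $r = \sqrt{a^2+c^2}$. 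Set $U$ to be the rotation by $\vartheta$ as in \x{dem-d} and put $A := D U^{-1} = D U^T$. Since $U \in \mathrm{Sp}(2,\R)$ and $\mathrm{Sp}(2,\R)$ is a group, $A \in \mathrm{Sp}(2,\R)$, hence $\det A = 1$.

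The key computation is then to verify that $A$ has the claimed form $\begin{pmatrix} r & z \\ z & (1+z^2)/r \end{pmatrix}$. Multiplying $D$ by $U^T$ on the right, the first column of $A$ becomes $(D U^T)$ applied to $(\cos\vartheta,\sin\vartheta)^T$... more directly, one computes $A = D U^T$ column by column: the $(1,1)$ entry is $a\cos\vartheta + c\sin\vartheta = r$; the $(2,1)$ entry is $c\cos\vartheta - a\sin\vartheta$, which one checks equals the $(1,2)$ entry $b\cos\vartheta + d\sin\vartheta$ by using $ad-bc=1$ — this is precisely where symplecticity forces symmetry of $A$. Call this common value $z$. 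Finally $\det A = 1$ forces the $(2,2)$ entry to be $(1+z^2)/r$. Positive-definiteness is then immediate: $A$ is symmetric with $\tr A = r + (1+z^2)/r > 0$ and $\det A = 1 > 0$, so both eigenvalues are positive. Thus $D = AU$ is a decomposition of the desired type, and the triple $(r,\vartheta,z)$ is the one displayed.

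For uniqueness, suppose $D = A_1 U_1 = A_2 U_2$ with $A_i$ symmetric positive-definite symplectic and $U_i$ orthogonal symplectic. Then $A_2^{-1} A_1 = U_2 U_1^{-1}$ is simultaneously symmetric positive-definite (as the product is conjugate to $A_1^{1/2} A_2^{-1} A_1^{1/2}$, a standard polar-decomposition argument) and orthogonal, hence equals the identity; so $A_1 = A_2$ and $U_1 = U_2$. Alternatively, and perhaps more in the spirit of the explicit statement, uniqueness follows by running the construction backwards: the first column of $D$ determines $r>0$ and the rotation angle $\vartheta \in (-\pi,\pi]$ uniquely, and then $A = DU^T$ and its entry $z$ are determined. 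Either route is routine.

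The main obstacle — really the only nontrivial point — is verifying that symplecticity of $D$ (equivalently $\det D = 1$) genuinely forces the off-diagonal entries of $A = DU^T$ to coincide, i.e.\ that $A$ is symmetric; once symmetry is in hand, positive-definiteness and the exact form of the $(2,2)$ entry are forced by $\det A = 1$. This is a short identity in $a,b,c,d,\vartheta$ using $ad - bc = 1$, but it is the crux, since for a general $D \in \mathrm{GL}_2(\R)$ the polar decomposition exists but the symmetric factor is not parametrized this tidily. I would present this identity, note that uniqueness of $(r,\vartheta)$ from the first column gives uniqueness of the whole triple, and leave the remaining bookkeeping to the reader.
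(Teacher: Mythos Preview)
The paper does not prove this lemma --- it is quoted from \cite{L02} without argument --- so your proposal must stand on its own.

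It has a genuine gap at the very first step. You propose to read off $r$ and $\vartheta$ from the polar coordinates of the first column $(a,c)^T$ of $D$, setting $a=r\cos\vartheta$, $c=r\sin\vartheta$. That is the Gram--Schmidt (QR) construction, not the polar decomposition. With $D=AU$ and $A$ \emph{symmetric} as in \x{dem-d}, the first column of $D$ is $A(\cos\vartheta,\sin\vartheta)^T$, which depends on $z$ as well as on $r$ and $\vartheta$. Concretely, take $D=\left(\begin{array}{cc}1&1\\0&1\end{array}\right)$: the explicit decomposition of $R_c$ displayed just after the lemma (with $c=1$) gives $\cos\vartheta=2/\sqrt5$, $\sin\vartheta=-1/\sqrt5$, whereas your recipe (first column $=(1,0)^T$) gives $\vartheta=0$, hence $A=DU^T=D$, which is not symmetric. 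Relatedly, the entries you write down are those of $U^TD$, not of $DU^T$: the $(1,1)$ entry of $DU^T$ is $a\cos\vartheta-b\sin\vartheta$, not $a\cos\vartheta+c\sin\vartheta$. With your choice of $\vartheta$ the matrix $U^TD$ is upper triangular, its $(2,1)$ entry vanishes, and the asserted equality ``$(2,1)=(1,2)$ from $ad-bc=1$'' is simply false (check it on the example above).

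The correct construction is the standard polar one: set $A:=(DD^T)^{1/2}$ and $U:=A^{-1}D$; then $UU^T=A^{-1}(DD^T)A^{-1}=I$, $\det A=(\det D)=1$ forces the $(2,2)$ entry to be $(1+z^2)/r$, and positive-definiteness gives $r=e_1^TAe_1>0$. Equivalently, the angle is fixed by requiring $DU^T$ to be symmetric, which yields $\tan\vartheta=(c-b)/(a+d)$ rather than $c/a$. Your uniqueness paragraph is essentially fine once existence is established correctly.
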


This implies the following result.

\begin{lemma} \lb{sp-r3} \cite{L02}
There exists a one-to-one correspondence from $\mathrm{Sp}(2,\R)$ to $\{ (x,y,z) \in \R^3 \setminus \{z \mbox{-axis}\}\}$ as
    \[
    g : D \mapsto (r \cos \vartheta, r \sin \vartheta, z),
    \]
where $(r, \vartheta, z)$ is defined above. Moreover, $g$ is a homeomorphism.
\end{lemma}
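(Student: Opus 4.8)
The plan is to exhibit an explicit homeomorphism and check continuity of it and its inverse. The statement to be proved is that the map
\[
g : \mathrm{Sp}(2,\R) \to \R^3 \setminus \{z\mbox{-axis}\}, \qquad g(D) = (r\cos\vartheta, r\sin\vartheta, z),
\]
where $(r,\vartheta,z)$ is the triple from the decomposition \x{dem-d} in Lemma \ref{sp-de}, is a bijection and a homeomorphism. The starting point is Lemma \ref{sp-de}: it already tells us that $D\mapsto (r,\vartheta,z)$ is a well-defined bijection from $\mathrm{Sp}(2,\R)$ onto $\R^+\times \R/(2\pi\Z-\pi)\times \R$. Composing with polar coordinates $(r,\vartheta)\mapsto (r\cos\vartheta, r\sin\vartheta)$, which is a bijection from $\R^+\times\R/(2\pi\Z)$ onto $\R^2\setminus\{0\}$, immediately gives that $g$ is a bijection onto $\R^3\setminus\{z\mbox{-axis}\}$. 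So the only real content is the topological claim.

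For continuity of $g$, I would argue as follows. Writing $D = \begin{pmatrix} d_{11} & d_{12} \\ d_{21} & d_{22}\end{pmatrix}$ with $d_{11}d_{22}-d_{12}d_{21}=1$, one can read off from $D = A\,U$ with $A = \begin{pmatrix} r & z \\ z & (1+z^2)/r\end{pmatrix}$ and $U$ the rotation by $\vartheta$, that
\[
AA^T = DD^T = \begin{pmatrix} r^2+z^2 & z(r + (1+z^2)/r) \\ z(r+(1+z^2)/r) & z^2 + (1+z^2)^2/r^2 \end{pmatrix},
\]
and hence that $r^2+z^2$, $z(r+(1+z^2)/r)$ and the remaining entry are smooth functions of the entries of $D$. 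From these one solves for $r>0$ and $z$ by elementary algebra (the positivity $r>0$ picks the correct branch of a square root), so $r$ and $z$ are continuous functions of $D$; then $U = A^{-1}D$ has entries that are continuous in $D$, and $\vartheta$ is recovered continuously as the argument of the first column of $U$ (which is a nonzero vector, and the argument map $\R^2\setminus\{0\}\to\R/(2\pi\Z)$ is continuous). Finally $g(D) = (r\cos\vartheta, r\sin\vartheta, z)$ is continuous. Conversely, given $(x,y,z)\in\R^3\setminus\{z\mbox{-axis}\}$, set $r = \sqrt{x^2+y^2}>0$ and recover $\vartheta$ from $(x/r, y/r)$; then $g^{-1}(x,y,z) = A(r,z)\,U(\vartheta)$ is manifestly a continuous (indeed rational-in-$(r,z)$ and trigonometric-in-$\vartheta$) function of $(x,y,z)$, so $g^{-1}$ is continuous as well.

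The one point requiring care — and the step I expect to be the main obstacle — is the branch selection in recovering $(r,z)$ from $DD^T$, i.e. verifying that the algebraic inversion is genuinely single-valued and continuous over all of $\mathrm{Sp}(2,\R)$, and in particular that $\vartheta$ is well-defined modulo $2\pi\Z$ rather than only modulo $2\pi\Z - \pi$; here one uses that $A$ is \emph{positive}-definite (not merely invertible symmetric), which forces $r>0$ and pins down the decomposition uniquely by Lemma \ref{sp-de}, so there is no sign ambiguity in $U$. Once that is in hand the rest is routine: bijectivity is formal from Lemma \ref{sp-de}, and continuity both ways follows from the explicit formulas. Since $g$ is a continuous bijection with continuous inverse, it is a homeomorphism, which is the assertion of Lemma \ref{sp-r3}.
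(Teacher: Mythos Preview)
The paper does not supply a proof of this lemma: it is simply cited from \cite{L02}. Your argument is a correct and self-contained verification. The essential content is exactly what you identify --- bijectivity is immediate from Lemma~\ref{sp-de} together with the polar-coordinate bijection $\R^+\times\R/2\pi\Z\to\R^2\setminus\{0\}$, and continuity both ways follows from the explicit formulas. One simplification you might consider: rather than solving algebraically for $(r,z)$ from the entries of $DD^T$, you can invoke directly that the positive-definite square root $A=(DD^T)^{1/2}$ depends continuously (indeed smoothly) on $D$, whence $r=A_{11}$, $z=A_{12}$, and $U=A^{-1}D$ are all continuous in $D$; this bypasses the branch-selection concern entirely, since the positive square root is unique.
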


Due to the expression of \x{sys}, we only consider the following group denoted by
    \[
    \mathrm{Trig}(2,\R):=\left\{R_c:=\left(
                               \begin{array}{cc}
                                 1 & c \\
                                 0 & 1 \\
                               \end{array}
                             \right): c \in \R
    \right\} \subset \mathrm{Sp}(2,\R).
    \]
For $R_c \in \mathrm{Trig}(2,\R)$, the unique decomposition can be calculated as
    \[
    R_c = \left(
        \begin{array}{cc}
          \frac{c^2+2}{\sqrt{c^2+4}} & \frac{c}{\sqrt{c^2+4}} \\
          \frac{c}{\sqrt{c^2+4}} & \frac{2}{\sqrt{c^2+4}} \\
        \end{array}
      \right) \left(
                     \begin{array}{cc}
                       \frac{2}{\sqrt{c^2+4}} & \frac{c}{\sqrt{c^2+4}} \\
                       -\frac{c}{\sqrt{c^2+4}} & \frac{2}{\sqrt{c^2+4}} \\
                     \end{array}
                   \right).
    \]
Construct a continuous path $P_c (\cdot) : [0,1] \rightarrow \mathrm{Sp}(2,\R)$ as
    \be \lb{homo}
    P_c(\tau) = \left(
        \begin{array}{cc}
          \frac{(\tau c)^2+2}{\sqrt{(\tau c)^2+4}} & \frac{\tau c}{\sqrt{(\tau c)^2+4}} \\
          \frac{\tau c}{\sqrt{(\tau c)^2+4}} & \frac{2}{\sqrt{(\tau c)^2+4}} \\
        \end{array}
      \right) \left(
                     \begin{array}{cc}
                       \frac{2}{\sqrt{(\tau c)^2+4}} & \frac{\tau c}{\sqrt{(\tau c)^2+4}} \\
                       -\frac{\tau c}{\sqrt{(\tau c)^2+4}} & \frac{2}{\sqrt{(\tau c)^2+4}} \\
                     \end{array}
                   \right)=\left(
                             \begin{array}{cc}
                               1 & \tau c \\
                               0 & 1 \\
                             \end{array}
                           \right).
    \ee
$P_c(\cdot)$ connects $I_2$ and $R_c$. The homotopy class of $P_c(\cdot)$ is denoted by $[P_c]$. Then the jump of arguments on $\Gamma$ can be well defined when the homotopy class is fixed by the construction $[P_c]$; see \cite[Figure 1]{DZ}. In detail, denote by $\mathrm{V}(\R^2)$ the set of all vectors starting from the origin in $\R^2$. The equivalence $\sim$ on $\mathrm{V}(\R^2)$ is defined by
    \[
    \vec{v}_1 \sim \vec{v}_2 \Longleftrightarrow \vec{v}_1=k \vec{v}_2, \quad \mbox{for~some~}k \in \R^+.
    \]
It is well known that
    \[
    \mathrm{L}(\R) := \mathrm{V}(\R^2)/ \sim
    \]
is an orientable compact manifold of dimension one, and may be regarded as a two-covering of the real projective line $\mathbb{RP}^1$. Topologically, $\mathrm{L}(\R)$ is homeomorphic to the circle ${\mathbb S}_{2\pi} := \R / 2 \pi \Z$.

Let $\Xi \in \R$. Then we have
    \[
    P_c(\tau)(\cos \Xi, \sin \Xi)^T=(\cos \Xi+ \tau c\sin \Xi, \sin \Xi)^T.
    \]
Since the homotopy class of $P_c(\cdot)$ is fixed and $\arg(\cdot)$ is understood as a continuous branch, the argument function
    \[
    F(c,\tau,\Xi) = \arg(\cos \Xi+ \tau c \sin \Xi+\mathrm{i} \sin \Xi)
    \]
is continuous with respect to $(c, \tau, \Xi) \in \R \times [0,1] \times \R$. In particular, we may choose one continuous branch of $F(c,\tau,\Xi)$ such that when $\tau = 0$, we have
    \[
    \arg(\cos \Xi+ \mathrm{i} \sin \Xi) = \Xi.
    \]
Then we define the jump of arguments by
    \be  \lb{j-def}
    J(c,\Xi) = F(c,1,\Xi)-F(c,0,\Xi).
    \ee


\begin{lemma} \lb{j-cont} \cite{DZ}
$J : \R^2 \to \R$ is continuous with respect to $(c, \Xi) \in \R^2$. Moreover,
    \[
    J(c, \Xi + 2 \pi) = J(c, \Xi).
    \] 
\end{lemma}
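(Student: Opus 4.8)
The plan is to read off both claims from the single continuous branch of the argument function already introduced just before the statement. Write $z(c,\tau,\Xi):=\cos\Xi+\tau c\sin\Xi+\mathrm{i}\sin\Xi$, so that the argument function is $F(c,\tau,\Xi)=\arg z(c,\tau,\Xi)$ and, by \x{j-def}, $J(c,\Xi)=F(c,1,\Xi)-F(c,0,\Xi)=F(c,1,\Xi)-\Xi$. The first thing to check is that $z$ never vanishes: if $\im z=\sin\Xi=0$, then $\re z=\cos\Xi=\pm 1\neq 0$. Hence $z$ is a continuous map $\R\times[0,1]\times\R\to\C\setminus\{0\}$ defined on a convex --- in particular simply connected --- domain, so the standard lifting property of the covering $t\mapsto e^{\mathrm{i}t}$ over a simply connected base yields a continuous $F$ with $e^{\mathrm{i}F}=z/|z|$, unique once its value is prescribed at one point. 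Fixing $F(0,0,0)=0$ and noting $z(c,0,\Xi)=e^{\mathrm{i}\Xi}$, the function $(c,\Xi)\mapsto F(c,0,\Xi)-\Xi$ takes values in $2\pi\Z$, is continuous on the connected set $\R^2$, hence constant, and equals $0$ at the origin; so the chosen branch indeed satisfies $F(c,0,\Xi)=\Xi$, matching the normalization used in the excerpt.

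Granting this, continuity of $J$ is immediate: $J(c,\Xi)=F(c,1,\Xi)-\Xi$ is the difference of the restriction of the continuous function $F$ to the slice $\{\tau=1\}$ and the identity map, both continuous in $(c,\Xi)$.

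For the periodicity we use that $\cos$ and $\sin$ have period $2\pi$, so $z(c,\tau,\Xi+2\pi)=z(c,\tau,\Xi)$. Consequently $F(c,\tau,\Xi+2\pi)-2\pi$ and $F(c,\tau,\Xi)$ are two continuous arguments of the same $\C\setminus\{0\}$-valued map, so their difference is a continuous $2\pi\Z$-valued function on the connected set $\R\times[0,1]\times\R$, hence constant; at $\tau=0$ it equals $F(c,0,\Xi+2\pi)-2\pi-F(c,0,\Xi)=(\Xi+2\pi)-2\pi-\Xi=0$, so the constant is $0$. Evaluating at $\tau=1$ gives $F(c,1,\Xi+2\pi)=F(c,1,\Xi)+2\pi$, and therefore $J(c,\Xi+2\pi)=F(c,1,\Xi+2\pi)-(\Xi+2\pi)=F(c,1,\Xi)-\Xi=J(c,\Xi)$.

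The only genuinely delicate point is the existence of the globally defined continuous branch $F$, i.e.\ the lifting step over the simply connected domain, which rests entirely on the non-vanishing of $z$; everything after that is bookkeeping with arguments. One could also avoid covering-space language: on the open set $\{\sin\Xi>0\}$ the point $z$ stays in the open upper half-plane and $J(c,\Xi)=\mathrm{arccot}\frac{\cos\Xi+c\sin\Xi}{\sin\Xi}-\mathrm{arccot}\frac{\cos\Xi}{\sin\Xi}$ with $\mathrm{arccot}\colon\R\to(0,\pi)$, similarly on $\{\sin\Xi<0\}$, while on the lines $\{\sin\Xi=0\}$ the curve $\tau\mapsto z$ is constant so $J=0$; continuity across these lines then follows from a short direct estimate. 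I would present the lifting argument, as it is cleaner and also gives the periodicity for free.
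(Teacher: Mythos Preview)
Your argument is correct. The paper does not give its own proof of this lemma; it simply cites \cite{DZ}. What you have done is make rigorous the sketch already present in the paragraph preceding the lemma: the paper asserts there that $F(c,\tau,\Xi)=\arg(\cos\Xi+\tau c\sin\Xi+\mathrm{i}\sin\Xi)$ is continuous because the homotopy class is fixed and a continuous branch of $\arg$ is chosen, with the normalization $F(c,0,\Xi)=\Xi$. Your lifting argument supplies the justification for that assertion (via non-vanishing of $z$ and simple connectedness of the domain), and your periodicity computation is the natural one. So your proof is aligned with the paper's setup and fills in exactly the details the paper outsources to \cite{DZ}.
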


We revisit now the system \x{sys}. Let $\ce{^{V}_{\Gamma}}q \in PC_{\delta,m,M,ap}(\R)$. We need to embed it in a family of systems as follows:
    \begin{equation} \label{sys-fa}
    \left\lbrace \begin{array}{ll}
    \disp\frac{\rd}{\rd x}\left(
                       \begin{array}{c}
                         \psi' \\
                         \psi \\
                       \end{array}
                     \right) =\left(
                                \begin{array}{cc}
                                  0 & \tilde{q}(x)- E \\
                                  1 & 0 \\
                                \end{array}
                              \right)\left(
                                       \begin{array}{c}
                                         \psi' \\
                                         \psi \\
                                       \end{array}
                                     \right) , \qquad &x \in \mathbb{R}\setminus \tilde{\Gamma},\\
    \left(
      \begin{array}{c}
        \psi'(\tilde{x}_n+) \\
        \psi(\tilde{x}_n+) \\
      \end{array}
    \right)= \left(
               \begin{array}{cc}
                 1 & \tilde{v}_n \\
                 0 & 1 \\
               \end{array}
             \right)\left(
                      \begin{array}{c}
                         \psi'(\tilde{x}_n-) \\
                        \psi(\tilde{x}_n-) \\
                      \end{array}
                    \right)
    , \qquad &x=\tilde{x}_n \in \tilde{\Gamma},
    \end{array} \right.
    \end{equation}
where $\ce{^{\ti V}_{\ti \Gamma}}\ti q \in \mathrm{H}\Bigl(\ce{^{V}_{\Gamma\,}}q \Bigr)$. For definiteness, the solution of \x{sys-fa} is understood to be right-continuous with respect to $x \in \R$, that is, $(\psi'(x+), \psi(x+))^T \equiv (\psi'(x), \psi(x))^T$. In this sense, $\psi'(x)$ and $\psi(x)$ are well defined on $\R$. Suppose that $\Psi(x) := \Psi_{E}\Bigl(x;\ce{^{\ti V}_{\ti \Gamma\,}}\ti q\Bigr)$ is the fundamental matrix solution of \x{sys-fa} with the initial value $\Psi(0) = I_2$. Then we have the following result.

\begin{lemma} \cite{DZ}
For any $x \in \R$, $\Psi(x) \in \mathrm{Sp}(2,\R)$.
\end{lemma}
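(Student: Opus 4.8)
The plan is to verify that the fundamental matrix solution $\Psi(x)$ stays in $\mathrm{Sp}(2,\R)$ by checking the two elementary mechanisms that build up $\Psi$: the continuous flow between consecutive impulse points, and the jump maps at the points of $\tilde\Gamma$. Since $\mathrm{Sp}(2,\R) = \mathrm{SL}_2(\R)$, it suffices to track $\det \Psi(x)$.

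First I would observe that on each interval $(\tilde x_n, \tilde x_{n+1})$ of continuity, $\Psi(x)$ solves the linear ODE $\Psi'(x) = B(x)\Psi(x)$ with
    \[
    B(x) = \left(\begin{array}{cc} 0 & \tilde q(x) - E \\ 1 & 0 \end{array}\right),
    \]
so $\tr B(x) = 0$ for every $x$. By Liouville's (Abel--Jacobi--Liouville) formula, $\frac{\rd}{\rd x}\det\Psi(x) = \tr\bigl(B(x)\bigr)\det\Psi(x) = 0$, hence $\det\Psi(x)$ is constant on each such interval. Equivalently, writing $\Psi(x) = \Phi(x,x_0)\Psi(x_0)$ for the transition matrix $\Phi(x,x_0)$ on a continuity interval, one has $\det\Phi(x,x_0) = 1$, so $\Phi(x,x_0)\in\mathrm{Sp}(2,\R)$, using that for $2\times 2$ matrices $\det D = 1$ is equivalent to $D^T J D = J$.

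Next I would handle the impulses: at each $\tilde x_n \in \tilde\Gamma$, by \x{sys-fa} the solution vector is multiplied on the left by $R_{\tilde v_n} = \left(\begin{smallmatrix} 1 & \tilde v_n \\ 0 & 1 \end{smallmatrix}\right) \in \mathrm{Trig}(2,\R) \subset \mathrm{Sp}(2,\R)$, which has determinant $1$. Thus, for $x$ in the interval containing $0$, $\Psi(x)$ is a product of transition matrices $\Phi(\cdot,\cdot)$ (each of determinant $1$) and the impulse matrices $R_{\tilde v_n}$ (each of determinant $1$), interleaved according to how many points of $\tilde\Gamma$ lie between $0$ and $x$; the separation condition $\inf_i \Delta\tilde x_i \ge m > 0$ from the definition of $L_{m,M}(\Z)$ guarantees that only finitely many such points occur on any bounded interval, so this product is finite and well defined. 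Since $\mathrm{Sp}(2,\R)$ is a group under matrix multiplication, $\Psi(x) \in \mathrm{Sp}(2,\R)$ for all $x \in \R$, and the right-continuity convention makes $\Psi(x)$ unambiguous at the impulse points.

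I do not expect any serious obstacle here — the statement is essentially a bookkeeping consequence of two facts, that the coefficient matrix is traceless and that the jump matrices are unipotent, both of which are immediate from \x{sys-fa}. The only point requiring a word of care is the interaction between the flow and the impulses: one should phrase $\Psi(x)$ honestly as the ordered product of the finitely many transition and impulse factors on $[0,x]$ (or $[x,0]$), and invoke the group structure of $\mathrm{Sp}(2,\R)$ recorded just before Lemma \ref{sp-de} to close the argument. \qed
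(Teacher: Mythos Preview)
Your argument is correct: the traceless coefficient matrix gives $\det\Phi(x,x_0)=1$ on each continuity interval via Liouville's formula, the jump matrices $R_{\tilde v_n}$ are unipotent hence of determinant $1$, and the group structure of $\mathrm{Sp}(2,\R)=\mathrm{SL}_2(\R)$ closes the argument. The paper itself does not supply a proof of this lemma --- it is stated with a citation to \cite{DZ} and left unproved --- so there is no in-paper argument to compare against; your write-up is exactly the standard proof one would expect for this fact.
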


If $(\psi'(x), \psi(x))^T$ has the initial value $(\psi'(0),\psi(0))^T = (\alpha,\beta)^T$, we have
    \[
    (\psi'(x),\psi(x))^T = \Psi(x) (\alpha,\beta)^T. 
    \]
Introduce the so-called Pr\"{u}fer transformation as
    \be \label{pufer}
    \psi'+ \mathrm{i}\, \psi= r\ \mathrm{e}^{\mathrm{i}\, \theta}.
    \ee
Then the argument $\theta = \theta(x)$ may be denoted by
    \[
    \theta(x) := \arg (\psi'(x) + \mathrm{i}\, \psi(x)),
    \]
where $(\psi'(x), \psi(x))^T$ is any non-trivial solution of \x{sys-fa}. When the system \x{sys-fa} is restricted on $\mathbb{R} \setminus \tilde \Gamma$, we understand $\arg(\cdot)$ as a continuous branch on $[\tilde x_n, \tilde x_{n+1})$, where $\tilde \Gamma= \{\tilde x_n\}_{n \in \Z}$. It is easy to obtain that the differential equation for $\theta$ is
    \[
    \theta'(x) = \cos^2 \theta - (\tilde q(x) - E) \sin^2 \theta, \qq x \in \mathbb{R} \setminus \tilde \Gamma.
    \]
But it is crucial to deal with the jump of arguments on $\Gamma$ via a reasonable approach, because the vector field of \x{sys-fa} on $\tilde \Gamma$ is singular. To overcome this difficulty, we use the homology that is defined by \x{homo}. Thus via the Pr\"{u}fer transformation \x{pufer}, the evolution of the arguments is found to be
    \begin{equation} \label{ar-fa}
    \left\lbrace \begin{array}{ll}
    \theta'(x) = \cos^2 \theta(x) - (\tilde{q}(x)-E) \sin^2 \theta(x), \qq & x \in \mathbb{R} \setminus \tilde{\Gamma}, \\
    \theta(\tilde{x}_n+) - \theta(\tilde{x}_n-) = J(\tilde{v}_n,\theta(\tilde{x}_n-)), \qq & x = \tilde{x}_n \in \tilde{\Gamma}.
    \end{array} \right.
    \end{equation}
Denote by $\theta_E(x) = \theta_{E}\Bigl(x+;\ce{^{\ti V}_{\ti \Gamma\,}}\ti q,\Xi\Bigr)$ the solution of \x{ar-fa} with the initial value $\theta_E(0) = \Xi \in \R$. By the uniqueness of solutions of ODEs and the boundedness of piecewise continuous functions, it is easy to deduce the following result.

\begin{lemma} \lb{lm-ar-mix}
Let $\ce{^{\ti V}_{\ti \Gamma\,}}\ti q \in \mathrm{H}\Bigl(\ce{^{V}_{\Gamma\,}}q \Bigr)$ and $E \in \R$ be fixed. Then
\begin{itemize}
\item for $\Xi \in \R$, $x \in \R$ and $k \in \Z$, we have
    \be \lb{th-pe}
    \theta_{E} \Bigl(x; \ce{^{\ti V}_{\ti \Gamma\,}}\ti q, \Xi + 2 k \pi\Bigr) - (\Xi + 2 k \pi) = \theta_{E} \Bigl(x; \ce{^{\ti V}_{\ti \Gamma\,}}\ti q, \Xi\Bigr) - \Xi;
    \ee
\item for $x \in \R$, we have
    \be \lb{th-con}
    \theta_{E} \Bigl(x; \ce{^{\ti V}_{\ti \Gamma}}\ti q, \Xi_1\Bigr) < \theta_{E} \Bigl(x; \ce{^{\ti V}_{\ti \Gamma}}\ti q, \Xi_2\Bigr), \qq \mbox{when~} \Xi_1< \Xi_2,
    \ee
\item for $\Xi \in \R$, $k_1, k_2 \in \Z$ and $\tilde{\Gamma} = \{ \tilde{x}_n \}_{n \in \Z}$, we have
    \be \lb{th-sk}
    \theta_{E} \Bigl(\ti{x}_{k_1 + k_2}; \ce{^{\ti V}_{\ti \Gamma\,}}\ti q, \Xi\Bigr) = \theta_{E} \Bigl(\ti x_{k_1 + k_2} - \ti x_{k_2}; \ce{^{\ti V}_{\ti \Gamma\,}}\ti q \cdot k_2, \theta_{E} \Bigl(\ti{x}_{k_2}; \ce{^{\ti V}_{\ti \Gamma\,}}\ti q, \Xi\Bigr)\Bigr),
    \ee
where $\ce{^{\ti V}_{\ti \Gamma\,}}\ti q \cdot k_2$ is defined by \xx{sh-tr};

\item for $\tilde{\Gamma} = \{ \tilde{x}_n \}_{n \in \Z}$, we have
    \be \lb{th-di}
    \lim_{x\to+\oo} \frac{\theta_{E} \Bigl(x; \ce{^{\ti V}_{\ti \Gamma\,}}\ti q, \Xi\Bigr) - \Xi}{x} = \lim_{n \to +\oo} \frac{\theta_{E} \Bigl(\tilde{x}_n; \ce{^{\ti V}_{\ti \Gamma\,}}\ti q, \Xi\Bigr) - \Xi}{\tilde{x}_n},
    \ee
that is, if one of limits exists, then the other one exists as well and they are equal.
\end{itemize}
\end{lemma}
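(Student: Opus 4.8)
The plan is to derive all four items from a single fact: for every initial value $\Xi$ the argument equation \x{ar-fa} has a unique solution defined on all of $\R$, depending continuously on the data. I would establish this by observing that on each continuity interval $[\tilde x_n,\tilde x_{n+1})$ the scalar field $\theta\mapsto\cos^2\theta-(\tilde q(x)-E)\sin^2\theta$ is globally Lipschitz in $\theta$ with a constant bounded by $1+\|\tilde q\|_\infty+|E|$, which is finite and uniform over the hull by Remark~\ref{re-pcu-com1}; that $\tilde\Gamma=\{\tilde x_n\}_{n\in\Z}\in L_{m,M}(\Z)$, so the jump points are $m$-separated and only finitely many lie in any bounded interval; and that each transition $\eta\mapsto\eta+J(\tilde v_n,\eta)=F(\tilde v_n,1,\eta)$ is a $C^1$ diffeomorphism of $\R$ --- indeed, with $(u,v):=(\cos\eta+\tilde v_n\sin\eta,\ \sin\eta)$ a direct computation gives $\partial_\eta F(\tilde v_n,1,\eta)=(uv'-vu')/(u^2+v^2)=1/(u^2+v^2)>0$ (the denominator is positive for every $\eta$), and $F(\tilde v_n,1,\cdot)$ commutes with translation by $2\pi$ because $J$ does (Lemma~\ref{j-cont}), hence is onto $\R$. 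So \x{ar-fa} is well posed forward and backward. Granting this, \x{th-pe} is immediate: since $\cos^2$, $\sin^2$ and $J(\cdot,\cdot)$ are $2\pi$-periodic in the angular variable, $x\mapsto\theta_E(x;\ce{^{\tilde V}_{\tilde\Gamma\,}}\tilde q,\Xi)+2k\pi$ solves \x{ar-fa} with initial value $\Xi+2k\pi$, so by uniqueness it equals $\theta_E(x;\ce{^{\tilde V}_{\tilde\Gamma\,}}\tilde q,\Xi+2k\pi)$, and subtracting $\Xi+2k\pi$ gives \x{th-pe}.

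For \x{th-con} I would show that strict order of two solutions is preserved both across continuity intervals and across jumps. On $[\tilde x_n,\tilde x_{n+1})$ two solutions of the scalar ODE that agreed at some interior point would agree at the left endpoint by backward uniqueness, so distinct left-endpoint values force strict order throughout the interval; at a jump point the two values are transformed by the strictly increasing map $F(\tilde v_n,1,\cdot)$ found above, which again preserves strict order. An induction over the intervals, started from $\theta_E(0;\Xi_1)=\Xi_1<\Xi_2=\theta_E(0;\Xi_2)$ and run in both directions, then yields \x{th-con}.

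For \x{th-sk} I would reparametrize: put $\chi(y):=\theta_E(y+\tilde x_{k_2};\ce{^{\tilde V}_{\tilde\Gamma\,}}\tilde q,\Xi)$. Unwinding the shift \x{sh-tr} --- the data $\ce{^{\tilde V}_{\tilde\Gamma\,}}\tilde q\cdot k_2$ has potential $\tilde q(\cdot+\tilde x_{k_2})$, point set $\{\tilde x_{i+k_2}-\tilde x_{k_2}\}_{i\in\Z}$ and weights $\{\tilde v_{i+k_2}\}_{i\in\Z}$ --- one checks directly that $\chi$ solves \x{ar-fa} for this shifted data, with $\chi(0)=\theta_E(\tilde x_{k_2};\ce{^{\tilde V}_{\tilde\Gamma\,}}\tilde q,\Xi)$; the right-continuity convention makes this initial value consistent with the jump at the point $0$ of $\tilde\Gamma\cdot k_2$. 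By uniqueness on $\R$, $\chi(y)=\theta_E\bigl(y;\ce{^{\tilde V}_{\tilde\Gamma\,}}\tilde q\cdot k_2,\ \theta_E(\tilde x_{k_2};\ce{^{\tilde V}_{\tilde\Gamma\,}}\tilde q,\Xi)\bigr)$, and evaluating at $y=\tilde x_{k_1+k_2}-\tilde x_{k_2}$ gives \x{th-sk}. For \x{th-di} I would use the uniform bounds $|\theta_E'(x;\Xi)|\le 1+\|\tilde q\|_\infty+|E|=:C_1$ on the continuity set and $|J(\tilde v_n,\cdot)|\le C_2:=\max\{|J(c,\eta)|:|c|\le B,\ \eta\in[0,2\pi]\}<\infty$ (with $B$ as in Remark~\ref{re-pcu-com1}), so that for $\tilde x_n\le x<\tilde x_{n+1}$ one has $|\theta_E(x;\Xi)-\theta_E(\tilde x_n;\Xi)|\le C_1M$, $|\theta_E(\tilde x_n;\Xi)-\Xi|\le n(C_1M+C_2)$ and $\tilde x_n\ge nm$; these combine to give
\[
\Bigl|\frac{\theta_E(x;\Xi)-\Xi}{x}-\frac{\theta_E(\tilde x_n;\Xi)-\Xi}{\tilde x_n}\Bigr|\ \le\ \frac{C_1M}{x}+\frac{(C_1M+C_2)M}{n m^2},
\]
which tends to $0$ as $x\to+\infty$ (then $n=n(x)\to\infty$). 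Hence the continuous-parameter quotient and the discrete quotient along $\{\tilde x_n\}$ have the same limiting behaviour, so one limit exists if and only if the other does and they agree, proving \x{th-di}.

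I expect everything to be routine once existence and uniqueness for \x{ar-fa} are in hand; the points that will need a little care are the accounting of the right-continuity convention at $\tilde x_{k_2}$ in \x{th-sk}, and the verification that the constants $C_1,C_2$ in \x{th-di} are finite --- which is precisely where Remark~\ref{re-pcu-com1} (uniform boundedness of $\tilde q$ and $\tilde V$ over the hull) and the membership $\tilde\Gamma\in L_{m,M}(\Z)$ enter.
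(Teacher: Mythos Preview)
Your proposal is correct and follows essentially the same approach as the paper. The paper omits the proof, stating only that it follows ``by the uniqueness of solutions of ODEs and the boundedness of piecewise continuous functions'' and referring to \cite{DZ}; the suppressed arguments there proceed exactly as you outline --- \x{th-pe} from $2\pi$-periodicity of the vector field plus uniqueness, \x{th-con} from uniqueness (your explicit verification that the jump map $\eta\mapsto\eta+J(\tilde v_n,\eta)$ is strictly increasing, via $\partial_\eta F=1/(u^2+v^2)$, is a nice addition), \x{th-sk} by reparametrizing $y\mapsto y+\tilde x_{k_2}$ and invoking uniqueness, and \x{th-di} by bounding the increment of $\theta_E$ over a single interval of $\tilde\Gamma$ and using $\tilde x_n/x\to1$. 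Your quantitative version of \x{th-di}, showing directly that the difference of the two quotients tends to zero, is slightly cleaner than the paper's formulation but equivalent.
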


The proof of Lemma \ref{lm-ar-mix} is omitted. For details, see \cite{DZ}. However since the uniform topology is weaker than the one considered in \cite{DZ}, we will give the proof of the following result.

\begin{lemma} \lb{lm-ar-con}
For $\tilde{\Gamma} = \{ \tilde{x}_n \}_{n \in \Z}$, let $k \in \Z$ be fixed. Then $\theta_{E} \Bigl(\ti{x}_k; \ce{^{\ti V}_{\ti \Gamma}}\ti q , \Xi\Bigr) : \mathrm{H}\Bigl(\ce{^{V}_{\Gamma}}q \Bigr) \times \R \to \R$ is continuous.
\end{lemma}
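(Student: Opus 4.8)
The plan is to reduce the statement to the single-step case $k=1$ and then iterate. Since $\mathrm{H}\bigl(\ce{^{V}_{\Gamma\,}}q\bigr)\times\R$ is metrizable (Lemma \ref{di-pcud}), continuity may be checked along sequences. Define a map $\Phi$ on $\mathrm{H}\bigl(\ce{^{V}_{\Gamma\,}}q\bigr)\times\R$ by
\[
\Phi\Bigl(\ce{^{\ti V}_{\ti \Gamma\,}}\ti q,\Xi\Bigr):=\Bigl(\ce{^{\ti V}_{\ti \Gamma\,}}\ti q\cdot 1,\ \theta_{E}\bigl(\ti x_1;\ce{^{\ti V}_{\ti \Gamma\,}}\ti q,\Xi\bigr)\Bigr),
\]
where $\ti x_1$ denotes the first positive point of $\ti\Gamma$. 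Using the cocycle identity \x{th-sk} with $k_1=1$, one verifies by induction that the $\R$-component of $\Phi^{k}$ equals $\theta_{E}(\ti x_k;\cdot,\cdot)$ for every $k\ge 1$. Because the time-one shift $\ce{^{\ti V}_{\ti \Gamma\,}}\ti q\mapsto \ce{^{\ti V}_{\ti \Gamma\,}}\ti q\cdot 1$ is continuous (Lemma \ref{atg}, equivalently the equicontinuity in Lemma \ref{tr-tau}) and $\ce{^{\ti V}_{\ti \Gamma\,}}\ti q\mapsto \ti x_1$ is continuous (Lemma \ref{tr-con} and Lemma \ref{con-l}), it suffices to prove that $(\ce{^{\ti V}_{\ti \Gamma\,}}\ti q,\Xi)\mapsto\theta_{E}(\ti x_1;\ce{^{\ti V}_{\ti \Gamma\,}}\ti q,\Xi)$ is continuous. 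The case $k=0$ is trivial because $\ti x_0=0$ and $\theta_{E}(0;\cdot,\Xi)=\Xi$; the case $k\le -1$ is analogous, running the flow in negative time and using that each jump map $s\mapsto s+J(v,s)$ is a strictly increasing homeomorphism of $\R$ (by Lemma \ref{j-cont} and \x{th-con}), hence has continuous inverse.

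For the one-step continuity, let $\ce{^{\ti V_j}_{\ti \Gamma_j}}\ti q_j\to\ce{^{\ti V_0}_{\ti \Gamma_0}}\ti q_0$ in $\mathrm{H}\bigl(\ce{^{V}_{\Gamma\,}}q\bigr)$ and $\Xi_j\to\Xi_0$, write $\ti\Gamma_j=\{x^j_i\}_{i\in\Z}$, and set $\phi_j(x):=\theta_{E}\bigl(x;\ce{^{\ti V_j}_{\ti \Gamma_j}}\ti q_j,\Xi_j\bigr)$, understood right-continuous. By Remark \ref{re-pcu-com1} there is a bound $\|\ti q_j\|_\infty\le B$ valid for all $j$, so on every impulse-free interval $|\phi_j'|\le 1+B+|E|=:C$; in particular $\phi_j$ has total variation at most $C\ell$ over any impulse-free interval of length $\ell$. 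Lemma \ref{con-l} and Lemma \ref{tr-con} with Remark \ref{re-f} give $x^j_1\to x^0_1$, $\ti v^j_1\to\ti v^0_1$, and $\ti q_j\to\ti q_0$ uniformly on compact subsets of $\R\setminus\ti\Gamma_0$. Now fix $\e>0$, choose $\delta\in(0,m/4)$, and observe that for $j$ large the interval $I:=[\delta,x^0_1-\delta]$ contains no point of $\ti\Gamma_j$ and satisfies $I\subset\R\setminus F_{\delta/2}(\ti\Gamma_0)$; hence on $I$ both $\phi_j$ and $\phi_0$ solve the pure equation $\phi'=\cos^2\phi-(\ti q_\bullet(x)-E)\sin^2\phi$, and $\eta_j:=\sup_{I}|\ti q_j-\ti q_0|\to 0$. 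A Gronwall comparison on $I$ (the drift is $C$-Lipschitz in the angle and the two coefficients differ by at most $\eta_j$) gives $\sup_{I}|\phi_j-\phi_0|\le\bigl(|\phi_j(\delta)-\phi_0(\delta)|+\eta_j x^0_1\bigr)\mathrm{e}^{C x^0_1}$, while $|\phi_j(\delta)-\phi_0(\delta)|\le|\Xi_j-\Xi_0|+2C\delta$ by the variation bound over $[0,\delta]$. Transporting this to the endpoints of the impulse-free intervals by the variation bound once more, namely $|\phi_j(x^j_1-)-\phi_j(x^0_1-\delta)|\le C\bigl(\delta+|x^j_1-x^0_1|\bigr)$ and $|\phi_0(x^0_1-)-\phi_0(x^0_1-\delta)|\le C\delta$, we obtain $\limsup_{j\to\infty}|\phi_j(x^j_1-)-\phi_0(x^0_1-)|\le(\text{const})\,\delta$; as $\delta$ is arbitrary, $\phi_j(x^j_1-)\to\phi_0(x^0_1-)$. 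Finally the jump relation $\phi_j(x^j_1)=\phi_j(x^j_1-)+J\bigl(\ti v^j_1,\phi_j(x^j_1-)\bigr)$, the continuity of $J$ (Lemma \ref{j-cont}), and $\ti v^j_1\to\ti v^0_1$ yield $\phi_j(x^j_1)\to\phi_0(x^0_1-)+J\bigl(\ti v^0_1,\phi_0(x^0_1-)\bigr)=\phi_0(x^0_1)$, which is the desired convergence.

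The main obstacle is the passage through the impulse point. The convergence $\ti q_j\to\ti q_0$ afforded by the uniform topology is only uniform away from $\ti\Gamma_0$, and in addition the impulse location $x^j_1$ drifts with $j$; so one cannot invoke continuous dependence of ODE solutions on the whole of $[0,x^0_1]$. The remedy is the crude a priori bound $|\phi_j'|\le 1+B+|E|$, uniform in $j$, which confines the effect of the two uncontrolled boundary layers of width $O(\delta)$ near $0$ and near $x^0_1$ to an error $O(\delta)$ killed by letting $\delta\to0$ after $j\to\infty$; on the complementary interior interval, where $\ti q_j\to\ti q_0$ uniformly and there are no impulses, the standard Gronwall estimate carries the day. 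Everything else — the reduction to $k=1$, the jump step, and the treatment of $k\le 0$ — is routine given the results already established.
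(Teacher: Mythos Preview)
Your proof is correct and follows essentially the same strategy as the paper: reduce to $k=1$ by induction via the cocycle identity, isolate a buffer of width $\e$ (your $\delta$) near the endpoints where only the crude a~priori bound $|\phi'|\le 1+B+|E|$ is used, apply a Gronwall comparison on the interior interval where $\ti q_j\to\ti q_0$ uniformly, and finish with the continuity of the jump map $J$. Your write-up is somewhat more explicit about the negative-$k$ case and the overall inductive structure, but the analytic content is the same as the paper's argument.
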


\begin{proof}
Without loss of generality, we only check the case $k = 1$. For the general case, we may obtain the result by induction. Assume that
    \be \lb{lm-ar-con0}
    \disp \lim_{i\to +\infty}\dist\Bigl(\ce{^{\ti V_i}_{\ti \Gamma_i}}\ti q_i,\ce{^{\ti V_0}_{\ti \Gamma_0}}\ti q_0\Bigr)= 0 \quad \mbox{and} \quad \lim_{i\to +\infty}|\Xi_i-\Xi_0|=0,
    \ee
where $\Bigl(\ce{^{\ti V_i}_{\ti \Gamma_i}}\ti q_i , \Xi_i \Bigr) \in \mathrm{H}\Bigl(\ce{^{V}_{\Gamma}}q \Bigr) \times \R, \ i \in \N_0$. Then for any $\e>0$, there exists $i_{\e} \in \N$ such that Remark \ref{re-f} holds and meanwhile $|\Xi_i-\Xi_0| < \e$ when $i \ge i_\e$. Let
    \[
    \theta_i (x) := \theta_{E}\Bigl(x; \ce{^{\ti V_i}_{\ti \Gamma_i}}\ti q_i , \Xi_i \Bigr),
    \]
where $\tilde{\Gamma}_i = \{ \ti x^i_n \}_{n \in \Z}$. Then we have
    \be  \lb{lm-ar-con05} \left \{ \begin{array}{ll}
    \theta_i'(x) = \cos^2 \theta_i(x) - (\tilde{q}_i(x) - E) \sin^2 \theta_i(x), \qq x \in (0, \ti x^i_1), \\
    \theta_i(0) = \Xi_i. \end{array} \right.
    \ee
For $\e >0$ given in Remark \ref{re-f} and $x \in [\e,\ti x^0_1-\e]$, this implies that
    \be
    \theta_i(x)=\Xi_i + \left( \int_{(0,\e]} + \int_{[\e,x]}\right) \cos^2 \theta_i(\tau) - (\tilde{q}_i(\tau) - E) \sin^2 \theta_i(\tau) \rd \tau. \nn
    \ee
Denote $D(x):=\theta_i(x)-\theta_0(x), \ x \in [\e,\ti x^0_1-\e]$. We have
    \be
    D(x) = (\Xi_i - \Xi_0) + D_1 + D_2(x). \nn
    \ee
where
    \be
    D_1:=\int_{(0,\e]} \bigl(\cos^2 \theta_i(\tau) - (\tilde{q}_i(\tau) - E) \sin^2 \theta_i(\tau) \bigr)
    - \bigl( \cos^2 \theta_0(\tau) - (\tilde{q}_0(\tau) - E) \sin^2 \theta_0(\tau)\bigr) \rd \tau \nn
    \ee
and
    \be
    D_2(x) := \int_{[\e,x]} \bigl(\cos^2 \theta_i(\tau) - (\tilde{q}_i(\tau) - E) \sin^2 \theta_i(\tau)\bigr) - \bigl(\cos^2 \theta_0(\tau) - (\tilde{q}_0(\tau) - E) \sin^2 \theta_0(\tau)\bigr) \rd \tau. \nn
    \ee

By the uniform boundedness of $(\tilde{q}_i, \ti \Gamma_i)$, we know that there exists $C_1 > 0$ such that
    \be \lb{lm-ar-con5}
    |D_1| < C_1 \e.
    \ee

Now we consider the term $D_2(x)$. When $x$ is fixed, we can regard $\theta_0(x)$ and $\theta_i(x)$ as two real numbers. By the mean value theorem, there exist $\zeta(x), \ \eta(x)$ which belong to the interval with endpoints $\theta_0(x)$ and $\theta_i(x)$ such that
    \bea
    \cos \theta_i (x) - \cos \theta_0(x) \EQ -\sin \zeta(x)(\theta_i(x)-\theta_0(x)), \nn \\
    \sin \theta_i (x) - \sin \theta_0(x) \EQ \cos \eta(x) (\theta_i(x)-\theta_0(x)). \nn
    \eea
Then we have
    \be
    D(x)=(\Xi_i-\Xi_0)+D_1+ \int_{[\e,x]}A(\tau) D(\tau)+B(\tau) \rd \tau, \nn
    \ee
where
    \be \lb{lm-ar-con6}
    A(\tau):= -\sin \zeta(\tau) (\cos \theta_i(\tau)+\cos \theta_0(\tau))-(\ti{q}_0(\tau)-E)\cos \eta(\tau) (\sin \theta_i(\tau) + \sin \theta_0(\tau)),
    \ee
and
    \be \lb{lm-ar-con7}
    B(\tau):=(\ti{q}_0(\tau)-\ti{q}_i(\tau)) \sin^2 \theta_i(\tau).
    \ee
It follows that
    \be
    |D(x)|\le |\Xi_2-\Xi_1| + |D_1|+ \int_{[\e,x]}|B(\tau)| \rd \tau +\int_{[\e,x]}|A(\tau)| |D(\tau)| \rd \tau, \quad \e \le x \le  \ti x^0_1-\e. \nn
    \ee
Denote $C(x):=|\Xi_2-\Xi_1| + |D_1|+ \int_{[\e,x]}|B(\tau)| \rd \tau$. Note that $\ti x^0_1 = \Delta \ti x^0_1 \le M$. Then there exists $C_2 > 0$ such that
    \be \lb{lm-ar-con8}
    C(x) < |\Xi_2-\Xi_1| + |D_1|+ M \e < C_2 \e, , \quad \mbox{when~} i \ge i_\e,
    \ee
where \x{lm-ar-con0}, \x{lm-ar-con5}, \x{lm-ar-con7} and Lemma \ref{tr-con} are used.
By the generalized Gronwall inequality \cite[Lemma 6.2 in Chapter I]{Ha80}, we obtain
    \be \lb{th-pe4}
    |D(x)|\le C(x) +\int_{[\e,x]}|A(\tau)| |C(\tau)|\left(\exp \int_{[\tau,x]}|A(u)| \rd u\right) \rd \tau, \quad \e \le x \le \ti x^0_1-\e. \nn
    \ee
Taking $x=\ti x^0_1-\e$, we have
    \be
    |D(\ti x^0_1-\e)|\le  C_2 \e\left(1 +\int_{[0,M]}2(1+|\ti{q}_0(\tau)-E|) \left(\exp \int_{[0,M]}2(1+|\ti{q}_0(u)-E|)\rd u\right) \rd \tau\right), \nn
    \ee
where \x{lm-ar-con6} and \x{lm-ar-con8} are used. Since $\ti{q}_0$ is bounded, there exists $C_3 > 0$ such that
    \[
    |\theta_i(\ti x^0_1-\e)-\theta_0(\ti x^0_1-\e)|=|D(\ti x^0_1-\e)| < C_3 \e , \quad \mbox{when~} i \ge i_\e.
    \]
Again by \x{lm-ar-con05}, we have
    \bea
    \theta_i(\ti x^i_1-)\EQ \theta_i(\ti x^0_1-\e) + \int_{[\ti x^0_1-\e,\ti x^i_1)}\cos^2 \theta_i(\tau) - (\tilde{q}_i(\tau) - E) \sin^2 \theta_i(\tau) \rd \tau, \nn \\
    \theta_0(\ti x^0_1-)\EQ \theta_0(\ti x^0_1-\e) + \int_{[\ti x^0_1-\e,\ti x^0_1)}\cos^2 \theta_0(\tau) - (\tilde{q}_0(\tau) - E) \sin^2 \theta_0(\tau) \rd \tau. \nn
    \eea
By Lemma \ref{tr-con}, we know that $\ti x^i_1 \in [\ti x^0_1-\e,\ti x^0_1+\e]$. Then there exists $C_4 > 0$ such that
    \[
    |\theta_i(\ti x^i_1-)-\theta_0(\ti x^0_1-)|< C_4 \e, , \quad \mbox{when~} i \ge i_\e.
    \]
Furthermore, we have
    \[
    \theta_i(\tilde{x}^i_1) = \theta_i(\tilde{x}^i_1-) + J(\tilde{v}^i_{1}, \theta_E(\tilde{x}^i_1-)).
    \]
By Lemma \ref{j-cont}, we obtain the continuity of $\theta_{E} \Bigl(\ti{x}_1; \ce{^{\ti V}_{\ti \Gamma}}\ti q , \Xi\Bigr)$ with respect to $\ce{^{\ti V}_{\ti \Gamma}}\ti q$ and $\Xi$, finishing the proof.
\end{proof}

\subsection{Skew-Products}

Following the idea in \cite{Se68, SY98, DZ}, we may construct a skew-product dynamical system from \x{ar-fa}. The difference from \cite{DZ} is that we must show the continuity of skew-products under the uniform topology.

Let ${\mathbb S}_{2\pi} := \R / 2 \pi \Z$ and $\mathrm{Z} := \mathrm{H}\Bigl(\ce{^{V}_{\Gamma}}q \Bigr) \times {\mathbb S}_{2\pi}$. We introduce a distance on the product space $\mathrm{Z}$ as
    \begin{eqnarray} \lb{di-prod}
    \dist\Bigl(\Bigl(\ce{^{\ti V_1}_{\ti \Gamma_1}}\ti q_1 , \vartheta_1\Bigr), \Bigl(\ce{^{\ti V_2}_{\ti \Gamma_2}}\ti q_2, \vartheta_2\Bigr)\Bigr)
    :=  \max \Bigl\{ \dist \Bigl(\ce{^{\ti V_1}_{\ti \Gamma_1}}\ti q_1, \ce{^{\ti V_2}_{\ti \Gamma_2}}\ti q_2 \Bigr), |\vartheta_1 - \vartheta_2|_{{\mathbb S}_{2\pi}}\Bigr\}
    \end{eqnarray}
where $\Bigl(\ce{^{\ti V_i}_{\ti \Gamma_i}}\ti q_i, \vartheta_i \Bigr) \in \mathrm{Z},\ i = 1, 2$, and $\dist$ in the right-hand side is constructed by \x{un-dist}. It follows from Definition \ref{ap} {\romannumeral2}) that $(\mathrm{Z}, \dist)$ is a compact metric space. The family of skew-product transformations $\Bigl\{\Phi_E^k\Bigr\}_{k \in \Z}$ on $\mathrm{Z}$ is constructed by
    \be \lb{df-phi}
    \Phi^k_{E} \Bigl(\ce{^{\ti V}_{\ti \Gamma}}\ti q, \vartheta\Bigr) := \Bigl(\ce{^{\ti V}_{\ti \Gamma}}\ti q \cdot k, \theta_{E}\Bigl(\ti x_k; \ce{^{\ti V}_{\ti \Gamma}}\ti q, \Xi\Bigr) \mod 2 \pi \Bigr),
    \ee
where $k \in \Z$, $\Bigl(\ce{^{\ti V}_{\ti \Gamma}}\ti q, \vartheta\Bigr) \in \mathrm{Z}$, and there exists $\Xi \in \R$ satisfying $\vartheta = \Xi \mod 2 \pi$. By \x{th-pe}, $\Phi^k_{E}$ is well defined for each $k \in \Z$. Moreover, we have

\begin{lemma} \lb{phi-ds}
$\Bigl\{ \Phi^k_{E} \Bigr\}_{k \in \Z}$ is a continuous skew-product dynamical system on $\mathrm{Z}$.
\end{lemma}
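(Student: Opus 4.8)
The plan is to verify the two defining properties of a continuous skew-product dynamical system: (i) the cocycle/group property, namely $\Phi^0_E = \mathrm{id}_{\mathrm{Z}}$ and $\Phi^{k_1+k_2}_E = \Phi^{k_2}_E \circ \Phi^{k_1}_E$ for all $k_1, k_2 \in \Z$; and (ii) the continuity of the map $\Phi^k_E : \mathrm{Z} \to \mathrm{Z}$ for each fixed $k$, together with the base map $\ce{^{\ti V}_{\ti \Gamma}}\ti q \mapsto \ce{^{\ti V}_{\ti \Gamma}}\ti q \cdot k$ being a homeomorphism of $\mathrm{H}\bigl(\ce{^{V}_{\Gamma}}q\bigr)$, so that $\bigl\{\Phi^k_E\bigr\}_{k \in \Z}$ is genuinely a $\Z$-action by homeomorphisms of the compact metric space $(\mathrm{Z}, \dist)$. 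First I would record that $\Phi^0_E$ is the identity, which is immediate from \x{df-phi} since $\ti x_0 = 0$ and $\theta_E(0; \cdot, \Xi) = \Xi$.

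Next I would establish the cocycle identity. Writing $\Phi^{k_1}_E\bigl(\ce{^{\ti V}_{\ti \Gamma}}\ti q, \vartheta\bigr) = \bigl(\ce{^{\ti V}_{\ti \Gamma}}\ti q \cdot k_1, \theta_E(\ti x_{k_1}; \ce{^{\ti V}_{\ti \Gamma}}\ti q, \Xi) \bmod 2\pi\bigr)$, applying $\Phi^{k_2}_E$ produces base component $\bigl(\ce{^{\ti V}_{\ti \Gamma}}\ti q \cdot k_1\bigr) \cdot k_2 = \ce{^{\ti V}_{\ti \Gamma}}\ti q \cdot (k_1 + k_2)$ by \x{dy-tr}, and fiber component $\theta_E\bigl(\ti x_{k_1+k_2} - \ti x_{k_1}; \ce{^{\ti V}_{\ti \Gamma}}\ti q \cdot k_1, \theta_E(\ti x_{k_1}; \ce{^{\ti V}_{\ti \Gamma}}\ti q, \Xi)\bigr) \bmod 2\pi$; by the concatenation relation \x{th-sk} (with the roles of $k_1, k_2$ appropriately matched) this equals $\theta_E(\ti x_{k_1+k_2}; \ce{^{\ti V}_{\ti \Gamma}}\ti q, \Xi) \bmod 2\pi$, which is exactly the fiber component of $\Phi^{k_1+k_2}_E\bigl(\ce{^{\ti V}_{\ti \Gamma}}\ti q, \vartheta\bigr)$. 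One small subtlety to check here is independence of the representative $\Xi$ of $\vartheta$ modulo $2\pi$, which follows from the periodicity relation \x{th-pe}. Combined with the group identity this also shows each $\Phi^k_E$ is a bijection with inverse $\Phi^{-k}_E$.

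For continuity I would fix $k \in \Z$ and let $\bigl(\ce{^{\ti V_i}_{\ti \Gamma_i}}\ti q_i, \vartheta_i\bigr) \to \bigl(\ce{^{\ti V_0}_{\ti \Gamma_0}}\ti q_0, \vartheta_0\bigr)$ in $(\mathrm{Z}, \dist)$. The base component is continuous because the shift $\ce{^{\ti V}_{\ti \Gamma}}\ti q \mapsto \ce{^{\ti V}_{\ti \Gamma}}\ti q \cdot k$ is continuous on $PC_{u,\delta,m,M}(\R)$ — this is the equicontinuity statement of Lemma \ref{tr-tau} (continuity in the second variable of a jointly equicontinuous family), and it maps $\mathrm{H}\bigl(\ce{^{V}_{\Gamma}}q\bigr)$ into itself by Lemma \ref{atg}(i). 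For the fiber component, lift $\vartheta_i$ to $\Xi_i \in \R$ with $\Xi_i \to \Xi_0$ (possible since convergence in $\mathbb{S}_{2\pi}$ lifts locally); then Lemma \ref{lm-ar-con} gives exactly that $\theta_E\bigl(\ti x_k; \ce{^{\ti V_i}_{\ti \Gamma_i}}\ti q_i, \Xi_i\bigr) \to \theta_E\bigl(\ti x_k; \ce{^{\ti V_0}_{\ti \Gamma_0}}\ti q_0, \Xi_0\bigr)$, and reducing mod $2\pi$ preserves convergence. Hence $\Phi^k_E$ is continuous; since $\mathrm{Z}$ is compact and $\Phi^k_E$ is a continuous bijection it is a homeomorphism. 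The main obstacle is not any one of these steps individually but the careful bookkeeping of representatives modulo $2\pi$ and the matching of indices in \x{th-sk}; the genuinely substantive input — continuity of the Prüfer angle in the weak uniform topology — has already been isolated as Lemma \ref{lm-ar-con}, so here it is only invoked.
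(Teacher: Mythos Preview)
Your proposal is correct and follows essentially the same route as the paper's proof: the group property is obtained from \x{dy-tr} and \x{th-sk}, and continuity of each $\Phi^k_E$ is reduced to Lemma~\ref{tr-tau} for the base coordinate and Lemma~\ref{lm-ar-con} for the fiber coordinate. Your version is in fact slightly more thorough, since you also record that $\Phi^0_E=\mathrm{id}_{\mathrm Z}$, spell out the independence of the lift $\Xi$ via \x{th-pe}, and note that each $\Phi^k_E$ is a homeomorphism of the compact space $\mathrm{Z}$.
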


\begin{proof}

First we assert that $\Bigl\{ \Phi^k_{E} \Bigr\}$ possesses a group structure. In fact, assume that $\Bigl(\ce{^{\ti V}_{\ti \Gamma}}\ti q, \vartheta\Bigr) \in \mathrm{Z}$ and there exists $\Xi \in \R$ satisfying $\vartheta = \Xi \mod 2 \pi$. By \x{df-phi}, \x{dy-tr} and \x{th-sk}, we have
    \beaa
    \EM \Phi_{E}^{k_1} \circ \Phi_{E}^{k_2} \Bigl(\ce{^{\ti V}_{\ti \Gamma}}\ti q, \vartheta\Bigr) \nn \\
    \EQ \Phi_{E}^{k_1} \Bigl(\ce{^{\ti V}_{\ti \Gamma}}\ti q \cdot k_2, \theta_{E}\Bigl(\ti x_{k_2}; \ce{^{\ti V}_{\ti \Gamma}}\ti q, \Xi\Bigr) \mod 2 \pi\Bigr) \\
    \EQ \Bigl(\ce{^{\ti V}_{\ti \Gamma}}\ti q \cdot k_2 \cdot k_1, \theta_{E} \Bigl(\ti x_{k_1 + k_2} - \ti x_{k_2}; \ce{^{\ti V}_{\ti \Gamma}}\ti q \cdot k_2, \theta_{E} \Bigr(\ti x_{k_2}; \ce{^{\ti V}_{\ti \Gamma}}\ti q, \Xi\Bigr)\Bigr) \mod 2 \pi\Bigr) \\
    \EQ \Phi_{E}^{k_1 + k_2} \Bigl(\ce{^{\ti V}_{\ti \Gamma}}\ti q, \vartheta\Bigr).
    \eeaa

Now we aim to prove that for each $k \in \Z$, $\Phi^k_{E}: \mathrm{Z} \to \mathrm{Z}$ is continuous. We make the following claims.

\medskip
\fbox{Claim 1}\ : $\ce{^{\ti V}_{\ti \Gamma}}\ti q \cdot k: \mathrm{H}\Bigl(\ce{^{V}_{\Gamma}}q \Bigr) \to \mathrm{H}\Bigl(\ce{^{V}_{\Gamma}}q \Bigr)$ is continuous. This is obvious from Lemma \ref{tr-tau}.

\medskip
\fbox{Claim 2}\ : $\theta_{E}\Bigl(\ti x_k; \ce{^{\ti V}_{\ti \Gamma}}\ti q, \Xi\Bigr) \mod 2 \pi :\mathrm{Z} \to {\mathbb S}_{2\pi}$ is continuous. This is obvious from Lemma \ref{lm-ar-con}.

Due to Claim 1 and Claim 2, we obtain the desired result.
\end{proof}

Introduce the observable $F_{E}$ from $\mathrm{Z}$ to $\R$ as
    \be \lb{df-f}
    F_{E} \Bigl(\ce{^{\ti V}_{\ti \Gamma}}\ti q, \vartheta \Bigr) := \theta_{E}\Bigl(\ti{x}_1; \ce{^{\ti V}_{\ti \Gamma}}\ti q, \Xi\Bigr) - \Xi, \quad \Bigl(\ce{^{\ti V}_{\ti \Gamma}}\ti q, \vartheta\Bigr) \in \mathrm{Z},
    \ee
where $\Xi \in \R$ satisfies $\vartheta = \Xi \mod 2 \pi$. By \x{th-pe}, $F_{E} \Bigl(\ce{^{\ti V}_{\ti \Gamma}}\ti q, \vartheta\Bigr)$ is well defined on $\mathrm{Z}$. Furthermore, we have

\begin{lemma}  \lb{f-con}
$F_{E} \Bigl(\ce{^{\ti V}_{\ti \Gamma}}\ti q, \vartheta\Bigr)$ is continuous on $\mathrm{Z}$.
\end{lemma}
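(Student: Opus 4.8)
The plan is to reduce the continuity of $F_E$ on the compact metric space $\mathrm{Z}=\mathrm{H}\Bigl(\ce{^{V}_{\Gamma}}q\Bigr)\times{\mathbb S}_{2\pi}$ to the joint continuity of $\theta_E(\ti x_1;\cdot,\cdot)$ already established in Lemma \ref{lm-ar-con} (applied with $k=1$). The point is that $F_E$ takes values in $\R$, not in ${\mathbb S}_{2\pi}$, and by the periodicity relation \x{th-pe} it is insensitive to the choice of a lift $\Xi\in\R$ of $\vartheta\in{\mathbb S}_{2\pi}$; this is exactly why it is well defined on $\mathrm{Z}$, as already noted. The only real issue is to lift a convergent sequence in $\mathrm{Z}$ to a convergent sequence in $\mathrm{H}\Bigl(\ce{^{V}_{\Gamma}}q\Bigr)\times\R$ in a coherent way.

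Concretely, I would introduce the lifted observable $\widetilde F_E:\mathrm{H}\Bigl(\ce{^{V}_{\Gamma}}q\Bigr)\times\R\to\R$, $\widetilde F_E\Bigl(\ce{^{\ti V}_{\ti\Gamma}}\ti q,\Xi\Bigr):=\theta_E\Bigl(\ti x_1;\ce{^{\ti V}_{\ti\Gamma}}\ti q,\Xi\Bigr)-\Xi$. Being the difference of $\theta_E(\ti x_1;\cdot,\cdot)$, which is continuous by Lemma \ref{lm-ar-con}, and the coordinate projection onto $\Xi$, which is trivially continuous, $\widetilde F_E$ is continuous; moreover $F_E\Bigl(\ce{^{\ti V}_{\ti\Gamma}}\ti q,\vartheta\Bigr)=\widetilde F_E\Bigl(\ce{^{\ti V}_{\ti\Gamma}}\ti q,\Xi\Bigr)$ whenever $\vartheta=\Xi \mod 2\pi$. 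Now take $\Bigl(\ce{^{\ti V_i}_{\ti\Gamma_i}}\ti q_i,\vartheta_i\Bigr)\to\Bigl(\ce{^{\ti V_0}_{\ti\Gamma_0}}\ti q_0,\vartheta_0\Bigr)$ in $\mathrm{Z}$, fix a lift $\Xi_0$ of $\vartheta_0$, and for all sufficiently large $i$ let $\Xi_i$ be the unique lift of $\vartheta_i$ lying in $(\Xi_0-\pi,\Xi_0+\pi)$; then $|\Xi_i-\Xi_0|=|\vartheta_i-\vartheta_0|_{{\mathbb S}_{2\pi}}\to 0$, so $\Bigl(\ce{^{\ti V_i}_{\ti\Gamma_i}}\ti q_i,\Xi_i\Bigr)\to\Bigl(\ce{^{\ti V_0}_{\ti\Gamma_0}}\ti q_0,\Xi_0\Bigr)$ in $\mathrm{H}\Bigl(\ce{^{V}_{\Gamma}}q\Bigr)\times\R$, and the continuity of $\widetilde F_E$ gives $F_E\Bigl(\ce{^{\ti V_i}_{\ti\Gamma_i}}\ti q_i,\vartheta_i\Bigr)=\widetilde F_E\Bigl(\ce{^{\ti V_i}_{\ti\Gamma_i}}\ti q_i,\Xi_i\Bigr)\to\widetilde F_E\Bigl(\ce{^{\ti V_0}_{\ti\Gamma_0}}\ti q_0,\Xi_0\Bigr)=F_E\Bigl(\ce{^{\ti V_0}_{\ti\Gamma_0}}\ti q_0,\vartheta_0\Bigr)$. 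Since $\mathrm{Z}$ is metric, sequential continuity is continuity, and the proof is complete. (Equivalently, one may note that the canonical surjection $\mathrm{H}\Bigl(\ce{^{V}_{\Gamma}}q\Bigr)\times\R\to\mathrm{Z}$ is open and that $\widetilde F_E$ is constant on its fibres by \x{th-pe}, so $F_E$ is continuous as the factorization of the continuous map $\widetilde F_E$ through a quotient map.)

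The step I expect to require the most care is, somewhat ironically, not an analytic one but this lifting bookkeeping: choosing the $\Xi_i$ uniformly enough that convergence in ${\mathbb S}_{2\pi}$ genuinely lifts to convergence in $\R$. All the analytic difficulty — propagating the comparison of the arguments through the continuous evolution of \x{ar-fa} and across the $\delta$-interactions while only assuming the weak uniform topology — has already been absorbed into the proof of Lemma \ref{lm-ar-con} via the generalized Gronwall inequality, so nothing beyond that lemma and \x{th-pe} is needed here.
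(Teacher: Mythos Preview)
Your argument is correct and follows the same approach as the paper, which simply says the continuity is obvious from Lemma~\ref{lm-ar-con}. Your explicit lifting of $\vartheta_i\to\vartheta_0$ to $\Xi_i\to\Xi_0$ and appeal to \x{th-pe} just spells out what the paper leaves implicit.
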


\begin{proof}
This is obvious by Lemma \ref{lm-ar-con}.
\end{proof}

By \x{j-def}, we have
    \[
    F_{E} \Bigl(\ce{^{\ti V}_{\ti \Gamma}}\ti q, \vartheta\Bigr) = \theta_{E} \Bigl(\ti{x}_1-; \ce{^{\ti V}_{\ti \Gamma}}\ti q, \Xi\Bigr) - \Xi + J(\ti{c}_1, \theta_E(\ti{x}_1-)).
    \]
where $\Xi \in \R$ satisfies $\vartheta = \Xi \mod 2 \pi$. By the construction above and Lemma \ref{den-l}, we reduce the existence of rotation numbers to that of the following ergodic limit with respect to the skew-product dynamical system $\Bigl\{ \Phi_{E}^k \Bigr\}_{k \in \Z}$.

\begin{lemma} \lb{eqs}
Assume that $\Bigl(\ce{^{\ti V}_{\ti \Gamma}}\ti q, \vartheta\Bigr) \in \mathrm{H}\Bigl(\ce{^{V}_{\Gamma}}q \Bigr) \times {\mathbb S}_{2\pi}$ and $\Xi \in \R$ satisfies $\vartheta = \Xi \mod 2 \pi$. Then we have the following relation:
    \[
    \lim_{n \to +\oo} \frac{\theta_{E} \Bigl(\ti x_n; \ce{^{\ti V}_{\ti \Gamma}}\ti q, \Xi\Bigr) - \Xi}{\ti x_n} = [\Gamma] \lim_{n \to +\oo} \frac{1}{n} \sum_{k=0}^{n-1} F_{E} \Bigl(\Phi_{E}^k \Bigl(\ce{^{\ti V}_{\ti \Gamma}}\ti q, \vartheta\Bigr)\Bigr).
    \]
That is, if one of the limits exists, then the other one exists as well and they are equal.
\end{lemma}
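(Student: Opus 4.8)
The plan is to establish the identity by telescoping the Prüfer angle along consecutive points of $\ti\Gamma$, relating the cumulative change to a Birkhoff sum for the skew-product, and then converting the $\ti x_n$-normalization to an $n$-normalization using the density $[\Gamma]$. The key decomposition is the cocycle identity \x{th-sk} in Lemma \ref{lm-ar-mix}, which allows the total angular displacement to be written as a sum of one-step displacements along the orbit of $\Bigl(\ce{^{\ti V}_{\ti \Gamma}}\ti q, \vartheta\Bigr)$ under $\Bigl\{\Phi^k_E\Bigr\}_{k \in \Z}$.

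First I would fix $\Bigl(\ce{^{\ti V}_{\ti \Gamma}}\ti q, \vartheta\Bigr) \in \mathrm{H}\Bigl(\ce{^{V}_{\Gamma}}q \Bigr) \times {\mathbb S}_{2\pi}$ together with a lift $\Xi \in \R$, and abbreviate $\ti\Gamma = \{\ti x_k\}_{k \in \Z}$ with $\ti x_0 = 0$. Write $\Theta_k := \theta_E\Bigl(\ti x_k; \ce{^{\ti V}_{\ti \Gamma}}\ti q, \Xi\Bigr)$ for $k \ge 0$. Applying \x{th-sk} with $k_1 = 1$ and $k_2 = k$ gives
    \[
    \Theta_{k+1} = \theta_E\Bigl(\ti x_{k+1} - \ti x_k; \ce{^{\ti V}_{\ti \Gamma}}\ti q \cdot k, \Theta_k\Bigr),
    \]
and since $\ti x_{k+1} - \ti x_k$ is precisely the first point of the shifted set $\ti\Gamma \cdot k$ (by the definition \x{sft-l}), the right-hand side equals $\theta_E\Bigl((\ti\Gamma\cdot k)_1; \ce{^{\ti V}_{\ti \Gamma}}\ti q \cdot k, \Theta_k\Bigr)$. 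Therefore
    \[
    \Theta_{k+1} - \Theta_k = F_E\Bigl(\ce{^{\ti V}_{\ti \Gamma}}\ti q \cdot k, \Theta_k \bmod 2\pi\Bigr) = F_E\Bigl(\Phi_E^k\Bigl(\ce{^{\ti V}_{\ti \Gamma}}\ti q, \vartheta\Bigr)\Bigr),
    \]
where the first equality is the definition \x{df-f} of $F_E$ (which by \x{th-pe} depends only on the base point and on $\Theta_k \bmod 2\pi$), and the second uses the group-structure computation in the proof of Lemma \ref{phi-ds} to identify $\Phi_E^k\Bigl(\ce{^{\ti V}_{\ti \Gamma}}\ti q, \vartheta\Bigr) = \Bigl(\ce{^{\ti V}_{\ti \Gamma}}\ti q \cdot k, \Theta_k \bmod 2\pi\Bigr)$. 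Summing from $k=0$ to $n-1$ telescopes to $\Theta_n - \Xi = \sum_{k=0}^{n-1} F_E\Bigl(\Phi_E^k\Bigl(\ce{^{\ti V}_{\ti \Gamma}}\ti q, \vartheta\Bigr)\Bigr)$.

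Next I would rewrite the quotient as
    \[
    \frac{\Theta_n - \Xi}{\ti x_n} = \frac{n}{\ti x_n} \cdot \frac{1}{n}\sum_{k=0}^{n-1} F_E\Bigl(\Phi_E^k\Bigl(\ce{^{\ti V}_{\ti \Gamma}}\ti q, \vartheta\Bigr)\Bigr).
    \]
By Lemma \ref{den-l} (applied to $\ti\Gamma \in \mathrm{H}(\Gamma)$, using that $\mathrm{H}\Bigl(\ce{^{V}_{\Gamma}}q\Bigr)$ projects into $\mathrm{H}(\Gamma)$ by Lemma \ref{lm-pcdap} {\romannumeral3})), one has $\#(\ti\Gamma \cap [0, \ti x_n)) = n$, hence $\ti x_n / n \to [\Gamma]^{-1} \in [m, M]$, a strictly positive finite limit, as $n \to +\infty$. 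Consequently $n/\ti x_n \to [\Gamma]$, and the product rule for limits gives the claimed equivalence: the Birkhoff average $\frac1n \sum_{k=0}^{n-1} F_E(\Phi_E^k(\cdots))$ converges if and only if $(\Theta_n - \Xi)/\ti x_n$ does, and in that case the two limits differ exactly by the factor $[\Gamma]$.

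The only genuinely delicate point is making sure the telescoping step is applied to the correct shifted data — that is, verifying that $F_E$ evaluated at $\Phi_E^k\Bigl(\ce{^{\ti V}_{\ti \Gamma}}\ti q, \vartheta\Bigr)$ really produces the one-step increment $\Theta_{k+1} - \Theta_k$ rather than an increment along the original set. This is handled entirely by the cocycle identity \x{th-sk} and the shift definition \x{sh-tr}, together with the periodicity relation \x{th-pe} which guarantees $F_E$ is well defined on $\mathrm{Z}$ (independent of the choice of lift $\Xi$); everything else is the elementary observation that $\ti x_n/n$ has a positive finite limit, which is exactly Lemma \ref{den-l}. No continuity of $F_E$ is needed for this particular lemma, only its pointwise definition.
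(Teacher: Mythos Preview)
Your proof is correct and follows essentially the same route as the paper: telescope $\theta_E(\ti x_n;\cdots)-\Xi$ into one-step increments, identify each increment with $F_E\bigl(\Phi_E^k(\cdots)\bigr)$ via the cocycle identity \x{th-sk} and the definitions \x{df-phi}, \x{df-f}, and then use Lemma~\ref{den-l} to convert between normalization by $\ti x_n$ and by $n$. The only (inconsequential) difference is the order---the paper first passes from $\ti x_n$ to $n$ and then telescopes, while you telescope first.
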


\begin{proof}
By Lemma \ref{den-l}, we have
    \[
    \lim_{n \to +\oo} \frac{\theta_{E} \Bigl(\tilde{x}_n; \ce{^{\ti V}_{\ti \Gamma}}\ti q, \Xi\Bigr) - \Xi}{\tilde{x}_n} = [\Gamma] \lim_{n \to +\oo} \frac{\theta_{E} \Bigl(\tilde{x}_n; \ce{^{\ti V}_{\ti \Gamma}}\ti q, \Xi\Bigr) - \Xi}{n},
    \]
provided one of limits exists. Furthermore,
    \beaa
    \EM \theta_{E} \Bigl(\tilde{x}_n; \ce{^{\ti V}_{\ti \Gamma}}\ti q, \Xi\Bigr) - \Xi \nn \\
    \EQ \sum_{k=0}^{n-1} \Bigl(\theta_{E} \Bigl(\tilde{x}_{k+1}; \ce{^{\ti V}_{\ti \Gamma}}\ti q, \Xi\Bigr) - \theta_{E} \Bigl(\tilde{x}_{k}; \ce{^{\ti V}_{\ti \Gamma}}\ti q, \Xi\Bigr)\Bigr) \\
    \EQ \sum_{k=0}^{n-1} \Bigl(\theta_{E} \Bigl(\tilde{x}_{k+1} - \tilde{x}_{k}; \ce{^{\ti V}_{\ti \Gamma}}\ti q \cdot k, \theta_{E} \Bigl(\tilde{x}_{k}; \ce{^{\ti V}_{\ti \Gamma}}\ti q, \Xi\Bigr)\Bigr) - \theta_{E} \Bigl(\tilde{x}_{k};\ce{^{\ti V}_{\ti \Gamma}}\ti q, \Xi\Bigr)\Bigr) \\
    \EQ \sum_{k=0}^{n-1} F_{E} \Bigl(\ce{^{\ti V}_{\ti \Gamma}}\ti q \cdot k, \theta_{E} \Bigl(\tilde{x}_{k}; \ce{^{\ti V}_{\ti \Gamma}}\ti q, \Xi\Bigr) \mod 2 \pi\Bigr) \\
    \EQ \sum_{k=0}^{n-1} F_{E} \Bigl(\Phi^k_{E} \Bigl(\ce{^{\ti V}_{\ti \Gamma}}\ti q, \vartheta\Bigr)\Bigr).
    \eeaa
The proof is completed.
\end{proof}

\medskip
%
%
%
%
%

\section{Rotation Number}\label{sec.5}

In this section we discuss the existence of the rotation number and its continuous dependence on the spectral parameter $E$. Much of the key preparatory work has already been done. In particular, Lemmas~\ref{phi-ds}--\ref{eqs} will be crucial in the discussion that follows.

We revisit again \x{sys}. We know about the solution that $\psi \in C(\R)$ and $\psi'(x)=\psi'(x+) \in PC(\R)$. Define the right derivative of $\psi$ by
    \[
    D^+\psi(x):=\lim_{h \to 0+} \frac{\psi(x+h)-\psi(x)}{x}.
    \]
$D^+u$ in \cite{Se12} is the same as $u^\bullet$ in \cite{MZ13}.  It follows that $D^+\psi(x)=\psi'(x+)$ and $x \mapsto D^+\psi(x)$ is right-continuous. By the choice of a suitable homotopy defined in \x{homo}, we have a well defined argument as
    \[
    \theta_E(x):=\arg(D^+\psi(x)+\mathrm{i}\psi(x)).
    \]
The evolution of $\theta_E(x)$ is found to be
    \begin{equation}
    \left\lbrace \begin{array}{ll}
    \theta'(x) = \cos^2 \theta(x) - (q(x)-E) \sin^2 \theta(x), \qq & x \in \mathbb{R} \setminus \Gamma, \\
    \theta(x_n) - \theta(x_n-) = J(v_n,\theta(x_n-)), \qq & x = x_n \in \Gamma, \nn
    \end{array} \right.
    \end{equation}
where $J(v,\cdot)$ is defined in \x{j-def}. If the ergodic limit
    \[
    \lim_{x \to +\oo}\frac{\theta_E(x)-\theta_E(0)}{x}
    \]
exists, then we call it the \emph{rotation number} of \x{sys} and denote it by $\rho(E)$.

Our goal is to show that the rotation number indeed exists and that it depends continuously on $E$. We devote a subsection to each of these two items. After we have overcome the difficulty caused by jump discontinuities and $\delta$-interactions, the remainder of the proof follows the spirit of Johnson and Moser. For the sake of readability, we present the following fundamental ergodic theorem due to Johnson and Moser.

\begin{lemma} \lb{JM-thm} {\rm  \cite{JM82}}
Let $\{ \varphi^k \}_{k \in \Z}$ be a continuous discrete-time dynamical system on a compact metric space $X$. Then for any continuous function $f$ on $X$ satisfying
    \be \lb{c1}
    \int_X f \, \dmu = 0 \nn
    \ee
for all invariant Borel probability measures $\mu$ under $\{ \varphi^k \}$, one has
    \be \lb{c2}
    \lim_{n \to +\oo} \frac{1}{n} \sum_{k=0}^{n-1} f(\varphi^k(x)) = 0 \nn
    \ee
uniformly for all $x \in X$.
\end{lemma} 

\subsection{Existence}

To show the existence of rotation numbers, inspired by \x{th-di} and Lemma \ref{eqs}, we introduce the following notation.
    \begin{equation} \label{f-s}
    F_{E}^* \Bigl(\ce{^{\ti V}_{\ti \Gamma}}\ti q, \vartheta\Bigr) := \lim_{n \to +\oo} \frac{1}{n} \sum_{k=0}^{n-1} F_{E} \Bigl(\Phi_{E}^k \Bigl(\ce{^{\ti V}_{\ti \Gamma}}\ti q, \vartheta\Bigr)\Bigr), \quad \Bigl(\ce{^{\ti V}_{\ti \Gamma}}\ti q, \vartheta\Bigr) \in \mathrm{Z}, \nn
    \end{equation}
provided the limit exists. For $\ce{^{\ti V}_{\ti \Gamma}}\ti q \in \mathrm{H}\Bigl(\ce{^{V}_{\Gamma}}q \Bigr)$ and $\Xi \in \R$, denote
    \begin{equation} \label{f-dia}
    F_{E}^\diamond \Bigl(\ce{^{\ti V}_{\ti \Gamma}}\ti q, \Xi\Bigr) := \lim_{n \to +\oo} \frac{\theta_{E} \Bigl(\ti x_n; \ce{^{\ti V}_{\ti \Gamma}}\ti q, \Xi\Bigr) - \Xi}{\ti x_n},
    \end{equation}
provided the limit exists. By \x{th-pe}, we obtain the following:

\begin{lemma} \label{inv-th}
If $F_{E}^\diamond \Bigl(\ce{^{\ti V}_{\ti \Gamma}}\ti q, \Xi_0\Bigr)$ exists for $\Xi_0 \in \R$, then $F_{E}^\diamond \Bigl(\ce{^{\ti V}_{\ti \Gamma}}\ti q, \Xi\Bigr)$ exists for all $\Xi \in \R$ and is independent of the choice of $\Xi \in \R$.
\end{lemma}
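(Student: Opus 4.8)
The plan is to deduce the statement from two facts already in hand: the $2\pi$-periodicity identity \x{th-pe} and the monotonicity \x{th-con} of the initial-value map $\Xi\mapsto\theta_E\bigl(\,\cdot\,;\ce{^{\ti V}_{\ti \Gamma}}\ti q,\Xi\bigr)$. The crucial observation is that changing $\Xi$ alters the numerator $\theta_E\bigl(\ti x_n;\ce{^{\ti V}_{\ti \Gamma}}\ti q,\Xi\bigr)-\Xi$ by an amount bounded uniformly in $n$, while the denominator $\ti x_n$ tends to $+\oo$; hence the existence and value of the limit \x{f-dia} cannot depend on $\Xi$.

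First I would reduce to the interval $\Xi\in[\Xi_0,\Xi_0+2\pi)$. For an arbitrary $\Xi\in\R$ write $\Xi=\Xi'+2k\pi$ with $k\in\Z$ and $\Xi'\in[\Xi_0,\Xi_0+2\pi)$. Applying \x{th-pe} at $x=\ti x_n$ gives
$$\theta_E\Bigl(\ti x_n;\ce{^{\ti V}_{\ti \Gamma}}\ti q,\Xi\Bigr)-\Xi=\theta_E\Bigl(\ti x_n;\ce{^{\ti V}_{\ti \Gamma}}\ti q,\Xi'\Bigr)-\Xi'\qquad\mbox{for every }n,$$
so $F_E^\diamond\bigl(\ce{^{\ti V}_{\ti \Gamma}}\ti q,\Xi\bigr)$ exists exactly when $F_E^\diamond\bigl(\ce{^{\ti V}_{\ti \Gamma}}\ti q,\Xi'\bigr)$ does, with the same value. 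Thus it suffices to treat $\Xi\in[\Xi_0,\Xi_0+2\pi)$.

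Fix such a $\Xi$. By the monotonicity \x{th-con}, for every $n$,
$$\theta_E\Bigl(\ti x_n;\ce{^{\ti V}_{\ti \Gamma}}\ti q,\Xi_0\Bigr)\le\theta_E\Bigl(\ti x_n;\ce{^{\ti V}_{\ti \Gamma}}\ti q,\Xi\Bigr)\le\theta_E\Bigl(\ti x_n;\ce{^{\ti V}_{\ti \Gamma}}\ti q,\Xi_0+2\pi\Bigr),$$
and the right-hand side equals $\theta_E\bigl(\ti x_n;\ce{^{\ti V}_{\ti \Gamma}}\ti q,\Xi_0\bigr)+2\pi$ by \x{th-pe} (with $k=1$). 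Subtracting $\Xi$ and using $\Xi_0-\Xi\in[-2\pi,0]$ yields
$$\Bigl|\Bigl(\theta_E\bigl(\ti x_n;\ce{^{\ti V}_{\ti \Gamma}}\ti q,\Xi\bigr)-\Xi\Bigr)-\Bigl(\theta_E\bigl(\ti x_n;\ce{^{\ti V}_{\ti \Gamma}}\ti q,\Xi_0\bigr)-\Xi_0\Bigr)\Bigr|\le 2\pi\qquad\mbox{for all }n.$$
Since $\ce{^{\ti V}_{\ti \Gamma}}\ti q\in\mathrm{H}\bigl(\ce{^{V}_{\Gamma\,}}q\bigr)\subset PC_{u,\delta,m,M}(\R)$ we have $\ti\Gamma=\{\ti x_n\}\in L_{m,M}(\Z)$, so $\ti x_0=0$ and $\ti x_n\ge nm\to+\oo$. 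Dividing the last display by $\ti x_n$ shows that $\frac1{\ti x_n}\bigl(\theta_E(\ti x_n;\ce{^{\ti V}_{\ti \Gamma}}\ti q,\Xi)-\Xi\bigr)$ and $\frac1{\ti x_n}\bigl(\theta_E(\ti x_n;\ce{^{\ti V}_{\ti \Gamma}}\ti q,\Xi_0)-\Xi_0\bigr)$ differ by a quantity tending to $0$; as the second converges to $F_E^\diamond\bigl(\ce{^{\ti V}_{\ti \Gamma}}\ti q,\Xi_0\bigr)$ by hypothesis, so does the first. Together with the reduction of the previous paragraph, this gives the lemma.

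I do not expect a genuine obstacle: the whole argument is the bounded-difference plus $\ti x_n\to+\oo$ mechanism. The two points that deserve a little attention are to make sure the reduction step really exhausts all real $\Xi$ (allowing both signs of $k$), and to record that the membership $\ti\Gamma\in L_{m,M}(\Z)$ — which holds because the hull sits inside $PC_{u,\delta,m,M}(\R)$ — is precisely what delivers $\ti x_0=0$ and $\ti x_n\to+\oo$, so that the uniform bound $2\pi$ is genuinely annihilated in the limit.
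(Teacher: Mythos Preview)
Your proof is correct and follows essentially the same approach as the paper: use the $2\pi$-periodicity \x{th-pe} to reduce to a fundamental interval, then sandwich via the monotonicity \x{th-con} to obtain a uniform $2\pi$ bound on the numerator difference, which vanishes after division by $\ti x_n\to+\oo$. The only cosmetic difference is that the paper locates the integer $k_\Xi$ with $\Xi_0+2k_\Xi\pi\le\Xi<\Xi_0+2(k_\Xi+1)\pi$ and sandwiches directly, whereas you first reduce to $[\Xi_0,\Xi_0+2\pi)$ and then sandwich; the substance is identical.
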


\begin{proof}
By \x{th-pe}, we know that $F_{E}^\diamond \Bigl(\ce{^{\ti V}_{\ti \Gamma}}\ti q, \Xi_0+2k \pi\Bigr)$ exists for all $k \in \Z$. Then for any $\Xi \in \R$, there exists $k_{\Xi} \in \Z$ such that
    \[
    \Xi_0 + 2 k_{\Xi} \pi \le \Xi < \Xi_0 + 2 (k_{\Xi} + 1) \pi.
    \]
By \x{th-pe} and \x{th-con}, for all $n \in \N$, we have
    \[
    \theta_{E} \Bigl(\ti x_n; \ce{^{\ti V}_{\ti \Gamma}}\ti q, \Xi_0+2k_{\Xi} \pi\Bigr) \le \theta_{E} \Bigl(\ti x_n; \ce{^{\ti V}_{\ti \Gamma}}\ti q, \Xi \Bigr) < \theta_{E} \Bigl(\ti x_n; \ce{^{\ti V}_{\ti \Gamma}}\ti q, \Xi_0+2k_{\Xi} \pi\Bigr) + 2 \pi.
    \]
This implies that for all $\Xi \in \R$, we have
    \[
    F_{E}^\diamond \Bigl(\ce{^{\ti V}_{\ti \Gamma}}\ti q, \Xi \Bigr) \equiv F_{E}^\diamond \Bigl(\ce{^{\ti V}_{\ti \Gamma}}\ti q, \Xi_0 \Bigr) .
    \]
The proof is completed.
\end{proof}

We are now in a position to obtain the main result of this paper.

\begin{theorem} \lb{main}
Let $\ce{^{V}_{\Gamma}}q \in PC_{\delta,m,M,ap}(\R)$. For any $\Xi \in \R$, the limit $F_{E}^\diamond \Bigl(\ce{^{V}_{\Gamma}} q, \Xi \Bigr)$ exists and is independent of the choice of $\Xi \in \R$. Thus the rotation number $\rho(E)$ is well defined.
\end{theorem}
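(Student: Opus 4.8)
The plan is to reduce the statement, via Lemmas \ref{eqs} and \ref{inv-th}, to the convergence of a single Birkhoff average for the skew-product $\{\Phi_E^k\}_{k\in\Z}$ on the compact metric space $\mathrm{Z}=\mathrm{H}\bigl(\ce{^{V}_{\Gamma}}q\bigr)\times{\mathbb S}_{2\pi}$, and then to prove that convergence by the Johnson--Moser mechanism: the fibre maps of $\Phi_E$ are monotone and $2\pi$-equivariant, and the base action is uniquely ergodic. In detail, by Lemma \ref{inv-th} it is enough to produce the limit $F_E^\diamond\bigl(\ce{^{V}_{\Gamma}}q,\Xi\bigr)$ for one value of $\Xi$. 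Writing $a_n\bigl(\ce{^{\ti V}_{\ti \Gamma}}\ti q,\Xi\bigr):=\theta_E\bigl(\ti x_n;\ce{^{\ti V}_{\ti \Gamma}}\ti q,\Xi\bigr)-\Xi$, the computation inside the proof of Lemma \ref{eqs} gives $a_n\bigl(\ce{^{\ti V}_{\ti \Gamma}}\ti q,\Xi\bigr)=\sum_{k=0}^{n-1}F_E\bigl(\Phi_E^k(\ce{^{\ti V}_{\ti \Gamma}}\ti q,\vartheta)\bigr)$ whenever $\vartheta=\Xi\bmod 2\pi$, and Lemma \ref{eqs} then shows that $F_E^\diamond\bigl(\ce{^{V}_{\Gamma}}q,\Xi\bigr)$ exists (and equals $[\Gamma]$ times the limit) if and only if $\tfrac1n\,a_n\bigl(\ce{^{V}_{\Gamma}}q,\Xi\bigr)$ converges as $n\to+\oo$. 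So the whole matter reduces to this convergence.

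First I would record the structure of $a_n$ on $W:=\mathrm{H}\bigl(\ce{^{V}_{\Gamma}}q\bigr)$. Each map $a_n(\cdot,0):W\to\R$ is continuous by Lemma \ref{lm-ar-con}, and $\bigl|\sum_{k=0}^{n-1}F_E\circ\Phi_E^k\bigr|\le n\|F_E\|_{\oo}$ because $F_E$ is continuous on the compact set $\mathrm{Z}$, so $\rho(w):=\limsup_n a_n(w,0)/n$ is a bounded Borel function on $W$. Moreover, combining the monotonicity \x{th-con} with the $2\pi$-equivariance \x{th-pe} exactly as in the proof of Lemma \ref{inv-th} (for $\Xi_1\le\Xi_2<\Xi_1+2\pi$ one has $\theta_E(\ti x_n;w,\Xi_1)\le\theta_E(\ti x_n;w,\Xi_2)<\theta_E(\ti x_n;w,\Xi_1)+2\pi$), the upper limit $\limsup_n a_n(w,\Xi)/n$ does not depend on $\Xi$, hence equals $\rho(w)$ for every $\Xi$. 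Finally, by Lemma \ref{atg} the time-one shift $T$ on $W$ is uniquely ergodic, with the Haar measure $\nu$ its only invariant probability measure.

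The core step will be to show that $\int_{\mathrm{Z}}F_E\,d\mu$ takes one and the same value for all $\Phi_E$-invariant probability measures $\mu$ on $\mathrm{Z}$. Every such $\mu$ projects to a $T$-invariant measure on $W$, hence to $\nu$. If $\mu$ is ergodic, Birkhoff's theorem gives $\tfrac1n\sum_{k=0}^{n-1}F_E\circ\Phi_E^k\to\int F_E\,d\mu$ at $\mu$-a.e.\ point; along a point $(w,\vartheta)$ this average equals $\tfrac1n a_n(w,\Xi)$, whose upper limit is $\rho(w)$, so $\int F_E\,d\mu=\rho(w)$ for $\mu$-a.e.\ $(w,\vartheta)$; integrating over $\mathrm{Z}$ (using that $\mu$ projects to $\nu$ and that $\rho$ is a function of $w$ alone) yields $\int F_E\,d\mu=\int_W\rho\,d\nu=:\rho_0$, independent of $\mu$. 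By the ergodic decomposition, $\int F_E\,d\mu=\rho_0$ for every invariant $\mu$. To obtain convergence at the base point, suppose $\tfrac1n a_n\bigl(\ce{^{V}_{\Gamma}}q,\Xi\bigr)$ does not converge; choosing subsequences along which it tends to its upper and lower limits and passing, by compactness of the space of probability measures on the compact set $\mathrm{Z}$, to weak-$*$ limits of the empirical measures $\tfrac1n\sum_{k=0}^{n-1}\delta_{\Phi_E^k(\ce{^{V}_{\Gamma}}q,\vartheta)}$, a Krylov--Bogolyubov argument produces $\Phi_E$-invariant $\mu_+,\mu_-$ with $\int F_E\,d\mu_+>\int F_E\,d\mu_-$, contradicting the preceding identity. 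Hence $\lim_n\tfrac1n a_n\bigl(\ce{^{V}_{\Gamma}}q,\Xi\bigr)$ exists; by Lemma \ref{eqs} the limit $F_E^\diamond\bigl(\ce{^{V}_{\Gamma}}q,\Xi\bigr)=[\Gamma]\rho_0$ exists, by Lemma \ref{inv-th} it is independent of $\Xi$, and by \x{th-di} it coincides with $\lim_{x\to+\oo}\bigl(\theta_E(x)-\theta_E(0)\bigr)/x$, so the rotation number $\rho(E)$ is well defined.

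I expect the genuine difficulty to be the core step, since $\Phi_E$ is in general not uniquely ergodic on $\mathrm{Z}$, so one cannot simply appeal to unique ergodicity of the whole system; what makes it work is that monotonicity \x{th-con} together with $2\pi$-equivariance \x{th-pe} force $\limsup_n\bigl(\theta_E(\ti x_n;w,\Xi)-\Xi\bigr)/n$ to depend on $w$ only, which, combined with unique ergodicity of the base $(W,T)$ (Lemma \ref{atg}), pins down $\int F_E\,d\mu$. This is the classical Johnson--Moser argument; the only additional point to verify here is that all the continuity inputs (Lemmas \ref{lm-ar-con}, \ref{phi-ds}, \ref{f-con}) remain valid under the weaker uniform topology used in this paper, and these have already been supplied.
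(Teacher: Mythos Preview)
Your proposal is correct and follows essentially the same Johnson--Moser route as the paper: use monotonicity \x{th-con} and $2\pi$-equivariance \x{th-pe} to make the (upper) limit fibre-independent, then combine unique ergodicity of the base with Birkhoff to show $\int_{\mathrm Z}F_E\,d\mu$ is the same for every $\Phi_E$-invariant $\mu$, and finally deduce convergence of the Birkhoff averages. The only cosmetic difference is that where you unfold the last step via a Krylov--Bogolyubov contradiction on empirical measures (yielding convergence at the base point), the paper instead invokes \cite[Lemma~4.4]{JM82} directly on $\hat F_E=F_E-\rho(E)$, which gives the slightly stronger uniform convergence over all of $\mathrm Z$; either version suffices for the theorem as stated.
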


\begin{proof}
By the Krylov-Bogoliubov theorem and Lemma \ref{phi-ds}, there exists an invariant Borel probability measure under $\Bigl\{ \Phi^k_{E} \Bigr\}_{k \in \Z}$, denoted by $\mu$. Then by the Birkhoff ergodic theorem, there exists a Borel set $\mathrm{Z}_\mu \subset \mathrm{Z}$, which depends on the measure $\mu$, such that $\mu (\mathrm{Z}_\mu) = 1$ and $F_{E}^* \Bigl(\ce{^{\ti V}_{\ti \Gamma}}\ti q, \vartheta\Bigr)$ exists for all $\Bigl(\ce{^{\ti V}_{\ti \Gamma}}\ti q, \vartheta\Bigr)\in \mathrm{Z}_\mu$. Furthermore, $F^*_E$ is integrable and satisfies
    \begin{equation} \label{ex-rn2}
    \int_{\mathrm{Z}} F_{E}^*\rd \mu = \int_{\mathrm{Z}} F_{E}\rd \mu =: \rho(E,\mu).
    \end{equation}

Due to Lemma \ref{inv-th}, $\mathrm{Z}_\mu$ can be written in the form $\mathrm{Z}_\mu = \mathrm{E}_\mu \times \mathbb{S}_{2 \pi}$, where $\mathrm{E}_\mu$ is a Borel set in $\mathrm{H}\Bigl(\ce{^{V}_{\Gamma}}q \Bigr)$. Let $\nu$ be the Haar measure on $\mathrm{H}\Bigl(\ce{^{V}_{\Gamma}}q \Bigr)$. Then we have $\nu(\mathrm{E}_\mu) = 1$. By the unique ergodicity of the Haar measure, there exists a set $\hat{\mathrm{E}}_\mu \subset \mathrm{E}_\mu$ such that $\nu (\hat{\mathrm{E}}_\mu) = \mu(\hat{\mathrm{E}}_\mu \times \mathbb{S}_{2 \pi}) = 1$ and $F_{E}^* \Bigl(\ce{^{\ti V}_{\ti \Gamma}}\ti q, \vartheta\Bigr)$ is a constant function on $\hat{\mathrm{E}}_\mu \times \mathbb{S}_{2 \pi}$. It follows from \x{ex-rn2} that the constant must be $\rho(E,\mu)$.

By \x{f-dia}, we see that $\rho(E,\mu)$ in \x{ex-rn2} is independent of the choice of the measure $\mu$. Set $\hat{F}_{E} := F_{E} - \rho(E)$. By Lemma \ref{f-con}, $\hat{F}_E$ is continuous on $\mathrm{Z}$. By \x{ex-rn2}, $\hat{F}_{E}$ satisfies the requirement of Lemma \ref{JM-thm}. Thus, as $k \nearrow +\oo$,
    \be \lb{un-f}
    \lim_{n \to +\oo} \frac{1}{n} \sum_{k=0}^{n-1} \hat{F}_{E} \Bigl(\Phi_{E}^k \Bigl(\ce{^{\ti V}_{\ti \Gamma}}\ti q, \vartheta\Bigr)\Bigr) = \lim_{n \to +\oo} \frac{1}{n} \sum_{k=0}^{n-1} F_{E} \Bigl(\Phi_{E}^k \Bigl(\ce{^{\ti V}_{\ti \Gamma}}\ti q, \vartheta\Bigr)\Bigr) - \rho(E)= 0
    \ee
uniformly for all $\Bigl(\ce{^{\ti V}_{\ti \Gamma}}\ti q, \vartheta\Bigr) \in \mathrm{Z}$.

At last, taking $\ce{^{\ti V}_{\ti \Gamma}}\ti q = \ce{^{V}_{\Gamma}}q$ in \x{un-f}, then by \x{th-di} and Lemma \ref{eqs}, we obtain the existence of the desired limit \x{f-dia}.
\end{proof}

\subsection{Continuity}

The continuity of rotation numbers with respect to the spectral parameter $E$ is crucial in the proof of the gap labeling theorem. We state this result as follows. But the spectrum will be discussed in a future publication. It is worth mentioning that the gap labelling theorem can be established via the Schwartzman homomorphism for operators in $\ell^2(\Z)$; see \cite{DF23}.

\begin{theorem} \lb{rho-con} Let $\ce{^{V}_{\Gamma}}q \in PC_{\delta,m,M,ap}(\R)$ be fixed. Then $\rho(E)$ is continuous with respect to $E \in \R$.
\end{theorem}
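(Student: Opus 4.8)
The plan is to realize $\rho(E)$ as an integral of a continuous observable against a $\Phi^1_E$-invariant measure and then combine continuous dependence of the data on $E$ with a weak-$*$ compactness argument on the compact metric space $\mathrm{Z}$. From the proof of Theorem \ref{main} (in particular \x{ex-rn2} together with Lemma \ref{eqs} and \x{th-di}), one has $\rho(E) = [\Gamma]\,\rho(E,\mu)$ with $\rho(E,\mu) = \int_{\mathrm{Z}} F_E\, d\mu$ for \emph{every} $\Phi^1_E$-invariant Borel probability measure $\mu$ on $\mathrm{Z}$, and $\rho(E,\mu)$ is independent of the choice of $\mu$; here $[\Gamma]$ is the density from Lemma \ref{den-l}, which does not depend on $E$. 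Such a measure exists by the Krylov--Bogoliubov theorem since $\Phi^1_E$ is continuous on the compact $\mathrm{Z}$ (Lemma \ref{phi-ds}). Hence it suffices to prove that $E\mapsto \int_{\mathrm{Z}} F_E\, d\mu_E$ is continuous when, for each $E$, $\mu_E$ is any such invariant measure.

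The key preliminary step is to upgrade Lemma \ref{lm-ar-con} and Lemma \ref{f-con} to \emph{joint} continuity in the spectral parameter: I would show that $(\ce{^{\ti V}_{\ti \Gamma}}\ti q,\vartheta,E)\mapsto F_E\Bigl(\ce{^{\ti V}_{\ti \Gamma}}\ti q,\vartheta\Bigr)$ is continuous on $\mathrm{Z}\times\R$, and therefore so is $(z,E)\mapsto \Phi^1_E(z)$ (the base component $\ce{^{\ti V}_{\ti \Gamma}}\ti q\cdot 1$ of $\Phi^1_E$ carries no $E$, and the fibre component is $\vartheta + F_E \bmod 2\pi$). By compactness of $\mathrm{Z}$ this yields, for each fixed $E_0$, that $\|F_E - F_{E_0}\|_\infty\to 0$ and $\|g\circ\Phi^1_E - g\circ\Phi^1_{E_0}\|_\infty\to 0$ as $E\to E_0$ for every $g\in C(\mathrm{Z})$. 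To establish the joint continuity I would rerun the Gronwall argument of Lemma \ref{lm-ar-con} verbatim, now letting $E$ vary as well: on the interval $(0,\ti x_1)$ the argument $\theta_E$ solves $\theta' = \cos^2\theta - (\ti q(x)-E)\sin^2\theta$, whose right-hand side is globally Lipschitz in $\theta$ with constant $\le 2(1+|E|+\|\ti q\|_\infty)$ --- bounded uniformly for $E$ in a fixed compact set and $\ce{^{\ti V}_{\ti \Gamma}}\ti q\in\mathrm{H}(\ce{^{V}_{\Gamma}}q)$ by Remark \ref{re-pcu-com1} --- and depends affinely, hence continuously, on $E$; since $\ti x_1\in[m,M]$ uniformly, the only new term in the difference estimate is $(E-E_0)\int\sin^2\theta$, which is $O(|E-E_0|)$, so the generalized Gronwall inequality \cite[Lemma 6.2 in Chapter I]{Ha80} produces a bound on $|\theta_E(\ti x_1-;\cdot)-\theta_{E_0}(\ti x_1-;\cdot)|$ that is the one already obtained in Lemma \ref{lm-ar-con} plus a term $O(|E-E_0|)$, uniformly on the hull; the jump at $\ti x_1$ is then absorbed using the continuity of $J$ (Lemma \ref{j-cont}), which involves no $E$-dependence.

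With these tools, fix $E_0\in\R$ and a sequence $E_n\to E_0$, and choose $\Phi^1_{E_n}$-invariant probability measures $\mu_n$ on $\mathrm{Z}$. By weak-$*$ compactness of the space of Borel probability measures on the compact metric space $\mathrm{Z}$, a subsequence (not relabeled) satisfies $\mu_n\rightharpoonup\mu_0$. For any $g\in C(\mathrm{Z})$, using $\Phi^1_{E_n}$-invariance of $\mu_n$ and $\|g\circ\Phi^1_{E_n}-g\circ\Phi^1_{E_0}\|_\infty\to0$,
\[
\int_{\mathrm{Z}} g\circ\Phi^1_{E_0}\, d\mu_0 = \lim_n \int_{\mathrm{Z}} g\circ\Phi^1_{E_n}\, d\mu_n = \lim_n \int_{\mathrm{Z}} g\, d\mu_n = \int_{\mathrm{Z}} g\, d\mu_0,
\]
so $\mu_0$ is $\Phi^1_{E_0}$-invariant. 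Likewise, from $\|F_{E_n}-F_{E_0}\|_\infty\to0$ and $\mu_n\rightharpoonup\mu_0$ we get $\int_{\mathrm{Z}} F_{E_n}\, d\mu_n\to\int_{\mathrm{Z}} F_{E_0}\, d\mu_0$, whence
\[
\rho(E_n) = [\Gamma]\int_{\mathrm{Z}} F_{E_n}\, d\mu_n \longrightarrow [\Gamma]\int_{\mathrm{Z}} F_{E_0}\, d\mu_0 = \rho(E_0),
\]
the last equality because $\mu_0$ is $\Phi^1_{E_0}$-invariant and $\rho(E_0)$ is independent of the chosen invariant measure. Since every subsequence of $(E_n)$ admits a further subsequence along which $\rho(E_n)\to\rho(E_0)$, the whole sequence converges; as $E_0$ was arbitrary, $\rho\in C(\R)$.

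I expect the main obstacle to be the joint-continuity step, i.e., verifying that $\theta_E(\ti x_1-;\cdot)$ depends continuously on $E$ uniformly over the hull in the \emph{weak uniform topology} of Section~\ref{se-apf} rather than in a stronger one; everything else (Krylov--Bogoliubov, weak-$*$ compactness, invariance of the limit measure, continuity of $F_E$ in the phase variables via Lemma \ref{lm-ar-con}) is soft. However, because $J$ carries no $E$-dependence and the uniform bounds of Remark \ref{re-pcu-com1} together with $\ti x_1\in[m,M]$ are available, the Gronwall estimate of Lemma \ref{lm-ar-con} should adapt with only cosmetic changes.
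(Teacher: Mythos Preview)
Your proposal is correct and follows essentially the same strategy as the paper: represent $\rho(E)$ via the integral of $F_E$ against a $\Phi^1_E$-invariant measure, use a Gronwall estimate to get uniform (over the hull) continuity of $F_E$ and $\Phi^1_E$ in $E$, pass to a weak-$*$ limit of invariant measures along a sequence $E_n\to E_0$, verify that the limit is $\Phi^1_{E_0}$-invariant, and conclude. You are in fact slightly more careful than the paper about carrying the factor $[\Gamma]$, which the paper's proof of this theorem silently absorbs when writing $\rho(E_i)=\int_{\mathrm{Z}}F_{E_i}\,d\mu_i$.
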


\Proof It suffices to show that for each sequence $E_i \to E_0 \in \R$, we have
    \be \lb{conrho1}
    \rho(E_i) \to \rho(E_0) \qq \mbox{as~} i \to +\oo.
    \ee
For each $i \in \N_0$, consider the equation
    \begin{equation} \label{conrho2}
    \left\lbrace \begin{array}{ll}
    \theta'(x) = \cos^2 \theta(x) - (q(x)-E_i) \sin^2 \theta(x), \qq & x \in \mathbb{R} \setminus \Gamma, \\
    \theta(x_n+) - \theta(x_n-) = J(v_n,\theta(x_n-)), \qq & x = x_n \in \Gamma.
    \end{array} \right.
    \end{equation}

Using the argument of Theorem \ref{main}, for each equation \x{conrho2}, we introduce the skew-product dynamical system $\Bigl\{\Phi_{E_i}^k\Bigr\}_{k \in \Z}$ on $\mathrm{Z}= \mathrm{H}\Bigl(\ce{^{V}_{\Gamma}}q \Bigr) \times {\mathbb S}_{2\pi}$ that is defined by
    \be \lb{conrho3}
    \Phi^k_{E_i} \Bigl(\ce{^{\ti V}_{\ti \Gamma}}\ti q, \vartheta\Bigr) := \Bigl(\ce{^{\ti V}_{\ti \Gamma}}\ti q \cdot k, \theta_{E_i}\Bigl(\ti x_k; \ce{^{\ti V}_{\ti \Gamma}}\ti q, \Xi\Bigr) \mod 2 \pi \Bigr), \quad \Bigl(\ce{^{\ti V}_{\ti \Gamma}}\ti q, \vartheta\Bigr) \in \mathrm{Z},
    \ee
where $\Xi \in \R$ satisfies $= \Xi \mod 2\pi$. Meanwhile, the observable function $F_{E_i}$ from $\mathrm{Z}$ to $\R$ is defined by
    \[
    F_{E_i} \Bigl(\ce{^{\ti V}_{\ti \Gamma}}\ti q, \vartheta \Bigr) := \theta_{E_i}\Bigl(\ti{x}_1; \ce{^{\ti V}_{\ti \Gamma}}\ti q, \Xi\Bigr) - \Xi, \quad \Bigl(\ce{^{\ti V}_{\ti \Gamma}}\ti q, \vartheta\Bigr) \in \mathrm{Z}.
    \]
Then by \x{ex-rn2}, we have
    \be \lb{conrho4}
    \rho(E_i)=\int_{\mathrm{Z}} F_{E_i} \rd \mu_i \qq \mbox{for~all~} i \in \N_0,
    \ee
where $\mu_i$ is an invariant Borel probability measure of $\Bigl\{\Phi_{E_i}^k\Bigr\}_{k \in \Z}$ on $\mathrm{Z}$. Since $\mathrm{Z}$ is a compact metric space, by \cite[Theorem 6.5]{Wa82}, we may assume that there exists a Borel probability measure on $\mathrm{Z}$ denoted by $\mu_{\oo}$ such that $\mu_i\rightharpoonup \mu_{\oo}$ in the weak$^\star$ topology.

We assert that $\mu_{\oo}$ is an invariant Borel probability measure of $\Bigl\{\Phi_{E_0}^k\Bigr\}_{k \in \Z}$ on $\mathrm{Z}$. By \cite[Theorem 6.8]{Wa82}, it suffices to show that for each $f \in C(\mathrm{Z},\R)$ (which denotes the space of all continuous functions from $\mathrm{Z}$ to $\R$), we have
    \be \lb{conrho5}
    \int_\mathrm{Z} f \rd \mu_{\oo}= \int_\mathrm{Z} f \circ \Phi_{E_0}^1 \rd \mu_{\oo}.
    \ee
To this end, for any $f \in C(\mathrm{Z},\R)$, denote $g_i:=f \circ \Phi_{E_i}^1, \ i \in \N_0$. By Lemma \ref{phi-ds}, $g_i \in C(\mathrm{Z},\R)$. It follows from \x{conrho2} and the generalized Gronwall inequality \cite[Lemma 6.2 in Chapter I]{Ha80} that $\theta_{E}\Bigl(\ti x_1; \ce{^{\ti V}_{\ti \Gamma}}\ti q, \Xi\Bigr)$ is Lipschitz continuous with respect to $E$. Combining this with the uniform continuity of $f$, we have
    \be \lb{conrho6}
    \lim_{i\to +\infty}\| g_i-g_0\|_{\infty}=0.
    \ee
Since $\mu_i$ is an invariant Borel probability measure of $\Bigl\{\Phi_{E_i}^k\Bigr\}_{k \in \Z}$, we then have
    \[
    \int_\mathrm{Z} f \rd \mu_i= \int_\mathrm{Z} f \circ \Phi_{E_i}^1 \rd \mu_i =\int_\mathrm{Z} g_i \rd \mu_i.
    \]
Taking $i \nearrow +\oo$, by \x{conrho6}, the weak$^\star$ convergence of $\mu_i$ and \cite[Lemma 3.9]{Zh15}, we obtain the desired result \x{conrho5}.

Then again by \x{conrho2} and the generalized Gronwall inequality \cite[Lemma 6.2 in Chapter I]{Ha80}, we may infer that $F_{E} \Bigl(\ce{^{\ti V}_{\ti \Gamma}}\ti q, \vartheta \Bigr)$ is Lipschitz continuous with respect to $E$ as well. By \cite[Lemma 3.9]{Zh15} and \x{conrho4}, we have
    \[
    \lim_{i \to +\infty}\rho(E_i)=\int_\mathrm{Z} F_{E_0} \rd \mu_{\oo}.
    \]
Since $\mu_{\oo}$ is an invariant Borel probability measure of $\Bigl\{\Phi_{E_0}^k\Bigr\}_{k \in \Z}$, we know from \x{conrho4} that
    \[
    \rho(E_0)=\int_\mathrm{Z} F_{E_0} \rd \mu_{\oo}.
    \]
The desired result \x{conrho1} is proved.
\qed

\begin{remark}
{\rm The traditional way of establishing the continuity of the rotation number as a function of $E$ is via spectral theory. Specifically, one uses oscillation theory to connect the rotation number and the integrated density of states. The latter quantity is continuous in $E$ because of the finite-dimensionality of the solution space. We refer the reader to the recent survey \cite{DF23}, where the traditional way is discussed in detail for operators in $\ell^2(\Z)$, which are technically easier to handle. The proof of Theorem~\ref{rho-con} given above is more direct and the discussion takes place entirely on the dynamics side of this correspondence.}
\end{remark}

\medskip
%
%
%
%
%
\section*{Acknowledgments}
We would like to thank Prof. Jialin Hong for helpful discussions. We also thank the editor and anonymous referees for suggestions.

\end{document}